\newcommand{\bu}{\boldsymbol u}
\newcommand{\Om}{\Omega}
\newcommand{\bv}{\boldsymbol v}
\newcommand{\bw}{\boldsymbol w}
\newcommand{\btau}{\boldsymbol \tau}
\newcommand{\bpsi}{\boldsymbol \psi}
\newcommand{\be}{\boldsymbol e}
\newcommand{\bvar}{\boldsymbol \varphi}
\newcommand{\bs}{\boldsymbol s}
\newcommand{\bff}{\boldsymbol f}
\newtheorem{Theorem}{Theorem}[section]
\newtheorem{lema}[Theorem]{Lemma}
\newtheorem{remark}[Theorem]{Remark}
\newtheorem{Proof}{{\em Proof:}}
\newenvironment{proof}{\begin{Proof}\rm}{\hfill $\Box$ \end{Proof}}
\title{Error analysis of proper orthogonal decomposition data assimilation schemes for the Navier-Stokes equations}
\author{ Bosco
Garc\'{i}a-Archilla\thanks{Departamento de Matem\'atica Aplicada
II, Universidad de Sevilla, Spain. Research is supported by
Spanish MCINYU under grant PGC2018-096265-B-I00 (bosco@esi.us.es).}
  \and Julia Novo\thanks{Departamento de
Matem\'aticas, Universidad Aut\'onoma de Madrid, Spain.  Research is supported
by Spanish MINECO
under grant MTM2016-78995-P (AEI/FEDER, UE) (julia.novo@uam.es).}
\and Samuele Rubino\thanks{Department EDAN \&IMUS, Universidad de Sevilla, Spain. Research is supported by Spanish MCINYU under grant RTI2018-093521-B-C31 and Spanish State Research Agency through the national programme Juan de la Cierva-Incorporaci\'on 2017. (samuele@us.es).}}
\begin{document}

\maketitle

\begin{abstract}
The error analysis of a proper orthogonal decomposition (POD) data assimilation (DA) scheme for the Navier-Stokes equations is carried out. A grad-div stabilization term is added to the formulation of the POD method. Error bounds with constants independent on inverse powers of the viscosity parameter are derived for the POD algorithm.
No upper bounds in the nudging parameter of the data assimilation method are required.
Numerical experiments show that, for large values of the nudging parameter, the proposed method rapidly converges to the real solution, and greatly improves the overall accuracy of standard POD schemes up to low viscosities over predictive time intervals.
\end{abstract}

\noindent{\bf AMS subject classifications.} 35Q30,  65M12, 65M15, 65M20, 65M60, 65M70,\\ 76B75. \\
\noindent{\bf Keywords.} Data assimilation, downscaling, Navier-Stokes equations, uniform-in-time error estimates, proper orthogonal decomposition, fully discrete schemes, mixed finite elements methods.

\section{Introduction}

Reduced order models (ROM) are a fairly extensive technique applied in many different fields to reduce the computational cost of direct
numerical simulations while keeping enough accurate numerical approximations. Proper Orthogonal Decomposition (POD) method provides the  elements (modes) of the reduced basis from a given database (snapshots) which are computed  by means of a direct or full order method.

Data assimilation refers to a class of techniques that combine experimental data and simulations in order to obtain better predictions in a physical system.
There is a vast literature on data assimilation methods
(see e.g., \cite{Asch_et_al_2016}, \cite{Daley_1993}, \cite{Kalnay_2003}, \cite{Law_Stuart_Zygalakis}, \cite{Reich_Cotter_2015}, and the references
therein). One of these techniques is  nudging in which a penalty term is added with the aim of driving the approximate solution towards coarse mesh observations of the data. In \cite{Az_Ol_Ti}, a new approach, known as continuous data assimilation,  is introduced for a large class of dissipative partial differential equations.

In this paper we study the numerical approximation of the Navier-Stokes equations with a continuous data assimilation method defined over a reduced order space. The basis functions in the ROM are based only on velocity approximations at different times computed with a mixed finite element Galerkin method using inf-sup stable elements. Both the snapshots and the basis of the ROM satisfy a discrete divergence-free condition.

We consider the Navier-Stokes equations (NSE)
\begin{align}
\label{NS} \partial_t\bu -\nu \Delta \bu + (\bu\cdot\nabla)\bu + \nabla p &= \bff &&\text{in }\ (0,T]\times\Omega,\nonumber\\
\nabla \cdot \bu &=0&&\text{in }\ (0,T]\times\Omega,
\end{align}
in a bounded domain $\Omega \subset {\mathbb R}^d$, $d \in \{2,3\}$ with initial condition $\bu(0)=\bu^0$. In~\eqref{NS},
$\bu$ is the velocity field, $p$ the kinematic pressure, $\nu>0$ the kinematic viscosity coefficient,
 and $\bff$ represents the accelerations due to external body forces acting
on the fluid. The Navier-Stokes equations \eqref{NS} must be complemented with boundary conditions. For simplicity,
we only consider homogeneous
Dirichlet boundary conditions $\bu = \boldsymbol 0$ on $\partial \Omega$.

As in \cite{Mondaini_Titi} we consider given coarse spatial mesh measurements, correspon\-ding to a solution $\bu$ of \eqref{NS}, observed at a coarse spatial mesh. We assume that the measurements are continuous in time and error-free and we denote by $I_H(\bu)$ the operator used for interpolating these
measurements, where $H$ denotes the resolution of the coarse spatial mesh.
Since no initial condition for $\bu$ is available one cannot  simulate equation \eqref{NS} directly.
To overcome this difficulty it was suggested in~\cite{Az_Ol_Ti} to consider instead a solution~$\bv$ of the following system
\begin{eqnarray}\label{eq:mod_NS}
 \partial_t\bv -\nu \Delta \bv + (\bv\cdot\nabla)\bv + \nabla \tilde p&=&\bff -\beta(I_H(\bv)-I_H(\bu)),\ \text{in }\ (0,T]\times\Omega,\nonumber\\
\nabla \cdot \bv&=&0, \ \text{in }\ (0,T]\times\Omega,
\end{eqnarray}
where $\beta$ is the nudging parameter.
In \cite{Mondaini_Titi} a semidiscrete postprocessed Galerkin spectral method in considered and analyzed. A fully discrete method for the
spatial discretization in \cite{Mondaini_Titi} is analyzed in \cite{Ibdah_et_al}.
In \cite{bosco_titi_yo} the continuous data assimilation algorithm is analyzed considering both a finite element Galerkin method and a Galerkin method with grad-div stabilization. The extension to the fully discrete case is carried out in \cite{bosco_y_yo}. For the Galerkin method with grad-div stabilization the constants in the error bounds in \cite{bosco_y_yo} and \cite{bosco_titi_yo} are independent on inverse powers of the viscosity parameter. In \cite{larios_et_al} the authors consider also fully discrete approximations to \eqref{eq:mod_NS} in which for the spatial discretization the Galerkin method with grad-div stabilization is considered. However, the constants in the error bounds in \cite{larios_et_al} are not independent on inverse powers of $\nu$. Moreover, in \cite{bosco_y_yo}, \cite{bosco_titi_yo} there is no need to impose an upper bound on the nudging parameter $\beta$ as required in \cite{Ibdah_et_al}, \cite{larios_et_al}, \cite{Mondaini_Titi}. This fact is important because, on the one hand, there is numerical evidence that no upper bound is required in the numerical experiments and, on the other hand, better results are obtained in some experiments for values of $\beta$ above the upper bound assumed in references \cite{Ibdah_et_al}, \cite{larios_et_al}, \cite{Mondaini_Titi}.

In \cite{zerfas_et_al} a continuous data assimilation reduced order model (DA-ROM) me\-thod is introduced and analyzed. The idea is to consider a Galerkin approximation to \eqref{eq:mod_NS} defined in a ROM space. The ROM space is based on a set of snapshots that are fully discrete Galerkin inf-sup stable mixed finite element approximations to \eqref{NS} at different time steps. The DA-ROM method in \cite{zerfas_et_al} is a Galerkin method without any kind of stabilization. The implicit Euler method is used as time integrator and error bounds are proved that converge exponentially fast in time to the true solution. The constants in the error bounds in \cite{zerfas_et_al} depend on inverse powers of the viscosity parameter.

In the present paper, we follow \cite{zerfas_et_al} and consider almost the same DA-ROM with the difference that we add grad-div stabilization. We will call the model grad-div-DA-ROM. We make some improvements compared with the error analy\-sis in \cite{zerfas_et_al}. First of all, we prove error bounds in which the constants do not depend on inverse powers of the viscosity. This fact is important in many applications with large Reynolds numbers. A second difference with respect to \cite{zerfas_et_al} is the following. In \cite{zerfas_et_al} the correlation matrix is based on the inner products of the snapshots without dividing by the number of snapshots as it is standard (see \cite{kunisch}). The reason for not dividing by the number of snapshots is that proceeding in that way one can bound the maximum in time of the $L^2$ error between the true solution and the projection onto the ROM space instead of having a bound for a discrete primitive in time of the $L^2$ error (let say the mean error, see  \cite{kunisch} again). Although an available bound for the maximum norm of the error in the projection simplifies the error analysis, one obtains for the correlation matrix not divided by the number of snapshots that the size of the eigenvalues scales exactly with the number of snapshots. This means that not dividing by the number of snapshots, say $M$ where $M$ is typically $(\Delta t)^{-1}$, $\Delta t$ being the
time step, we get eigenvalues $M$ times larger than u\-sing the standard correlation matrix, which in practice implies that the error bounds are multiplied by $M$ (say $(\Delta t)^{-1}$). As a consequence, there is no gain using the correlation matrix considered in \cite{zerfas_et_al}. In the present paper, we use the standard correlation matrix as defined in \cite{kunisch} and we get error bounds for the error between the  grad-div-DA-ROM and the orthogonal $L^2$ projection of the true solution onto the ROM space in which we apply the available bound for the mean error instead of requiring a bound for the maximum error. The last improvement respect to \cite{zerfas_et_al} is related to the nudging parameter. In the numerical experiments in  \cite{zerfas_et_al} there is evidence that using a large value for $\beta$ (say $\beta=100, 500$) makes a significant difference between the DA-ROM and the standard ROM, the first one being much more accurate. Although in
\cite[Remark 3.8]{zerfas_et_al} it is stated that with the analysis presented the usual upper bound on the nudging parameter can be relaxed
or even eliminated this is not true.
Actually, we found some mistakes in the statement of the main Theorem in \cite{zerfas_et_al}, Theorem 3.5. More precisely, constants $\alpha_1$,
$\alpha_2$ are defined in the following way
\begin{eqnarray}\label{alpha}
\alpha_1&:=&\nu-2\mu(\beta_2-1)C_I^2 H^2,\nonumber\\
\alpha_2&:=&2\mu-\frac{\mu C_I^2}{2\beta_1}-\frac{\mu}{2\beta_2}-6\nu^{-1}C_b^2\|{\cal S}_r\|_2\|\nabla \bu^{n+1}\|^2.
\end{eqnarray}
In  \eqref{alpha}, the value of $\mu$ is $\beta$, i.e. $\mu$ is the nudging paremeter  in \eqref{eq:mod_NS}, $H$ is the coarse mesh in \eqref{eq:mod_NS}, $n$ is the
time level, $C_I$ is a constant related to the interpolant operator $I_H$ and $C_b$ is a constant related to a standard bound of the nonlinear term. In \cite[Theorem 3.5]{zerfas_et_al} it is assumed that $\alpha_i>0$, $\beta_i>0$, $i=1,2$. Following the error analysis in \cite{zerfas_et_al} we found that the correct value for the constant $\alpha_2$ in \eqref{alpha} should be
$$
\alpha_2:=2\mu-\frac{\mu C_I^2}{\beta_1}-\frac{2\mu}{\beta_2}-6\nu^{-1}C_b^2\|{\cal S}_r\|_2\|\nabla \bu^{n+1}\|^2,
$$
while $\beta_2$ must be larger than 1. Then, in view of the assumption $\alpha_1>0$ we fall essentially into the upper bound $\nu-2\mu C_I^2 H^2>0$ assumed in references \cite{Ibdah_et_al}, \cite{larios_et_al}, \cite{Mondaini_Titi}, which means that the upper bound cannot be removed.
On the other hand, if we want to relax condition $\nu-2\mu C_I^2 H^2>0$ we can take $\beta_2=1+\epsilon$ with $\epsilon\rightarrow 0$ but in that case in view of the correct value of $\alpha_2$ we would need to take $\beta_1>(1+\epsilon)C_I^2/(2\epsilon)$, which increases as $\epsilon$ goes to zero. Since the factor $\beta_1 \mu$ multiplies the constant in the error bound of Theorem 3.5, relaxing the upper bound in the nudging parameter results in increasing the size of the constants in the error bounds.

In the present paper, as in \cite{bosco_y_yo}, \cite{bosco_titi_yo}, we do not need to assume an upper bound on the nudging parameter.
For the time integration we use the implicit Euler method although the error analysis for a second order time integrator as BDF2 can be carried out as in \cite{bosco_y_yo}.
We prove error bounds for the method with constants independent on inverse powers of the viscosity. As in  \cite{zerfas_et_al} and
previous references the error
in the initial condition goes to zero exponentially fast. The error in the grad-div-DA-ROM has three components, one coming from the time integrator used,
one due to the error in the snapshots (finite element error) and a third one due to the POD method, measured in terms on the eigenvalues of the correlation matrix. Numerical experiments confirm that, for large values of the nudging parameter, the proposed grad-div-DA-ROM rapidly converges to the real solution, and greatly improves the overall accuracy of standard POD schemes up to low viscosities over predictive time intervals, similarly to the DA-ROM in \cite{zerfas_et_al}.

The outline of the paper is as follows. In Section \ref{sec:FOM} we state some prelimina\-ries and notation. In Section \ref{sec:POD} we recall the POD method and get some a priori bounds for the orthogonal projection of the true solution onto the POD space. In Section \ref{sec:DA} we describe the proposed grad-div-DA-ROM and bound the error. Section \ref{sec:num} is devoted to show some numerical experiments. Finally, Section \ref{sec:Concl} presents the main conclusions of this work.

\section{Preliminaries and notation}\label{sec:FOM}

Let us denote by $Q=L_0^2(\Omega)=\left\{q\in L^2(\Omega)\mid (q,1)=0\right\}$.
Let $\mathcal{T}_{h}=(\tau_j^h,\phi_{j}^{h})_{j \in J_{h}}$, $h>0$ be a family of partitions of suitable domains $\Omega_h$, where $h$ denotes the maximum diameter of the elements $\tau_j^h\in \mathcal{T}_{h}$, and $\phi_j^h$ are the mappings from the reference simplex $\tau_0$ onto $\tau_j^h$.
We shall assume that the partitions are shape-regular and quasi-uniform. Let $r \geq 2$, we consider the finite-element spaces
\begin{eqnarray*}
S_{h,r}&=&\left\{ \chi_{h} \in \mathcal{C}\left(\overline{\Om}_{h}\right) \,  \big|
\, {\chi_{h}}_{|{\tau_{j}^{h}}}
\circ \phi^{h}_{j} \, \in \, P^{r-1}(\tau_{0})  \right\} \subset H^{1}(\Om_{h}),
\nonumber\\
{S}_{h,r}^0&=& S_{h,r}\cap H^{1}_{0}(\Om_{h}),
\end{eqnarray*}
where $P^{r-1}(\tau_{0})$ denotes the space of polynomials of degree at most $r-1$ on $\tau_{0}$.

We shall denote by $(X_{h,r}, Q_{h,r-1})$ the MFE pair known as Hood--Taylor elements \cite{BF,hood0} when $r\ge 3$, where
\begin{eqnarray*}
X_{h,r}=\left({S}_{h,r}^0\right)^{d},\quad
Q_{h,r-1}=S_{h,r-1}\cap L_0^2(\Omega_h),\quad r
\ge 3.
\end{eqnarray*}
To approximate the velocity
we consider the discrete divergence-free space
\begin{eqnarray*}
V_{h,r}=X_{h,r}\cap \left\{ \chi_{h} \in H^{1}_{0}(\Om_{h})^d \mid
(q_{h}, \nabla\cdot\chi_{h}) =0  \quad\forall q_{h} \in Q_{h,r-1}
\right\}.
\end{eqnarray*}
For $n\ge 1$  we define the fully discrete  Galerkin approximation with the BDF2 time discretization  $(\bu_h^{n},p_h^{n})\in X_{h,r}\times Q_{h,r-1}$ satisfying
for all $(\bvar_h,\psi_h)\in X_{h,r}\times Q_{h,r-1}$
\begin{align}\label{eq:gal}
\left(\frac{3\bu_h^{n}-4\bu_h^{n-1}+u_h^{n-2}}{2\Delta t},\bvar_h\right)
+\nu(\nabla \bu_h^{n},\nabla \bvar_h)+b_h(\bu_h^{n},\bu_h^{n},\bvar_h)
+(\nabla & p_h^{n},\bvar_h)\nonumber\\
&=(\bff^{n},\bvar_h),\nonumber\\
(\nabla \cdot \bu_h^{n},\psi_h)&=0.
\end{align}
In \eqref{eq:gal} $\bu_h^n$ is the Galerkin approximation at time $t_n$,  $\Delta t$
is the time step and $b_h(\cdot,\cdot,\cdot)$  is defined in the following way
$$
b_{h}(\bu_{h},\bv_{h},\bvar_{h}) =((\bu_{h}\cdot \nabla ) \bv_{h}, \bvar_{h})+ \frac{1}{2}( \nabla \cdot (\bu_{h})\bv_{h},\bvar_{h}),
\quad \, \forall \, \bu_{h}, \bv_{h}, \bvar_{h} \in X_{h,r}.
$$
It is straightforward to verify that $b_h$ enjoys the skew-symmetry property
\begin{equation}\label{skew}
b_h(\bu,\bv,\bw)=-b_h(\bu,\bw,\bv) \qquad \forall \, \bu, \bv, \bw\in H_0^1(\Omega)^d.
\end{equation}
Let us fix $T>0$ and define $M=T/\Delta t$. For the fully discrete Galerkin approxi\-mation the following bounds hold, see for example
\cite{Bosco_Julia_Javier_fully_post}:
\begin{eqnarray}\label{eq:cota_gal}
\|\bu^n-\bu_h^n\|_0&\le& C(\bu,p,\nu,r) (h^r+(\Delta t)^2 ),\quad 1\le n\le M\nonumber\\
\|\bu^n-\bu_h^n\|_1&\le& C(\bu,p,\nu,r) (h^{r-1}+(\Delta t)^2 ),\quad 1\le n\le M.
\end{eqnarray}
\begin{remark} \label{re:esta}
If we use a stabilized method instead of the Galerkin one we can get bounds with constants independent on inverse powers of $\nu$.
For the error analysis we carry out in this paper we need to have velocity approximations with discrete divergence zero.  Then, we could
start from a Galerkin method with grad-div stabilization as proposed in \cite{NS_grad_div}. A fully discrete version of the Galerkin method
with grad-div stabilization and the implicit Euler method is analyzed in \cite{NS_grad_div} resulting in the following bounds:
\begin{eqnarray}\label{eq:sta}
\|\bu^n-\bu_h^n\|_0+h\|\bu^n-\bu_h^n\|_1\le C(\bu,p,r)(h^{r-1}+\Delta t),\quad 1\le n\le M,
\end{eqnarray}
where the constant $C(\bu,p,r)$ depends on norms of the true solution but not directly on inverse powers of the viscosity parameter $\nu$.
Comparing the error bound \eqref{eq:sta} with \eqref{eq:cota_gal} we can observe that instead of rate $r$ in terms of $h$ a rate of convergence $r-1$ is proved. The numerical experiments in \cite{nos_review} show that this rate is sharp for small values of the viscosity parameter $\nu$.
\end{remark}

If the family of
meshes is quasi-uniform then  the following inverse
inequality holds for each $\bv_{h} \in S_{h,r}$, see e.g., \cite[Theorem 3.2.6]{Cia78},
\begin{equation}
\label{inv} \| \bv_{h} \|_{W^{m,p}(K)} \leq c_{\mathrm{inv}}
h_K^{n-m-d\left(\frac{1}{q}-\frac{1}{p}\right)}
\|\bv_{h}\|_{W^{n,q}(K)},
\end{equation}
where $0\leq n \leq m \leq 1$, $1\leq q \leq p \leq \infty$, and $h_K$
%is the size (diameter) of the mesh cell $K \in \mathcal T_h$.
is the diameter of~$K \in \mathcal T_h$.

We consider a modified Stokes projection that was introduced in \cite{grad-div1} and that we denote by $\bs_h^m:V\rightarrow V_{h,r}$
satisfying
\begin{eqnarray}\label{stokespro_mod_def}
%\nu(\nabla \bs_h^m,\nabla \bvar_h)=(\bff-\bu_{t}-(\bu\cdot \nabla )\bu-\nabla p , \bvar_{h}), \quad \forall \, \,
% \bvar_{h} \in V_{h,r},
(\nabla \bs_h^m,\nabla \bvar_h)=(\nabla \bu,\nabla \bvar_h),\quad \forall \, \,
\bvar_{h} \in V_{h,r},
\end{eqnarray}
and the following error bound, see \cite{grad-div1}:
\begin{equation}
\|\bu-\bs_h^m\|_0+h\|\bu-\bs_h^m\|_1\le C\|\bu\|_j h^j,\qquad
1\le j\le r.
\label{stokespro_mod}
\end{equation}
From \cite{chenSiam}, we also have
\begin{align}
\|\nabla \bs_h^m\|_\infty\le C\|\nabla \bu\|_\infty \label{cotainfty1},
\end{align}
where $C$ does not depend on $\nu$ and~\cite[Lemma~3.8]{bosco_titi_yo}
\begin{align}
\label{cota_sh_inf_mu}
\|\bs_h^m\|_\infty  & \le C(\|\bu\|_{d-2}\|\bu\|_2)^{1/2},
\\
\label{la_cota_mu}
\|\nabla\bs_h^m\|_{L^{2d/(d-1)}} & \le
 C\bigl(\|\bu\|_1\|\bu\|_2\bigr)^{1/2},
\end{align}
where the constant~$C$ is independent of~$\nu$.

Let us denote by $P_Q$ the $L^2$ orthogonal projection onto $Q_{h,r-1}$. It holds
\begin{eqnarray}\label{eq:L2pre}
\|p-P_Q p\|_0\le C h^{r-1}\|p\|_{r-1},\quad p\in Q\cap H^{r-1}(\Omega).
\end{eqnarray}
We will also use the well-known
property, see \cite[Lemma 3.179]{Volker_libro}
\begin{equation}
\label{eq:div}
\left\| \nabla\cdot \bv\right\|_0\le \left\|\nabla \bv\right\|_0,\quad \bv \in
H^1_0(\Omega)^d.
\end{equation}
We will assume that the interpolation operator $I_H$ is stable in $L^2$, that is,
\begin{eqnarray}\label{eq:L^2inter}
\|I_H \bu\|_0\le c_0\|\bu\|_0,\quad \forall \bu\in L^2(\Omega)^d,
\end{eqnarray}
and that it satisfies the following approximation property,
\begin{eqnarray}\label{eq:cotainter}
\|\bu-I_H\bu\|_0\le c_I H\|\nabla \bu\|_0,\quad \forall \bu\in H_0^1(\Omega)^d.
\end{eqnarray}
The Bernardi--Girault~\cite{Ber_Gir}, Girault--Lions~\cite{Girault-Lions-2001}, or the Scott--Zhang~\cite{Scott-Z} interpolation operators
satisfy
%the above properties in~
\eqref{eq:L^2inter} and~\eqref{eq:cotainter}. Notice that the interpolation can be
on
piecewise constants.

\section{Proper Orthogonal decomposition}\label{sec:POD}

We will consider a proper orthogonal decomposition (POD) method.
Let us fix $T>0$ and $M>0$ and take $\Delta t=T/M$ and let us consider the following space
$$
{\cal V}=<\bu_h^1,\ldots,\bu_h^M>.
$$
Let $d_p$ be the dimension of the space $\cal V$.

Let $K$ be the correlation matrix corresponding to the snapshots $K=((k_{i,j}))\in {\mathbb R}^{M\times M}$ where
$$
k_{i,j}=\frac{1}{M}(\bu_h^i,\bu_h^j),
$$
and $(\cdot,\cdot)$ is the inner product in $L^2(\Omega)^d$. Following \cite{kunisch} we denote by
$ \lambda_1\ge  \lambda_2\ge\ldots\ge \lambda_{d_p}>0$ the positive eigenvalues of $K$ and by
$\bv_1,\ldots,\bv_{d_p}\in {\mathbb R}^{M}$ the associated eigenvectors. Then, the (orthonormal) POD basis is given by
\begin{eqnarray}\label{lachi}
\bpsi_k=\frac{1}{\sqrt{M}}\frac{1}{\sqrt{\lambda_k}}\sum_{j=1}^M v_k^j \bu_h(\cdot,t_j),
\end{eqnarray}
where $v_k^j$ is the $j$-th component of the eigenvector $\bv_k$ and the following error formula holds, see \cite[Proposition~1]{kunisch}
\begin{equation}\label{eq:cota_pod_0}
\frac{1}{M}\sum_{j=1}^M\left\|\bu_h^j-\sum_{k=1}^l(\bu_h^j,\bpsi_k)\bpsi_k\right\|_{0}^2=\sum_{k=l+1}^{d_p}\lambda_k,
\end{equation}
where we have used the notation $\bu_h^j=\bu_h(\cdot,t_j)$.

Denoting by $S$ the stiffness matrix for the POD basis $S=((s_{i,j}))\in {\mathbb R}^{d_p\times d_p}$ with
$s_{i,j}=(\nabla \bpsi_i,\nabla \bpsi_j)$ then for any $\bv \in {\cal V}$ the following inverse inequality holds, see \cite[Lemma 2]{kunisch}
\begin{equation}\label{eq:inv_S}
||\nabla \bv ||_0\le \sqrt{\|S\|_2}\|\bv\|_0,
\end{equation}
where $\|S\|_2$ denotes the spectral norm of $S$.

From this inverse inequality we get
\begin{eqnarray}\label{inv_1}
&&\frac{1}{M}\sum_{j=1}^M\left\|\nabla \bu_h^j-\sum_{k=1}^l(\bu_h^j,\bpsi_k)\nabla \bpsi_k\right\|_0^2\nonumber\\
&&\quad\le \frac{\|S\|_2}{M}\sum_{j=1}^M\left\|\bu_h^j-\sum_{k=1}^l(\bu_h^j,\bpsi_k)\bpsi_k\right\|_0^2\le \|S\|_2\sum_{k=l+1}^{d_p} \lambda_k.
\end{eqnarray}
Instead of \eqref{inv_1} we can also apply the following result that is taken from \cite[Lemma 3.2]{iliescu}
\begin{eqnarray}\label{inv_2}
\frac{1}{M}\sum_{j=1}^M\left\|\nabla\bu_h^j-\sum_{k=1}^l(\bu_h^j,\bpsi_k)\nabla\bpsi_k\right\|_0^2=
\sum_{k=l+1}^{d_p}\lambda_k \|\nabla\bpsi_k\|_0^2.
\end{eqnarray}
In the sequel we will denote by
$$
{\cal V}^l=<\bpsi_1,\bpsi_2,\ldots,\bpsi_l>,
$$
and by $P_l$ the $L^2$-orthogonal projection onto ${\cal V}^l$.

Although the proof of the following lemma can be found in \cite[Lemma 3.3]{iliescu} we include it here for convenience of the readers.
\begin{lema}\label{lema_iliescu}
Let $\bu$ be the solution of \eqref{NS} with initial condition $\bu^0$ and let us denote by $\bu^j=\bu(\cdot,t_j)$, then the following bounds hold
\begin{eqnarray}\label{eq:cota_pro_u0}
\frac{1}{M}\sum_{j=1}^M\|\bu^j-P_l \bu^j\|_0^2&\le& C_{0,P}:=2C(\bu,p,\nu,r)(h^{2r}+(\Delta t)^4)+2\sum_{k=l+1}^{d_p}\lambda_k,\nonumber\\
\label{eq:cota_pro_u}
\frac{1}{M}\sum_{j=1}^M\|\nabla(\bu^j-P_l \bu^j)\|_0^2&\le& C_{1,P}:= 3 C(\bu,p,\nu,r)(h^{2(r-1)}+(\Delta t)^4)\\
&&\ +3\sum_{k=l+1}^{d_p}\lambda_k\|\nabla\bpsi_k\|_0^2+3C(\bu,p,\nu,r)\|S\|_2(h^{2r}+(\Delta t)^4).\nonumber
\end{eqnarray}
\end{lema}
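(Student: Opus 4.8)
The plan is to estimate the projection error by inserting the fully discrete Galerkin approximation $\bu_h^j$ as an intermediate term, since the POD basis is built from precisely those snapshots. Concretely, I would write
$\bu^j-P_l\bu^j = (\bu^j-\bu_h^j) - P_l(\bu^j-\bu_h^j) + (\bu_h^j - P_l\bu_h^j)$
and note that $P_l$ is a contraction in $L^2$, so $\|P_l(\bu^j-\bu_h^j)\|_0\le\|\bu^j-\bu_h^j\|_0$. Averaging over $j=1,\dots,M$ and using $(a+b+c)^2\le 2(a+b)^2+2c^2\le\ldots$ (more precisely $\|x-P_lx\|_0^2\le 2\|\bu^j-\bu_h^j\|_0^2\cdot 2 + \ldots$, keeping the constants $2$ resp. $3$ as in the statement), the first two groups contribute $\frac{4}{M}\sum_j\|\bu^j-\bu_h^j\|_0^2$, which by the finite element bound \eqref{eq:cota_gal} is $\le C(\bu,p,\nu,r)(h^{2r}+(\Delta t)^4)$; the last group is exactly $\frac1M\sum_j\|\bu_h^j-P_l\bu_h^j\|_0^2$, and since $P_l\bu_h^j=\sum_{k=1}^l(\bu_h^j,\bpsi_k)\bpsi_k$ is the orthogonal projection onto ${\cal V}^l$, the POD error formula \eqref{eq:cota_pod_0} identifies this with $\sum_{k=l+1}^{d_p}\lambda_k$. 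This yields the first inequality, with the constant $C_{0,P}$ as stated.

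For the gradient bound I would proceed analogously, splitting
$\nabla(\bu^j-P_l\bu^j)=\nabla(\bu^j-\bu_h^j) - \nabla P_l(\bu^j-\bu_h^j) + \nabla(\bu_h^j-P_l\bu_h^j)$,
so that after averaging and using $\|x\|_0^2\le 3\|\cdot\|_0^2+3\|\cdot\|_0^2+3\|\cdot\|_0^2$ the three terms are handled separately. The term $\frac3M\sum_j\|\nabla(\bu^j-\bu_h^j)\|_0^2$ is controlled by the $H^1$ finite element estimate in \eqref{eq:cota_gal}, giving $3C(\bu,p,\nu,r)(h^{2(r-1)}+(\Delta t)^4)$. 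The middle term $\frac3M\sum_j\|\nabla P_l(\bu^j-\bu_h^j)\|_0^2$ is the one requiring care: since $P_l(\bu^j-\bu_h^j)\in{\cal V}^l\subset{\cal V}$, the POD inverse inequality \eqref{eq:inv_S} applies, bounding it by $\|S\|_2\,\frac3M\sum_j\|P_l(\bu^j-\bu_h^j)\|_0^2\le \|S\|_2\,\frac3M\sum_j\|\bu^j-\bu_h^j\|_0^2\le 3C(\bu,p,\nu,r)\|S\|_2(h^{2r}+(\Delta t)^4)$, which is precisely the last term of $C_{1,P}$. The remaining term $\frac3M\sum_j\|\nabla(\bu_h^j-P_l\bu_h^j)\|_0^2$ equals $3\sum_{k=l+1}^{d_p}\lambda_k\|\nabla\bpsi_k\|_0^2$ by the identity \eqref{inv_2} (equivalently one could use \eqref{inv_1} and absorb it differently, but \eqref{inv_2} gives exactly the stated form). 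Collecting the three contributions gives the second inequality with constant $C_{1,P}$.

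The only genuinely delicate point is making sure the $\|S\|_2$ factor ends up multiplying only the finite element term $h^{2r}+(\Delta t)^4$ and not the eigenvalue sum; this is automatic in the decomposition above because the inverse inequality is applied to $P_l(\bu^j-\bu_h^j)$, whose $L^2$ norm is controlled purely by the FE error, whereas the snapshot piece $\bu_h^j-P_l\bu_h^j$ is handled directly by \eqref{inv_2} without invoking \eqref{eq:inv_S}. Apart from that, the proof is just the triangle inequality together with the $L^2$-stability of $P_l$ and the three cited POD identities, so there is no substantial obstacle. One should only be slightly careful about the constants: using $\|a+b+c\|_0^2\le 3(\|a\|_0^2+\|b\|_0^2+\|c\|_0^2)$ and $\|a+b\|_0^2\le 2(\|a\|_0^2+\|b\|_0^2)$ explains the factors $2$ and $3$ appearing in the definitions of $C_{0,P}$ and $C_{1,P}$, and combining $\|a+b\|_0^2$ then each piece again for the middle term accounts for why the first estimate needs a $2$ in front of both summands while the second needs a $3$.
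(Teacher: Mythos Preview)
Your proposal is correct and is essentially the same argument as the paper's: the same three-term splitting $\bu^j-P_l\bu^j=(\bu^j-\bu_h^j)+(P_l\bu_h^j-P_l\bu^j)+(\bu_h^j-P_l\bu_h^j)$ is used, with \eqref{eq:cota_gal} on the first piece, \eqref{eq:inv_S} plus $L^2$-stability of $P_l$ on the second, and \eqref{eq:cota_pod_0}/\eqref{inv_2} on the third. The only cosmetic difference is that for the $L^2$ bound the paper invokes the best-approximation property $\|\bu^j-P_l\bu^j\|_0\le\|\bu^j-P_l\bu_h^j\|_0$ to reduce immediately to a clean two-term split (hence the factor $2$), which is equivalent to your use of the contraction $\|(I-P_l)w\|_0\le\|w\|_0$ once you group the first two pieces.
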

\begin{proof}
By definition of the $P_l$ projection
$$
\|\bu^j-P_l \bu^j\|_0\le \|\bu^j-P_l\bu_h^j\|_0.
$$
Then
$$
\|\bu^j-P_l \bu^j\|_0^2\le 2 \|\bu^j-\bu_h^j\|_0^2+2 \|\bu_h^j-P_l \bu_h^j\|_0^2.
$$
Applying now \eqref{eq:cota_gal} and \eqref{eq:cota_pod_0} we prove the first inequality in \eqref{eq:cota_pro_u}. To prove the second one
we write
\begin{eqnarray*}
\|\nabla(\bu^j-P_l \bu^j)\|_0^2&\le& 3\|\nabla(\bu^j-\bu_h^j)\|_0^2
+3\|\nabla(\bu_h^j-P_l \bu_h^j)\|_0^2
\nonumber\\
&&\quad+3 \|\nabla (P_l \bu_h^j-P_l\bu^j)\|_0^2.
\end{eqnarray*}
Taking into account that applying \eqref{eq:inv_S} we get
$$\|\nabla (P_l \bu_h^j-P_l\bu^j)\|_0^2\le \|S\|_2\|P_l(\bu_h^j-\bu^j)\|_0^2\le \|S\|_2\|\bu_h^j-\bu^j\|_0^2$$
we conclude by applying \eqref{eq:cota_gal} and \eqref{inv_2}.
\end{proof}
\subsection{A priori bounds for the orthogonal projection onto ${\cal V}^l$.}
In this section we will prove some a priori bounds for the orthogonal projection $P_l \bu^j$, $j=0,\cdots,M,$ that are
needed in the error analysis of the rest of the paper.
We start with the $L^\infty$ norm, using \eqref{inv} and \eqref{cota_sh_inf_mu}
we get
\begin{eqnarray*}
\|P_l \bu^j\|_\infty&\le& \|P_l\bu^j-\bs_h^m(\cdot,t_j)\|_\infty+\|\bs_h^m(\cdot,t_j)\|_\infty
\nonumber\\
&\le& C h^{-d/2}\|P_l\bu^j-\bs_h^m(\cdot,t_j)\|_0+C(\|\bu^j\|_{d-2}\|\bu^j\|_2)^{1/2}\nonumber\\
&\le&C h^{-d/2}\left(\|P_l\bu^j-\bu^j\|_0+\|\bu^j-\bs_h^m(\cdot,t_j)\|_0\right)+C(\|\bu^j\|_{d-2}\|\bu^j\|_2)^{1/2}.\nonumber
\end{eqnarray*}
Applying now  \eqref{eq:cota_pro_u} and \eqref{stokespro_mod} we obtain
\begin{eqnarray}\label{eq:cotaPlinf2}
\|P_l \bu^j\|_\infty\le C_{\rm inf}:=C h^{-d/2}(M^{1/2}C_{0,P}^{1/2}+C h^r\|\bu\|_r)+C(\|\bu^j\|_{d-2}\|\bu^j\|_2)^{1/2}.
\end{eqnarray}
Let us observe that in view of \eqref{eq:cotaPlinf2} and the definition of $C_{0,P}$ in \eqref{eq:cota_pro_u}
the following quantities have to be bounded:
\begin{eqnarray}\label{assumed}
\left[(\Delta t )^{-1/2}h^{r-d/2}\right],\left[ h^{-d/2}(\Delta t)^{3/2}\right], \left[(\Delta t)^{-1/2}h^{-d/2}\left(\sum_{k={l+1}}^{d_p}\lambda_k\right)^{1/2}\right].
\end{eqnarray}
\begin{remark}\label{re:apri}
The factor $M^{1/2}$ (essentially $(\Delta t)^{-1/2}$) appearing in  \eqref{eq:cotaPlinf2} comes from the rough estimate
$$
\|\bu^j-P_l \bu^j\|_0^2\le M\left(\frac{1}{M}\sum_{j=1}^M\|\bu^j-P_l\bu^j\|_0^2\right)\le M C_{0,P}.
$$
In practice one expects an equidistribution of the errors (no individual term much larger than others) in the $M$ factors in $\sum_{j=1}^M\|\bu^j-P_l\bu^j\|_0^2$
which would avoid the too pessimistic factor $(\Delta t)^{-1/2}$ in \eqref{assumed}. Actually, in some references this kind of assumption is included in the error analysis, see for example \cite[Assumption 3.2]{iliescu}. In other references, as in \cite{zerfas_et_al}, instead of taking the correlation matrix  $K=((k_{i,j}))\in {\mathbb R}^{M\times M}$ where
$$
k_{i,j}=\frac{1}{M}(\bu_h^i,\bu_h^j),
$$
they take
$$
k_{i,j}=(\bu_h^i,\bu_h^j),
$$
i.e., they drop the $1/M$ factor as suggested in \cite[Remark 3.2]{iliescu}. Then, instead of a bound for
$$
\frac{1}{M}\sum_{j=1}^M\|\bu^j-P_l\bu^j\|_0^2,
$$
as in \eqref{eq:cota_pro_u} one gets a bound for
$$
\sum_{j=1}^M\|\bu^j-P_l\bu^j\|_0^2,
$$
from which the bound for any of the terms $\|\bu^j-P_l\bu^j\|_0^2$ follows. The problem is that proceeding in this way the eigenvalues
of this approach  are the eigenvalues $\lambda_j$ in \eqref{eq:cota_pro_u} multiplied by $M$. For this reason we prefer to assume the quantities in \eqref{assumed}
are bounded since, in practice, this assumption is not hard to be satisfied while the method we propose has a smaller component of the error coming from the eigenvalues.
\end{remark}
We will get bounds for the orthogonal projection in two further norms.
Arguing as before, applying  \eqref{inv}, \eqref{la_cota_mu} we get
\begin{eqnarray*}
\|\nabla P_l\bu^j\|_{L^{2d/(d-1)}}&\le& \|\nabla(P_l \bu^j-\bs_h^m(\cdot,t_j))\|_{L^{2d/(d-1)}}+\|\nabla \bs_h^m(\cdot,t_j)\|_{L^{2d/(d-1)}}
\nonumber\\
&\le& C h^{-1/2}\|P_l \bu^j-\bs_h^m(\cdot,t_j)\|_1+C\bigl(\|\bu^j\|_1\|\bu^j\|_2\bigr)^{1/2}.
\end{eqnarray*}
Adding and subtracting $\bu^j$ and applying \eqref{eq:cota_pro_u}  and \eqref{stokespro_mod} we finally
obtain
\begin{eqnarray}\label{eq:cotanablaPld}
\|\nabla P_l\bu^j\|_{L^{2d/(d-1)}}&\le& C_{\rm ld}:=C h^{-1/2}\left(M^{1/2}C_{1,P}^{1/2}+C h^{r-1}\|\bu\|_r\right)\nonumber\\
&&\quad+C\bigl(\|\bu^j\|_1\|\bu^j\|_2\bigr)^{1/2}.
\end{eqnarray}
As before, we will assume $h$, $\Delta t$ and $l$ (number of modes) are chosen such that $h^{-1/2}M^{1/2}C_{1,P}$ is bounded.
Comments made on Remark 3.2 also apply here as well as for the following last bound.
Arguing as before, and applying  \eqref{inv} and \eqref{cotainfty1}
we get
\begin{eqnarray}\label{eq:grad_uh_infty}
\|\nabla P_l \bu^j\|_\infty&\le& \|\nabla P_l\bu^j-\nabla\bs_h^m(\cdot,t_j)\|_\infty+\|\nabla\bs_h^m(\cdot,t_j)\|_\infty
\nonumber\\
&\le& C h^{-d/2}\|\bu_h^j-\bs_h^m(\cdot,t_j)\|_1+C\|\nabla \bu^j\|_\infty.
\end{eqnarray}
Adding and subtracting $\bu^j$ and applying \eqref{eq:cota_pro_u}  and \eqref{stokespro_mod} we finally
obtain
\begin{eqnarray}\label{eq:cotanablaPlinf}
\|\nabla P_l\bu^j\|_{\infty}&\le&  C_{1,{\rm inf}}:=C h^{-d/2}\left(M^{1/2}C_{1,P}^{1/2}+C h^{r-1}\|\bu\|_r\right)+C\|\nabla \bu^j\|_\infty,\quad
\end{eqnarray}
so that in the sequel we assume $h^{-d/2}M^{1/2}C_{1,P}^{1/2}$ is bounded.

Slightly sharper a priori bounds can be obtained using a priori bounds for the Galerkin velocity approximation. 
We start with the $L^\infty$ norm, using \eqref{inv}, \eqref{cota_sh_inf_mu}, \eqref{eq:cota_gal} and  \eqref{stokespro_mod}
we get
\begin{eqnarray}\label{eq:uh_infty}
\|\bu_h^j\|_\infty&\le& \|\bu_h^j-\bs_h^m(\cdot,t_j)\|_\infty+\|\bs_h^m(\cdot,t_j)\|_\infty
\nonumber\\
&\le& C h^{-d/2}\|\bu_h^j-\bs_h^m(\cdot,t_j)\|_0+C(\|\bu^j\|_{d-2}\|\bu^j\|_2)^{1/2}\nonumber\\
&\le& C h^{-d/2}C(\bu,p,\nu,2)(h^2+\Delta t^2)+C(\|\bu^j\|_{d-2}\|\bu^j\|_2)^{1/2}\nonumber\\
%&\le& C_{\rm inf}:=C\left( \|\bu\|_{L^\infty(H^2)}+\left(\|\bu\|_{L^\infty(H^{d-2})}\|\bu\|_{L^\infty(H^2)}\right)^{1/2}\right).
&\le& C_{\bu,{\rm inf}}:=C\left( C(\bu,p,\nu,2)+(\|\bu^j\|_{d-2}\|\bu^j\|_2)^{1/2}\right),
\end{eqnarray}
whenever we assume the following condition holds for the time step
\begin{equation}\label{eq:Delta_t}
\Delta t \le C h^{d/4}.
\end{equation}

In the error bound \eqref{eq:uh_infty} we have included the factor $Ch^2\|\bu^j\|_2$ coming from the error $\|\bu^j-\bs_h^m(\cdot,t_j)\|_0$  into the factor $C(\bu,p,\nu,2)h^2$ coming from the error of the Galerkin method since $C(\bu,p,\nu,2)$ depends on $\|\bu\|_{L^\infty(H^2)}$.

Now we bound the $L^\infty$ norm of the gradient, using \eqref{inv}, \eqref{cotainfty1}, \eqref{eq:cota_gal} and  \eqref{stokespro_mod}
we get
\begin{eqnarray}\label{eq:grad_uh_infty}
\|\nabla\bu_h^j\|_\infty&\le& \|\nabla\bu_h^j-\nabla\bs_h^m(\cdot,t_j)\|_\infty+\|\nabla\bs_h^m(\cdot,t_j)\|_\infty
\nonumber\\
&\le& C h^{-d/2}\|\bu_h^j-\bs_h^m(\cdot,t_j)\|_1+C\|\nabla \bu^j\|_\infty\nonumber\\
&\le& C h^{-d/2}C(\bu,p,\nu,3)\left(h^2+(\Delta t)^2 \right)+C\|\nabla \bu^j\|_\infty\nonumber\\
&\le& C_{\bu,1,{\rm inf}}:= C\left(C(\bu,p,\nu,3)+\|\nabla  \bu\|_{L^\infty(L^\infty)}\right),
\end{eqnarray}
whenever condition \eqref{eq:Delta_t} holds.

Finally, we bound the $L^{2d/(d-1)}$ norm.  Using \eqref{inv}, \eqref{la_cota_mu}, \eqref{eq:cota_gal} and  \eqref{stokespro_mod}
and assuming again condition \eqref{eq:Delta_t} holds (indeed the weaker condition $\Delta t \le C h^{1/4}$ would be enough) we get
\begin{eqnarray}\label{eq:uh_2d_dmenosuno}
\|\nabla \bu_h^j\|_{L^{2d/(d-1)}}&\le& \|\nabla(\bu_h^j-\bs_h^m(\cdot,t_j))\|_{L^{2d/(d-1)}}+\|\nabla \bs_h^m(\cdot,t_j)\|_{L^{2d/(d-1)}}
\nonumber\\
&\le& C h^{-1/2}\|\bu_h^j-\bs_h^m(\cdot,t_j)\|_1+C\bigl(\|\bu\|_1\|\bu\|_2\bigr)^{1/2}\nonumber\\
&\le& C h^{-1/2}C(\bu,p,\nu,2)(h+(\Delta t)^2)+C\bigl(\|\bu\|_1\|\bu\|_2\bigr)^{1/2}\nonumber\\
&\le& C_{\bu,{\rm ld}}:=C\left( C(\bu,p,\nu,2)+\bigl(\|\bu\|_1\|\bu\|_2\bigr)^{1/2}\right).
\end{eqnarray}
Now, we prove a priori bounds in the same norms for $P_l \bu^j$. To this end, using inverse inequality \eqref{inv}, \eqref{eq:uh_infty},  the stability of
the $P_l$ projection, and \eqref{eq:cota_gal} we get
\begin{eqnarray}\label{eq:cotaP_i_inf2}
\|P_l \bu^j\|_\infty&\le& \|\bu_h^j\|_\infty+\|P_l \bu^j-\bu_h^j\|_\infty\le C_{\bu,{\rm inf}}+h^{-d/2}\|P_l \bu^j-\bu_h^j\|_0\nonumber\\
&\le&C_{\bu,{\rm inf}}+h^{-d/2}\|P_l (\bu^j-\bu_h^j)\|_0+h^{-d/2}\|P_l \bu_h^j-\bu_h^j\|_0\nonumber\\
&\le&C_{\bu,{\rm inf}}+h^{-d/2} \|\bu^j-\bu_h^j\|_0+h^{-d/2}\|P_l \bu_h^j-\bu_h^j\|_0\nonumber\\
&\le&C_{\bu,{\rm inf}}+h^{-d/2} C(\bu,p,\nu,2)(h^2+(\Delta t)^2)+h^{-d/2}\|P_l \bu_h^j-\bu_h^j\|_0\nonumber\\
&\le& C_{\bu,{\rm inf}}+C+h^{-d/2}\|P_l \bu_h^j-\bu_h^j\|_0,
\end{eqnarray}
where in the last inequality we assume, as before, condition \eqref{eq:Delta_t}. In view of \eqref{eq:cota_pod_0} we can write for the last term
$\|P_l \bu_h^j-\bu_h^j\|_0\le M^{1/2}\left(\sum_{k=l+1}^{d_p}\lambda_{k}\right)^{1/2}$, where, as before, the factor $M^{1/2}$ comes
from a rough estimate of any of the factors on the left-hand side in \eqref{eq:cota_pod_0}. Actually, this estimate can be slightly improved with 
the following argument.
It is easy to see that
$$
\bu_h^j-P_l \bu_h^j=\sum_{k=l+1}^{d_p}(\bu_h^j,\bpsi_k)\bpsi_k. 
$$
Using the definition of $\bpsi_k$ it is also easy to observe that
$$
\bu_h^j-P_l \bu_h^j=\sum_{k=l+1}^{d_p}(\bu_h^j,\bpsi_k)\bpsi_k=\sqrt{M}\sum_{k=l+1}^{d_p}\sqrt{\lambda_k}v_k^j\bpsi_k.
$$
And then
\begin{eqnarray}\label{cota_bos1}
\|\bu_h^j-P_l \bu_h^j\|_0&=&\sqrt{M}\left(\sum_{k=l+1}^{d_p}\lambda_k|v_k^j|^2\right)^{1/2}\le \sqrt{M}\sqrt{\lambda_{l+1}}
\left(\sum_{k=l+1}^{d_p}|v_k^j|^2\right)^{1/2}\nonumber\\
&\le& \sqrt{M}\sqrt{\lambda_{l+1}},
\end{eqnarray}
where in the last inequality we have used that $\left(\sum_{k=l+1}^{d_p}|v_k^j|^2\right)^{1/2}\le 1$ since the matrix with columns
the vectors $\bv_k$ can be enlarged to an $M\times M$ orthogonal matrix.

Inserting  \eqref{cota_bos1} into \eqref{eq:cotaP_i_inf2} we finally arrive
\begin{eqnarray}\label{inf_buena}
\|P_l \bu^j\|_\infty
&\le&C_{\rm inf}:= C_{\bu,{\rm inf}}+C+h^{-d/2}\sqrt{M}\sqrt{\lambda_{l+1}},
\end{eqnarray}
which is sharper than \eqref{eq:cotaPlinf2}.

Arguing similarly, using inverse inequality \eqref{inv}, \eqref{eq:grad_uh_infty}, \eqref{eq:inv_S}, the stability of
the $P_l$ projection, and \eqref{eq:cota_gal} we get
\begin{eqnarray}\label{eq:cotaP_i_inf3}
\|\nabla P_l \bu^j\|_\infty&\le& \|\nabla \bu_h^j\|_\infty+\|\nabla(P_l \bu^j-\bu_h^j)\|_\infty\nonumber\\
&\le& C_{\bu,1,{\rm inf}}+h^{-d/2}\|\nabla (P_l \bu^j-\bu_h^j)\|_0\nonumber\\
&\le&C_{\bu,1,{\rm inf}}+h^{-d/2}\| \nabla(P_l (\bu^j-\bu_h^j))\|_0+h^{-d/2}\|\nabla (P_l \bu_h^j-\bu_h^j)\|_0\nonumber\\
&\le&C_{\bu,{\rm inf}}+h^{-d/2}\|S\|_2^{1/2} \|\bu^j-\bu_h^j\|_0+h^{-d/2}\|\nabla (P_l \bu_h^j-\bu_h^j)\|_0\nonumber\\
&\le&C_{\bu,{\rm inf}}+h^{-d/2} \|S\|_2^{1/2}C(\bu,p,\nu,2)(h^2+(\Delta t)^2)\nonumber\\
&&\quad+h^{-d/2}\|\nabla (P_l \bu_h^j-\bu_h^j)\|_0\nonumber\\
&\le& C_{\bu,{\rm inf}}+C+h^{-d/2}\|\nabla(P_l \bu_h^j-\bu_h^j)\|_0.
\end{eqnarray}
Finally, from \eqref{inv_2} we get
$$
\|\nabla(P_l \bu_h^j-\bu_h^j)\|_0\le \sqrt{M}\|S\|_2^{1/2}\left(\sum_{k=l+1}^{d_p}\lambda_k\right)^{1/2},
$$
which inserted into \eqref{eq:cotaP_i_inf3} gives
\begin{eqnarray}\label{1_inf_buena}
\|\nabla P_l \bu^j\|_\infty
&\le&C_{1,\rm inf}:= C_{\bu,1,{\rm inf}}+C+h^{-d/2}\sqrt{M}\|S\|_2^{1/2}\left(\sum_{k=l+1}^{d_p}\lambda_k\right)^{1/2}.
\end{eqnarray}
Arguing exactly as before and applying \eqref{eq:uh_2d_dmenosuno} we also obtain
\begin{eqnarray}\label{ld_buena}
\|\nabla P_l \bu^j\|_{L^{2d/(d-1)}}
&\le&C_{\rm ld}:= C_{\bu,{\rm ld}}+C+h^{-1/2}\sqrt{M}\|S\|_2^{1/2}\left(\sum_{k=l+1}^{d_p}\lambda_k\right)^{1/2}.
\end{eqnarray}

\section{The POD Data assimilation algorithm}\label{sec:DA}

For any initial condition  the POD data assimilation approximation using the implicit Euler method and grad-div stabilization is obtained by solving for $n\ge 1$:
\begin{eqnarray}\label{eq:pod_method}
&&\left(\frac{\bu_l^{n}-\bu_l^{n-1}}{\Delta t},\bvar_l\right)+\nu(\nabla \bu_l^n,\nabla\bvar_l)+b_h(\bu_l^n,\bu_l^n,\bvar_l)
+\mu(\nabla \cdot\bu_l^n,\nabla \cdot\bvar_l)\nonumber\\
&&\quad=(\bff^n,\bvar_l)-\beta(I_H \bu_l^n-I_H\bu^n,I_H\bvar_l),\quad \forall \bvar_l\in {\cal V}^l,
\end{eqnarray}
where $\mu$ is the grad-div stabilization parameter, $\beta$ is the nudging parameter and $I_H$ is an interpolation operator over a coarse mesh. 
\begin{Theorem} \label{th_prin} Let $\bu_l^n$ be the grad-div-DA-ROM approximation defined in \eqref{eq:pod_method},  let $\bu^n$ be the velocity approximation
of the Navier-Stokes equations \eqref{NS} at time $t_n$ and let $P_l \bu^n$ be its orthogonal projection over the POD space ${\cal V}^l$. Assuming the solution $(\bu,p)$ of \eqref{NS} is smooth enough the following bound holds
\begin{eqnarray}\label{cota_final}
\|\bu_l^n-P_l \bu^n\|_0^2\le \frac{1}{\left(1+\frac{\gamma}{2}\Delta t\right)^n}\|\be_l^0\|_0^2
+TC_{1,P}\left(\nu+2\mu+\frac{2}{L}\left(\|\bu\|_2+C_{\rm ld}+C_{\rm inf}\right)\right)\nonumber\\
+T\beta c_0^2 C_{0,P}+ \frac{C}{\mu} h^{2(r-1)}\Delta t  \sum_{j=1}^n \|p^j\|_{r-1}^2+ \frac{C(\Delta t)^2}{L}\int_0^{t_n}\|\bu_{tt}(s)\|_0^2~ds,
\end{eqnarray}
where $C_{0,P}$, $C_{1,P}$ are the constants in \eqref{eq:cota_pro_u0}, and $C_{\rm ld}$, $C_{\rm inf}$ are the constants in \eqref{inf_buena}, \eqref{ld_buena}.
\end{Theorem}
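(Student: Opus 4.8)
The plan is to work with the error $\be_l^n:=\bu_l^n-P_l\bu^n\in{\cal V}^l$ together with the projection error $\boldsymbol\eta^n:=\bu^n-P_l\bu^n$, derive an error equation for $\be_l^n$, test it against $\be_l^n$ itself, and close with a discrete (geometric) Gr\"onwall argument. First I would write the weak equation satisfied by the exact solution at time $t_n$ against test functions $\bvar_l\in{\cal V}^l$. Since ${\cal V}^l\subset V_{h,r}$, one has $(\nabla q_h,\bvar_l)=0$ for every $q_h\in Q_{h,r-1}$, so the continuous pressure may be replaced by $p^n-P_Qp^n$, which is $O(h^{r-1}\|p^n\|_{r-1})$ in $L^2$ by \eqref{eq:L2pre}; since $\nabla\cdot\bu^n=0$, the grad-div term and the divergence part of $b_h(\bu^n,\bu^n,\cdot)$ are added at no cost; and the implicit Euler quotient $(\bu^n-\bu^{n-1})/\Delta t$ differs from $\partial_t\bu^n$ by a truncation term $\btau^n$ with $\|\btau^n\|_0^2\le C\Delta t\int_{t_{n-1}}^{t_n}\|\bu_{tt}(s)\|_0^2\,ds$. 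Subtracting this equation from \eqref{eq:pod_method} and decomposing $\bu_l^n-\bu^n=\be_l^n-\boldsymbol\eta^n$ yields the error equation for $\be_l^n$.

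Next I would take $\bvar_l=\be_l^n$. The crucial algebraic fact is that $P_l$ is the $L^2$-orthogonal projection onto the \emph{fixed} space ${\cal V}^l$, so $(\boldsymbol\eta^j,\bw)=0$ for all $\bw\in{\cal V}^l$ and all $j$; hence the $\boldsymbol\eta$-contribution to the discrete time derivative vanishes and only $(\be_l^n-\be_l^{n-1},\be_l^n)/\Delta t\ge(\|\be_l^n\|_0^2-\|\be_l^{n-1}\|_0^2)/(2\Delta t)$ survives. The nudging term becomes $-\beta\|I_H\be_l^n\|_0^2+\beta(I_H\boldsymbol\eta^n,I_H\be_l^n)$; its leading part is nonpositive --- this is exactly why no upper bound on $\beta$ is needed --- and its cross term is absorbed by $\beta\|I_H\be_l^n\|_0^2$ up to $\tfrac{\beta c_0^2}{2}\|\boldsymbol\eta^n\|_0^2$, using \eqref{eq:L^2inter}. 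The viscous term gives $\nu\|\nabla\be_l^n\|_0^2$ minus a cross term with $\boldsymbol\eta^n$, which after Young's inequality leaves $\tfrac{\nu}{2}\|\nabla\boldsymbol\eta^n\|_0^2$; the grad-div term gives $\mu\|\nabla\cdot\be_l^n\|_0^2$ minus a cross term, which by \eqref{eq:div} leaves a multiple of $\mu\|\nabla\boldsymbol\eta^n\|_0^2$; the pressure consistency term is absorbed into $\mu\|\nabla\cdot\be_l^n\|_0^2$, leaving $\tfrac{C}{\mu}h^{2(r-1)}\|p^n\|_{r-1}^2$; and $(\btau^n,\be_l^n)$ is controlled by Young's inequality against a fraction of the net dissipation, leaving $\tfrac{C}{L}\|\btau^n\|_0^2$. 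The only ``small-parameter'' penalties appearing here are powers of $\nu$ and $\mu$, and since $\mu$ is a free stabilization parameter, the resulting constants stay robust in $\nu$.

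The heart of the proof is the nonlinear term $b_h(\bu_l^n,\bu_l^n,\be_l^n)-b_h(\bu^n,\bu^n,\be_l^n)=b_h(\bu_l^n-\bu^n,\bu_l^n,\be_l^n)+b_h(\bu^n,\bu_l^n-\bu^n,\be_l^n)$. Substituting $\bu_l^n-\bu^n=\be_l^n-\boldsymbol\eta^n$ and $\bu_l^n=\be_l^n+P_l\bu^n$ in the middle slot of the first term, and using the skew-symmetry \eqref{skew} to discard $b_h(\bu^n,\be_l^n,\be_l^n)$, $b_h(\be_l^n,\be_l^n,\be_l^n)$ and $b_h(\boldsymbol\eta^n,\be_l^n,\be_l^n)$, one is left with the quadratic term $b_h(\be_l^n,P_l\bu^n,\be_l^n)$ plus the linear-in-$\be_l^n$ remainders $-b_h(\boldsymbol\eta^n,P_l\bu^n,\be_l^n)$ and $-b_h(\bu^n,\boldsymbol\eta^n,\be_l^n)$. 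I would bound the quadratic term by splitting $b_h$ into its convective and divergence parts: the convective part is estimated through the skew-symmetric form together with a Ladyzhenskaya/Gagliardo--Nirenberg inequality, the a priori bounds \eqref{inf_buena}, \eqref{1_inf_buena}, \eqref{ld_buena} on $P_l\bu^n$, and the POD inverse inequality \eqref{eq:inv_S}, which after \eqref{eq:inv_S} collapses to a bound of the form $C\|\be_l^n\|_0^2$ with $C$ depending on $C_{\rm inf}$, $C_{\rm ld}$ and $\|S\|_2$ but not on $\nu$; the divergence part is estimated by $\|\nabla\cdot\be_l^n\|_0\|P_l\bu^n\|_\infty\|\be_l^n\|_0$ and absorbed into $\mu\|\nabla\cdot\be_l^n\|_0^2$. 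The linear remainders are estimated by Cauchy--Schwarz, H\"older and Sobolev embeddings using $\|\bu^n\|_2$, $C_{\rm inf}$, $C_{\rm ld}$ and \eqref{eq:div}; after Young's inequality they contribute a fraction of the net dissipation plus terms bounded by $\tfrac{C}{L}(\|\bu\|_2+C_{\rm ld}+C_{\rm inf})\|\nabla\boldsymbol\eta^n\|_0^2$.

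Collecting everything I would arrive at $\tfrac{1}{2\Delta t}(\|\be_l^n\|_0^2-\|\be_l^{n-1}\|_0^2)+\tfrac{\gamma}{2}\|\be_l^n\|_0^2\le R^n$, where the net dissipation rate $\gamma>0$ comes from the viscous, grad-div and nudging terms (converted to $\|\be_l^n\|_0^2$ via a Poincar\'e inequality together with \eqref{eq:cotainter} and \eqref{eq:inv_S}) after subtracting the $\|\be_l^n\|_0^2$ contributions of the quadratic and linear nonlinear terms --- its positivity amounts to a \emph{lower} bound on $\beta,\nu,\mu$ in terms of the data, never an upper bound on $\beta$ --- and $R^n$ collects $\tfrac{\nu}{2}\|\nabla\boldsymbol\eta^n\|_0^2$, a multiple of $\mu\|\nabla\boldsymbol\eta^n\|_0^2$, $\tfrac{\beta c_0^2}{2}\|\boldsymbol\eta^n\|_0^2$, $\tfrac{C}{L}(\|\bu\|_2+C_{\rm ld}+C_{\rm inf})\|\nabla\boldsymbol\eta^n\|_0^2$, $\tfrac{C}{\mu}h^{2(r-1)}\|p^n\|_{r-1}^2$ and $\tfrac{C}{L}\|\btau^n\|_0^2$. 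Rewriting this as $\|\be_l^n\|_0^2\le(1+\tfrac{\gamma}{2}\Delta t)^{-1}\|\be_l^{n-1}\|_0^2+2\Delta t\,(1+\tfrac{\gamma}{2}\Delta t)^{-1}R^n$, iterating down to $n=0$ and bounding the geometric weights multiplying the $R^j$ by $1$, I would then sum over the time levels: Lemma \ref{lema_iliescu} gives $\Delta t\sum_{j=1}^n\|\nabla\boldsymbol\eta^j\|_0^2\le TC_{1,P}$ and $\Delta t\sum_{j=1}^n\|\boldsymbol\eta^j\|_0^2\le TC_{0,P}$, while $\Delta t\sum_{j=1}^n\|\btau^j\|_0^2\le C(\Delta t)^2\int_0^{t_n}\|\bu_{tt}(s)\|_0^2\,ds$, which produces exactly \eqref{cota_final}. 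I expect the main obstacle to be the quadratic term $b_h(\be_l^n,P_l\bu^n,\be_l^n)$: keeping its bound free of inverse powers of $\nu$ and avoiding any upper bound on $\beta$ is possible only because the grad-div stabilization absorbs its divergence part, the a priori bounds \eqref{inf_buena}, \eqref{1_inf_buena}, \eqref{ld_buena} keep its convective part $\nu$-independent, and the POD inverse inequality \eqref{eq:inv_S} both reduces it to a multiple of $\|\be_l^n\|_0^2$ and lets the nudging dissipation $\beta\|I_H\be_l^n\|_0^2$ (via \eqref{eq:cotainter}) dominate it without forcing $\beta$ to be small.
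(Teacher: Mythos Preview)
Your overall architecture matches the paper's: derive the error equation for $\be_l^n=\bu_l^n-P_l\bu^n$, test with $\be_l^n$, use $L^2$-orthogonality of $P_l$ to kill the $\boldsymbol\eta$-part of the discrete time derivative, obtain a recursion $(1+\tfrac{\gamma}{2}\Delta t)\|\be_l^n\|_0^2\le\|\be_l^{n-1}\|_0^2+\Delta t\,\|\btau^n\|_0^2$, iterate, and close with Lemma~\ref{lema_iliescu}. Two places, however, diverge from the paper and are worth correcting.

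\emph{Quadratic nonlinear term.} For $b_h(\be_l^n,P_l\bu^n,\be_l^n)$ you invoke Ladyzhenskaya and the POD inverse inequality \eqref{eq:inv_S}, which would drag $\|S\|_2$ into the constant. The paper bounds this term directly via
\[
|b_h(\be_l^n,P_l\bu^n,\be_l^n)|\le \|\nabla P_l\bu^n\|_\infty\|\be_l^n\|_0^2+\tfrac12\|\nabla\cdot\be_l^n\|_0\,\|P_l\bu^n\|_\infty\|\be_l^n\|_0,
\]
using only the a~priori bounds \eqref{inf_buena} and \eqref{1_inf_buena}; after Young this yields $\tfrac{L}{2}\|\be_l^n\|_0^2+\tfrac{\mu}{4}\|\nabla\cdot\be_l^n\|_0^2$ with $L=2\max_n\bigl(\|\nabla P_l\bu^n\|_\infty+\tfrac{1}{4\mu}\|P_l\bu^n\|_\infty^2\bigr)$, which is the very $L$ appearing in~\eqref{cota_final}. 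No Gagliardo--Nirenberg and no POD inverse inequality are needed here.

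\emph{Absorption mechanism and the constant $\gamma$.} Your description of how $\tfrac{\gamma}{2}\|\be_l^n\|_0^2$ arises is vague and again cites \eqref{eq:inv_S}; but \eqref{eq:inv_S} is an \emph{upper} bound on $\|\nabla\be_l^n\|_0$ and cannot manufacture $L^2$ dissipation out of the viscous term. The paper's mechanism uses only \eqref{eq:cotainter} and runs as follows: split $L\|\be_l^n\|_0^2\le 2L\|I_H\be_l^n\|_0^2+2L\|(I-I_H)\be_l^n\|_0^2$; absorb the first piece into $\tfrac{\beta}{2}\|I_H\be_l^n\|_0^2$ assuming $\beta\ge 8L$, and the second into $\nu\|\nabla\be_l^n\|_0^2$ via $\|(I-I_H)\be_l^n\|_0\le c_IH\|\nabla\be_l^n\|_0$ assuming $H\le \nu^{1/2}/\bigl((8L)^{1/2}c_I\bigr)$. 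Applying the same splitting once more to the surviving $\tfrac{\nu}{2}\|\nabla\be_l^n\|_0^2+\tfrac{\beta}{2}\|I_H\be_l^n\|_0^2$ produces $\tfrac{\gamma}{2}\|\be_l^n\|_0^2$ with $\gamma=\min\{\tfrac{\nu}{2}c_I^{-2}H^{-2},\tfrac{\beta}{2}\}$. This is precisely why only a \emph{lower} bound on $\beta$ and a smallness condition on $H$ enter, and never an upper bound on $\beta$.

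In short, your plan would go through, but the paper's proof nowhere uses \eqref{eq:inv_S}; dropping it simplifies the nonlinear estimate and makes the dissipation argument transparent, and delivers exactly the constants $L$ and $\gamma$ that appear in \eqref{cota_final} and \eqref{eq:gamma}.
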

\begin{proof}
Following \cite{zerfas_et_al} we will compare $\bu_l^n$ with $P_l \bu^n$. It is easy to obtain
\begin{eqnarray}\label{eq:pro_u_vl}
\left( \frac{P_l \bu^n-P_l\bu^{n-1}}{\Delta t},\bvar_l\right)
+\nu(\nabla P_l \bu^{n},\nabla \bvar_l)+b_h(P_l\bu^n,P_l \bu^n,\bvar_l)
\nonumber\\
+\mu(\nabla \cdot P_l\bu^n,\nabla \cdot \bvar_l)
=(\bff^n,\bvar_l)+
\nu(\nabla \btau_1^n,\nabla \bvar_l)+(\btau_2^n,\nabla \cdot \bvar_l) \nonumber\\+
(\btau_3^n,\bvar_l)+(\btau_4^n,\bvar_l),\quad \forall \bvar_l\in {\cal V}^l,
\end{eqnarray}
where $\btau_1^n$, $\btau_2^n$, $\btau_3^n$ and $\btau_4^n$ are defined by:
\begin{eqnarray}\label{eq:trun}
\btau_1^n&=&(P_l\bu^n-\bu^n),\nonumber\\
\btau_2^n&=&\left(p^n-P_{Q}(p^n)\right)+\mu\left(\nabla \cdot (P_l \bu^n-\bu^n)\right),\nonumber\\
\btau_3^n&=&\frac{1}{\Delta t}(\bu^n-\bu^{n-1})-\bu_t^n,\\
(\btau_4^n,\bvar_l)&=&b_h(P_l\bu^n,P_l \bu^n,\bvar_l)-b_h(\bu^n,\bu^n,\bvar_l),\nonumber
\end{eqnarray}
and we denote by $P_Q$ the $L^2$ orthogonal projection onto $Q_{h,r-1}$.

Let us denote by
$$
\be_l^n=\bu_l^n-P_l \bu^n.
$$
Subtracting \eqref{eq:pro_u_vl} from \eqref{eq:pod_method} and taking $\bvar_l=\be_l^n$ we get
\begin{eqnarray}\label{eq:error1}
&&\frac{1}{2\Delta t}\left(\|\be_l^n\|_0^2-\|\be_l^{n-1}\|_0^2\right)+\nu\|\nabla \be_l^n\|_0^2+\mu\|\nabla \cdot \be_l^n\|_0^2
+\beta\|I_H\be_l^n\|_0^2\\
&&\quad \le -b_h(\bu_l^n,\bu_l^n,\be_l^n)+b_h(P_l \bu^n,P_l\bu^n,\be_l^n)+\beta(I_H(\bu^n-P_l \bu^n),I_H\be_l^n)\nonumber\\
&&\quad \ -\nu(\nabla \btau_1^n,\nabla \be_l^n)-(\btau_2^n,\nabla \cdot \be_l^n)-
(\btau_3^n,\be_l^n)-(\btau_4^n,\be_l^n).\nonumber
\end{eqnarray}
We will argue as in \cite{bosco_y_yo}.

For the first term on the right-hand side of \eqref{eq:error1} using the skew-symmetric property \eqref{skew} we get
\begin{eqnarray}\label{eq:nonli}
&&\left|b_h(\bu_l^n,\bu_l^n,\be_l^n)-b_h(P_l \bu^n,P_l\bu^n,\be_l^n)\right|=\left|b_h(\be_l^n,P_l\bu^n,\be_l^n)\right|\nonumber\\
&&\quad\le\|\nabla P_l\bu^n\|_\infty\|\be_l^n\|_0^2+\frac{1}{2}\|\nabla \cdot \be_l^n\|_0\|P_l\bu^n\|_\infty\|\be_l^n\|_0\nonumber\\
&&\quad\le\frac{L}{2}\|\be_l^n\|_0^2+\frac{\mu}{4}\|\nabla \cdot \be_l^n\|_0^2,
\end{eqnarray}
where
\begin{eqnarray}\label{eq:laL}
L=2\max_{n\ge 0}\left(\|\nabla P_l \bu^n\|_\infty+\frac{1}{4\mu}\|P_l\bu^m\|_\infty^2\right)
\le 2 \left(C_{1,\rm inf}+\frac{C_{{\rm inf}}^2}{4\mu}\right),
\end{eqnarray}
and we have applied \eqref{inf_buena} and \eqref{1_inf_buena} in the last inequality.

For the second term on the right-hand side of \eqref{eq:error1}, applying the $L^2$-stability of the interpolation operator
\eqref{eq:L^2inter} we get
\begin{eqnarray}\label{eq:conbeta}
\beta(I_H(\bu^n-P_l \bu^n),I_H\be_l^n)&\le& \beta c_0 \|\bu^n-P_l\bu^n\|_0\|I_H\be_l^n\|_0\nonumber\\
&\le& \frac{\beta}{2}c_0^2\|\bu^n-P_l\bu^n\|_0^2+\frac{\beta}{2}\|I_H\be_l^n\|_0^2.
\end{eqnarray}
For the truncation errors we write
\begin{eqnarray}\label{eq:trun2}
|\nu(\nabla \btau_1^n,\nabla \be_l^n)|&\le& \frac{\nu}{2}\|\nabla \btau_1^n\|_0^2+\frac{\nu}{2}\|\nabla \be_l^n\|_0^2,\nonumber\\
|(\btau_2^n,\nabla \cdot\be_l^n)|&\le& \frac{\|\btau_2^n\|_0^2}{\mu}+\frac{\mu}{4}\|\nabla \cdot \be_l^n\|_0^2,\\
|(\btau_3^n+\btau_4^n,\be_l^n)|&\le&\frac{1}{2L}\|\btau_3^n+\btau_4^n\|_0^2+\frac{L}{2}\|\be_l^n\|_0^2.\nonumber
\end{eqnarray}
Inserting \eqref{eq:nonli}, \eqref{eq:conbeta} and \eqref{eq:trun2} into \eqref{eq:error1} we get
\begin{eqnarray}\label{eq:error2}
&&\frac{1}{2}\frac{1}{\Delta t}\left(\|\be_l^n\|_0^2-\|\be_l^{n-1}\|_0^2\right)+\frac{\nu}{2}\|\nabla \be_l^n\|_0^2
+\frac{\beta}{2}\|I_H \be_l^n\|_0^2+\frac{\mu}{2}\|\nabla \cdot \be_l^n\|_0^2\\
&&\quad\le L\|\be_l^n\|_0^2+\frac{\nu}{2}\|\nabla \btau_1^n\|_0^2+\frac{\|\btau_2^n\|_0^2}{\mu}+\frac{1}{2L}\|\btau_3^n+\btau_4^n\|_0^2+\frac{\beta}{2}c_0^2\|\bu^n-P_l\bu^n\|_0^2.\nonumber
\end{eqnarray}
The following argument is taken from \cite{bosco_y_yo} and \cite{bosco_titi_yo}. We first observe that
$$
L\|\be_l^n\|_0^2\le 2L \|I_H \be_l^n\|_0^2+2L\|(I-I_H)\be_l^n\|_0^2
$$
so that assuming
$$
\beta\ge 8L
$$
and multiplying \eqref{eq:error2} by 2 we obtain
\begin{eqnarray*}
\frac{1}{\Delta t}\left(\|\be_l^n\|_0^2-\|\be_l^{n-1}\|_0^2\right)+{\nu}\|\nabla \be_l^n\|_0^2
+\frac{\beta}{2}\|I_H \be_l^n\|_0^2+{\mu}\|\nabla \cdot \be_l^n\|_0^2-4L\|(I-I_H)\be_l^n\|_0^2\nonumber\\
\le {\nu}\|\nabla \btau_1^n\|_0^2+\frac{2\|\btau_2^n\|_0^2}{\mu}+\frac{1}{L}\|\btau_3^n+\btau_4^n\|_0^2+{\beta}c_0^2\|\bu^n-P_l\bu^n\|_0^2.
\end{eqnarray*}
Applying \eqref{eq:cotainter} we have
$$
{\nu}\|\nabla  \be_l^n\|_0^2-4L\|(I-I_H)\be_l^n\|_0^2
\ge {\nu}\|\nabla  \be_l^n\|_0^2-4L c_I^2H^2\|\nabla \be_l^n\|_0^2\ge \frac{\nu}{2}\|\nabla \be_l^n\|_0^2,
$$
whenever
\begin{equation}\label{eq:laH}
H\le \frac{\nu^{1/2}}{(8L)^{1/2}c_I},
\end{equation}
and then
\begin{eqnarray}\label{eq:error3}
\frac{1}{\Delta t}\left(\|\be_l^n\|_0^2-\|\be_l^{n-1}\|_0^2\right)+\frac{\nu}{2}\|\nabla \be_l^n\|_0^2
+\frac{\beta}{2}\|I_H \be_l^n\|_0^2+{\mu}\|\nabla \cdot \be_l^n\|_0^2\nonumber\\
\le {\nu}\|\nabla \btau_1^n\|_0^2+\frac{2\|\btau_2^n\|_0^2}{\mu}+\frac{1}{L}\|\btau_3^n+\btau_4^n\|_0^2+{\beta}c_0^2\|\bu^n-P_l\bu^n\|_0^2.
\end{eqnarray}
Applying \eqref{eq:cotainter} again we get
\begin{eqnarray*}
\frac{\nu}{2}\|\nabla \be_l^n\|_0^2
+\frac{\beta}{2}\|I_H \be_l^n\|_0^2
&\ge& \frac{\nu}{2}c_I^{-2}H^{-2}\|(I-I_H)\be_l^n\|_0^2+\frac{\beta}{2}\|I_H \be_l^n\|_0^2\nonumber\\
&\ge& \gamma\left(\|I_H \be_l^n\|_0^2+\|(I-I_H)\be_l^n\|_0^2\right)\ge\frac{\gamma}{2}\|\be_l^n\|_0^2,
\end{eqnarray*}
where
\begin{equation}\label{eq:gamma}
\gamma=\min\left\{\frac{\nu}{2}c_I^{-2}H^{-2},\frac{\beta}{2}\right\},
\end{equation}
and then, going back to \eqref{eq:error3} we reach
\begin{eqnarray}\label{eq:error4}
&&\frac{1}{\Delta t}\left(\|\be_l^n\|_0^2-\|\be_l^{n-1}\|_0^2\right)+\frac{\gamma}{2}\|\be_l^n\|_0^2+{\mu}\|\nabla \cdot \be_l^n\|_0^2\nonumber\\
&&\quad\le {\nu}\|\nabla \btau_1^n\|_0^2+\frac{2\|\btau_2^n\|_0^2}{\mu}+\frac{1}{L}\|\btau_3^n+\btau_4^n\|_0^2+{\beta}c_0^2\|\bu^n-P_l\bu^n\|_0^2.
\end{eqnarray}
Let us denote by
$$
\|\btau^n\|_0^2:={\nu}\|\nabla \btau_1^n\|_0^2+\frac{2\|\btau_2^n\|_0^2}{\mu}+\frac{1}{L}\|\btau_3^n+\btau_4^n\|_0^2+{\beta}c_0^2\|\bu^n-P_l\bu^n\|_0^2.
$$
From \eqref{eq:error4} we have
$$
\left(1+\frac{\gamma}{2}\Delta t\right)\|\be_l^n\|_0^2\le  \|\be_l^{n-1}\|_0^2+\Delta t\|\btau^n\|_0^2,
$$
and then for $1\le n\le M$ we get
\begin{eqnarray}\label{eq:error5}
\|\be_l^n\|_0^2&\le& \frac{1}{\left(1+\frac{\gamma}{2}\Delta t\right)^n}\|\be_l^0\|_0^2
+\Delta t \sum_{j=1}^n\frac{1}{\left(1+\frac{\gamma}{2}\Delta t\right)^{n-j+1}}\|\btau^j\|_0^2\nonumber\\
&\le&\frac{1}{\left(1+\frac{\gamma}{2}\Delta t\right)^n}\|\be_l^0\|_0^2
+\Delta t \sum_{j=1}^n\|\btau^j\|_0^2.
\end{eqnarray}
To conclude we need to bound the truncation error on the right-hand side of \eqref{eq:error5}. We first observe that
applying \eqref{eq:cota_pro_u} we get
\begin{eqnarray}\label{eq:tau1}
\nu\Delta t \sum_{j=1}^n\|\nabla\btau_1^j\|_0^2&=&  \frac{\nu T}{M}\sum_{j=1}^n\|\nabla\btau_1^j\|_0^2
\le \frac{\nu T}{M}\sum_{j=1}^M\|\nabla(P_l \bu^j-\bu^j)\|_0^2\nonumber\\
&\le& \nu TC_{1,P}.
\end{eqnarray}
For the second term in the truncation error applying \eqref{eq:L2pre}, \eqref{eq:div} and \eqref{eq:cota_pro_u} again we get
\begin{eqnarray}\label{eq:tau2}
\Delta t \sum_{j=1}^n\frac{\|\btau_2^j\|_0^2}{\mu}&\le& \frac{2}{\mu}\Delta t \sum_{j=1}^n\|p^j-P_Q (p^j)\|_0^2+\frac{2\mu T}{M}\sum_{j=1}^M\|\nabla(P_l \bu^j-\bu^j)\|_0^2\nonumber\\
&\le& \frac{C}{\mu} h^{2(r-1)}\Delta t  \sum_{j=1}^n \|p^j\|_{r-1}^2+ 2\mu T C_{1,P}.
\end{eqnarray}
For the first term in the third term of the truncation error we obtain
\begin{eqnarray}\label{eq:tau3}
\Delta t \sum_{j=1}^n\|\btau_3^j\|_0^2=\Delta t \sum_{j=1}^n \left\|\bu_t^j-\frac{\bu^j-\bu^{j-1}}{\Delta t}\right\|_0^2\le C
(\Delta t)^2\int_0^{t_n}\|\bu_{tt}(s)\|_0^2~ds.
\end{eqnarray}
For the second term in the third term of the truncation error we apply \cite[Lemma 2]{NS_grad_div} and \eqref{eq:cotaPlinf2} and
\eqref{eq:cotanablaPlinf} to get
\begin{eqnarray*}
&&\left\|b_h(P_l\bu^j,P_l \bu^j,\bvar_l)-b_h(\bu^j,\bu^j,\bvar_l)\right\|_0\nonumber\\
&&\quad \le \left(\|\bu\|_2+\|\nabla P_l \bu^j\|_{L^{2d/(d-1)}}
+\|P_l \bu^j\|_\infty\right)\|\nabla (P_l \bu^j-\bu^j)\|_0\nonumber\\
&&\quad \le \left(\|\bu\|_2+C_{\rm ld}+C_{\rm inf}\right)\|\nabla (P_l \bu^j-\bu^j)\|_0.
\end{eqnarray*}
And then applying \eqref{eq:cota_pro_u} we get
\begin{eqnarray}\label{eq:tau4}
\Delta t \sum_{j=1}^n\|\btau_4^j\|_0^2&\le& \frac{T}{M}\left(\|\bu\|_2+C_{\rm ld}+C_{\rm inf}\right)\sum_{j=1}^M\|\nabla(P_l \bu^j-\bu^j)\|_0^2
\nonumber\\
&\le& T\left(\|\bu\|_2+C_{\rm ld}+C_{\rm inf}\right)C_{1,P}.
\end{eqnarray}
Finally, for the last term in the truncation error applying \eqref{eq:cota_pro_u} again we obtain
\begin{eqnarray}\label{eq:tauulti}
\beta c_0^2\Delta t \sum_{j=1}^n\|P_l \bu^j-\bu^j\|_0^2\le \frac{T\beta c_0^2}{M}\sum_{j=1}^n\|P_l \bu^j-\bu^j\|_0^2
\le T\beta c_0^2 C_{0,P}.
\end{eqnarray}
Inserting \eqref{eq:tau1}, \eqref{eq:tau2}, \eqref{eq:tau3}, \eqref{eq:tau4} and \eqref{eq:tauulti} into
\eqref{eq:error5} we conclude \eqref{cota_final}.
\end{proof}.
\begin{remark}
Let us observe that for the POD data assimilation method we can start from $\bu_l^0=0$ since the initial error decays exponentially
to zero. For the set of snapshots we do not need to include $\bu_h^0$ since we apply \eqref{eq:cota_pro_u} with $j$ starting at $1$.
This is different from references \cite{iliescu}, \cite{kunisch} where the initial condition $\bu_h^0$ is included into
the set of snapshots and agrees with \cite{zerfas_et_al}.
\end{remark}
\begin{Theorem}Assuming conditions of Theorem \ref{th_prin} hold the following bounds can be obtained
\begin{eqnarray}\label{cota_final3}
\frac{1}{M}\sum_{j=1}^M\|\bu_l^j-\bu^j\|_0^2&\le&{\left(1+\frac{\gamma}{2}\Delta t\right)^{-1}}\|\be_l^0\|_0^2
+(T\beta c_0^2+1) C_{0,P}\nonumber\\
&&\ +TC_{1,P}\left(\nu+2\mu+\frac{2}{L}\left(\|\bu\|_2+C_{\rm ld}+C_{\rm inf}\right)\right)\\
&&\ + \frac{C}{\mu} h^{2(r-1)}\Delta t  \sum_{j=1}^M \|p^j\|_{r-1}^2+ \frac{C(\Delta t)^2}{L}\int_0^{T}\|\bu_{tt}(s)\|_0^2~ds.\nonumber
\end{eqnarray}
\end{Theorem}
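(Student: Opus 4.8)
The plan is to derive this mean-square estimate directly from the work already done in the proof of Theorem~\ref{th_prin}, using one elementary structural observation. Since $\bu_l^j\in{\cal V}^l$ and $P_l\bu^j\in{\cal V}^l$, the error $\be_l^j=\bu_l^j-P_l\bu^j$ lies in ${\cal V}^l$, while $\bu^j-P_l\bu^j=(I-P_l)\bu^j$ is $L^2$-orthogonal to ${\cal V}^l$ because $P_l$ is the $L^2$-orthogonal projection. Hence, for every $j$ the Pythagorean identity gives $\|\bu_l^j-\bu^j\|_0^2=\|\be_l^j\|_0^2+\|\bu^j-P_l\bu^j\|_0^2$, so that
\[
\frac{1}{M}\sum_{j=1}^M\|\bu_l^j-\bu^j\|_0^2=\frac{1}{M}\sum_{j=1}^M\|\be_l^j\|_0^2+\frac{1}{M}\sum_{j=1}^M\|\bu^j-P_l\bu^j\|_0^2 .
\]
The second sum is bounded by $C_{0,P}$ thanks to the first inequality in \eqref{eq:cota_pro_u0}. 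Note this split is exact, which is what keeps the constants in front of $C_{0,P}$ and $\|\be_l^0\|_0^2$ sharp rather than doubled.

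For the first sum I would return to estimate \eqref{eq:error5}, established while proving Theorem~\ref{th_prin}: for $1\le n\le M$,
\[
\|\be_l^n\|_0^2\le\frac{1}{\left(1+\frac{\gamma}{2}\Delta t\right)^n}\|\be_l^0\|_0^2+\Delta t\sum_{j=1}^n\|\btau^j\|_0^2 .
\]
Since $\left(1+\frac{\gamma}{2}\Delta t\right)^{-n}\le\left(1+\frac{\gamma}{2}\Delta t\right)^{-1}$ for $n\ge1$ and every summand $\|\btau^j\|_0^2$ is nonnegative, the right-hand side is bounded, uniformly in $n$, by $\left(1+\frac{\gamma}{2}\Delta t\right)^{-1}\|\be_l^0\|_0^2+\Delta t\sum_{j=1}^M\|\btau^j\|_0^2$. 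Averaging this uniform bound over $n=1,\dots,M$ does not change it, so
\[
\frac{1}{M}\sum_{n=1}^M\|\be_l^n\|_0^2\le\frac{1}{1+\frac{\gamma}{2}\Delta t}\|\be_l^0\|_0^2+\Delta t\sum_{j=1}^M\|\btau^j\|_0^2 .
\]

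It then remains only to identify $\Delta t\sum_{j=1}^M\|\btau^j\|_0^2$, which was already done inside the proof of Theorem~\ref{th_prin}: inserting \eqref{eq:tau1}--\eqref{eq:tauulti} (taken with $n=M$, so that $t_n=T$) into the definition of $\|\btau^n\|_0^2$ shows that $\Delta t\sum_{j=1}^M\|\btau^j\|_0^2$ is bounded by exactly the collection of truncation-error terms on the right-hand side of \eqref{cota_final}, i.e.\ by $TC_{1,P}\bigl(\nu+2\mu+\frac{2}{L}(\|\bu\|_2+C_{\rm ld}+C_{\rm inf})\bigr)+T\beta c_0^2C_{0,P}+\frac{C}{\mu}h^{2(r-1)}\Delta t\sum_{j=1}^M\|p^j\|_{r-1}^2+\frac{C(\Delta t)^2}{L}\int_0^T\|\bu_{tt}(s)\|_0^2\,ds$. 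Adding the $C_{0,P}$ from the projection term and merging it with $T\beta c_0^2C_{0,P}$ into $(T\beta c_0^2+1)C_{0,P}$ yields \eqref{cota_final3}. I do not expect a genuine obstacle here: all the analytic difficulty is concentrated in Theorem~\ref{th_prin}; the only points requiring care are the use of $L^2$-orthogonality and the observation that averaging a bound already uniform in $n$ leaves it intact.
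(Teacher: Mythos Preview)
Your proposal is correct and follows essentially the same route as the paper: average the pointwise bound from Theorem~\ref{th_prin} (the paper averages \eqref{cota_final} directly, you go back one step to \eqref{eq:error5}, which is equivalent) to obtain \eqref{cota_final2}, and then add the projection error via \eqref{eq:cota_pro_u}. The one refinement worth noting is that the paper writes ``applying triangle inequality'' for the last step, whereas you use the Pythagorean identity coming from the $L^2$-orthogonality of $\be_l^j\in{\cal V}^l$ and $(I-P_l)\bu^j\perp{\cal V}^l$; your version is sharper and is in fact what is needed to get the coefficient $(T\beta c_0^2+1)$ in front of $C_{0,P}$ and no factor $2$ in front of $\|\be_l^0\|_0^2$, so the paper is presumably using orthogonality as well despite the phrasing.
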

\begin{proof}
Arguing as in \cite{samuele_pod}, we observe that from \eqref{cota_final} we  get
\begin{eqnarray}\label{cota_final2}
\frac{1}{M}\sum_{j=1}^M\|\be_l^j\|_0^2&\le&{\left(1+\frac{\gamma}{2}\Delta t\right)^{-1}}\|\be_l^0\|_0^2+T\beta c_0^2 C_{0,P}
\nonumber\\&&\ +TC_{1,P}\left(\nu+2\mu+\frac{2}{L}\left(\|\bu\|_2+C_{\rm ld}+C_{\rm inf}\right)\right)\\
&&\ + \frac{C}{\mu} h^{2(r-1)}\Delta t  \sum_{j=1}^M \|p^j\|_{r-1}^2+ \frac{C(\Delta t)^2}{L}\int_0^{T}\|\bu_{tt}(s)\|_0^2~ds,\nonumber
\end{eqnarray}
so that applying triangle inequality together with \eqref{eq:cota_pro_u} we finally reach \eqref{cota_final3}
\end{proof}
\begin{remark}
Let us observe that in the error bound \eqref{cota_final3} we have lost the exponential decay of the initial error since we have taken
the maximum error on the right-hand side of \eqref{cota_final} to reach \eqref{cota_final2} and consequently \eqref{cota_final3}. To avoid
this problem one can apply triangle inequality to \eqref{cota_final} to bound the error $\|\bu_l^n-\bu^n\|_0$. Then, one would have on the
right-hand side of the error bound the term $\|\bu^n-P_j\bu^n\|_0$ for which the rough estimate $\|\bu^n-P_j\bu^n\|_0\le M^{1/2}C_{0,P}^{1/2}$ follows
from \eqref{eq:cota_pro_u}. Assuming an equidistribution of the errors in \eqref{eq:cota_pro_u} (as observed in Remark~\ref{re:apri}) one would
avoid the factor $M^{1/2}$. This is the behavior we observe in practice in the numerical experiments (see Section~\ref{sec:num}) where both the exponential decay of the initial errors is observed together with the absence of the factor $M^{1/2}$ in the error behavior.
\end{remark}
 \begin{remark}
 Accordingly to Remark~\ref{re:apri} we observe that to get the error bounds \eqref{cota_final}  we have applied \eqref{eq:cota_pro_u}. In reference
 \cite{zerfas_et_al} the authors instead of the left-hand side of \eqref{eq:cota_pro_u} they bound $M$ times the left-hand side
 of \eqref{eq:cota_pro_u}.  To this end, instead
 of the correlation matrix  $K=((k_{i,j}))\in {\mathbb R}^{M\times M}$ whith
$
k_{i,j}=(1/M)(\bu_h^i,\bu_h^j),
$
they take
$
k_{i,j}=(\bu_h^i,\bu_h^j),
$
dropping the $1/M$ factor.
Then, the eigenvalues $\lambda_j$ in the error bounds of \cite{zerfas_et_al} are multiplied by $M$ respect to the eigenvalues $\lambda_j$ of the present paper.
\end{remark}

\section{Numerical Experiments}\label{sec:num}

In this section, we present numerical results for the grad-div-DA-ROM \eqref{eq:pod_method} introduced and analyzed in the previous section. The numerical experiments are performed on the benchmark problem of the 2D unsteady flow around a cylinder with circular cross-section \cite{SchaferTurek96} at Reynolds numbers $Re=100,1000$. The open-source FE software FreeFEM \cite{Hecht12} has been used to run the numerical experiments.

\medskip 

{\em Setup for numerical simulations.}
Following \cite{SchaferTurek96}, the computational domain is given by a rectangular channel with a circular hole (see Figure \ref{fig:Mesh} on top for the computational grid used for $Re=100$ and Figure \ref{fig:MeshRef} on top for the computational grid used for $Re=1000$):
$$
\Om=\{(0,2.2)\times(0,0.41)\}\backslash \{\xv : (\xv-(0.2,0.2))^2 \leq 0.05^2\}.
$$
\begin{figure}[htb]
\begin{center}
\centerline{\includegraphics[width=4.75in]{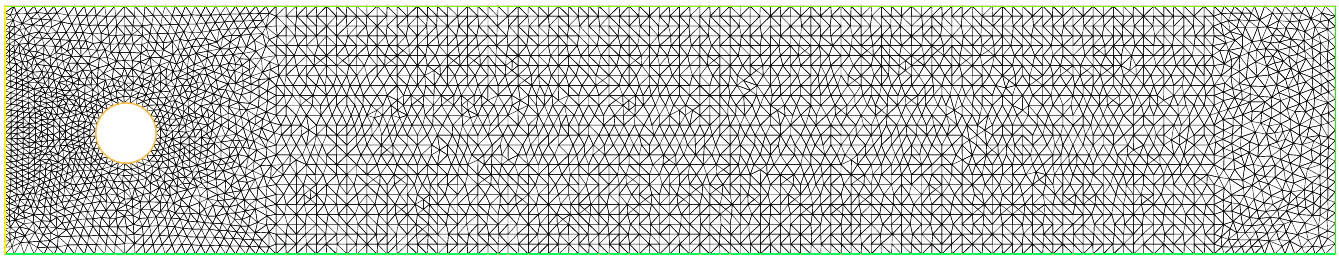}}
\centerline{\includegraphics[width=4.75in]{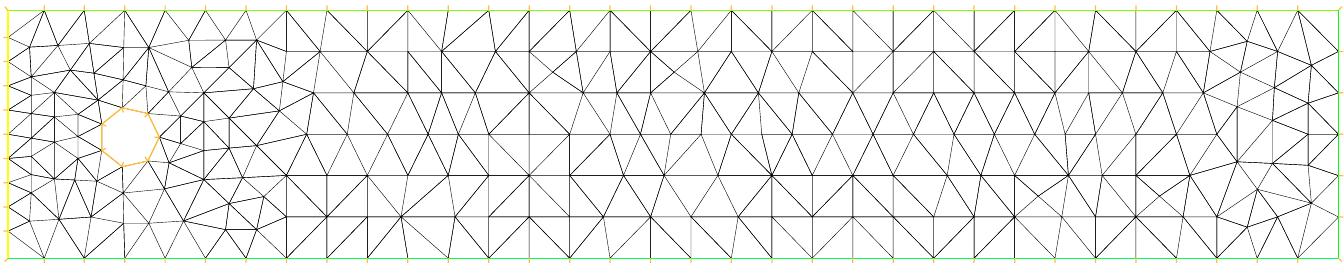}}
\caption{Fine mesh (top) and coarse mesh (bottom), $H=4\,h$, for example \ref{sec:Re=100} (Case $Re=100$).}\label{fig:Mesh}
\end{center}
\end{figure}

\begin{figure}[htb]
\begin{center}
\centerline{\includegraphics[width=4.75in]{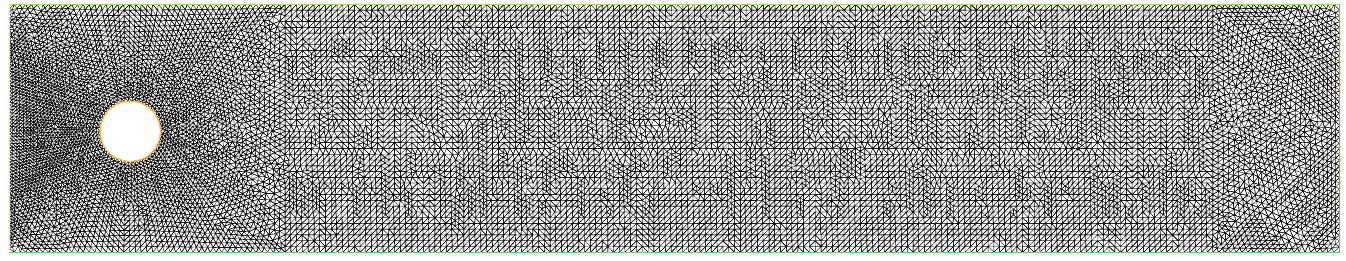}}
\centerline{\includegraphics[width=4.75in]{CoarseMesh}}
\caption{Fine mesh (top) and coarse mesh (bottom), $H=7\,h$, for example \ref{sec:Re=1000} (Case $Re=1000$).}\label{fig:MeshRef}
\end{center}
\end{figure}
No slip boundary conditions are prescribed on the horizontal walls and on the cylinder, and a parabolic inflow profile is provided at the inlet:
$$
\uv(0,y,t)=(4U_{m}y(A-y)/A^2, 0)^{T},
$$
with $U_{m}=\uv(0,H/2,t)=1.5\,\rm{m/s}$, and $A=0.41\,\rm{m}$ the channel height. At the outlet, we impose outflow (do nothing) boundary conditions $(\nu\nabla\uv - p\,Id)\nv={\bf 0}$, with $\nv$ the outward normal to the domain.

We consider two different values of the kinematic viscosity of the fluid: $\nu=10^{-3}, 10^{-4}\,\rm{m^{2}/s}$, and there is no external (gravity) forcing, i.e. $\fv={\bf 0}\,\rm{m/s^2}$. Based on the mean inflow velocity $\overline{U}=2U_{m}/3=1\,\rm{m/s}$, the cylinder diameter $D=0.1\,\rm{m}$ and the different values of the kinematic viscosity of the fluid $\nu=10^{-3}, 10^{-4}\,\rm{m^{2}/s}$, the Reynolds numbers considered are $Re=\overline{U}D/\nu=100, 1000$. In the fully developed periodic regime for the two Reynolds numbers, a vortex shedding can be observed behind the obstacle, resulting in the well-known von K\'arm\'an vortex street (see Figure \ref{fig:FinFEMVel}).

\begin{figure}[htb]
\begin{center}
\centerline{\includegraphics[width=4.75in]{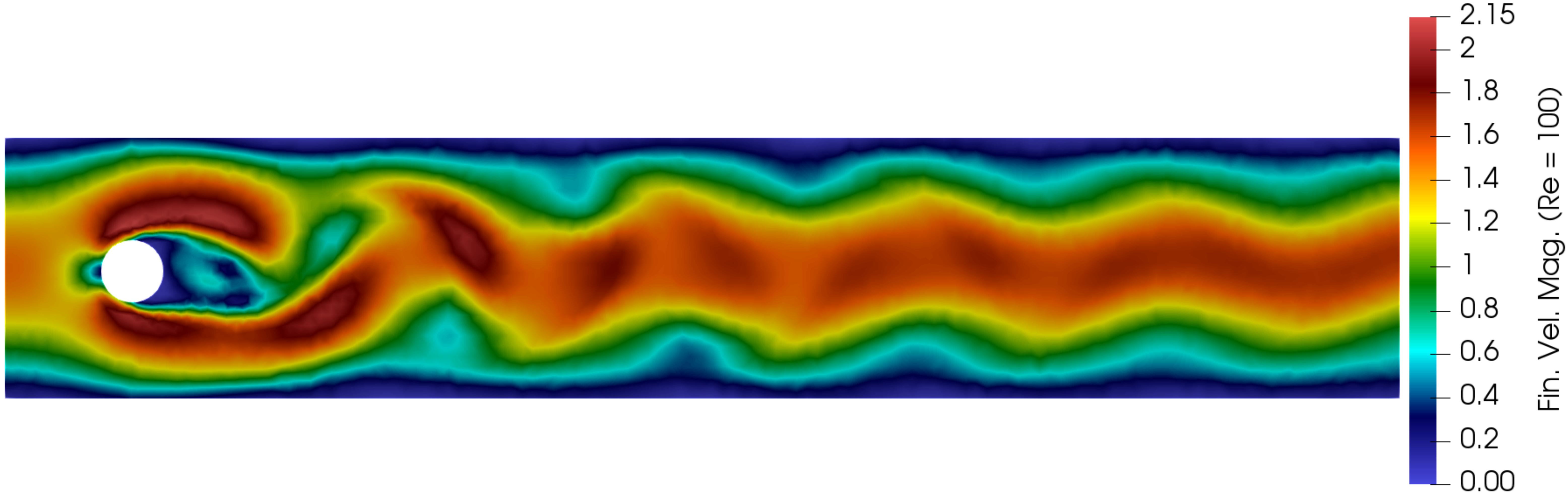}}\vspace{0.5cm}
\centerline{\includegraphics[width=4.75in]{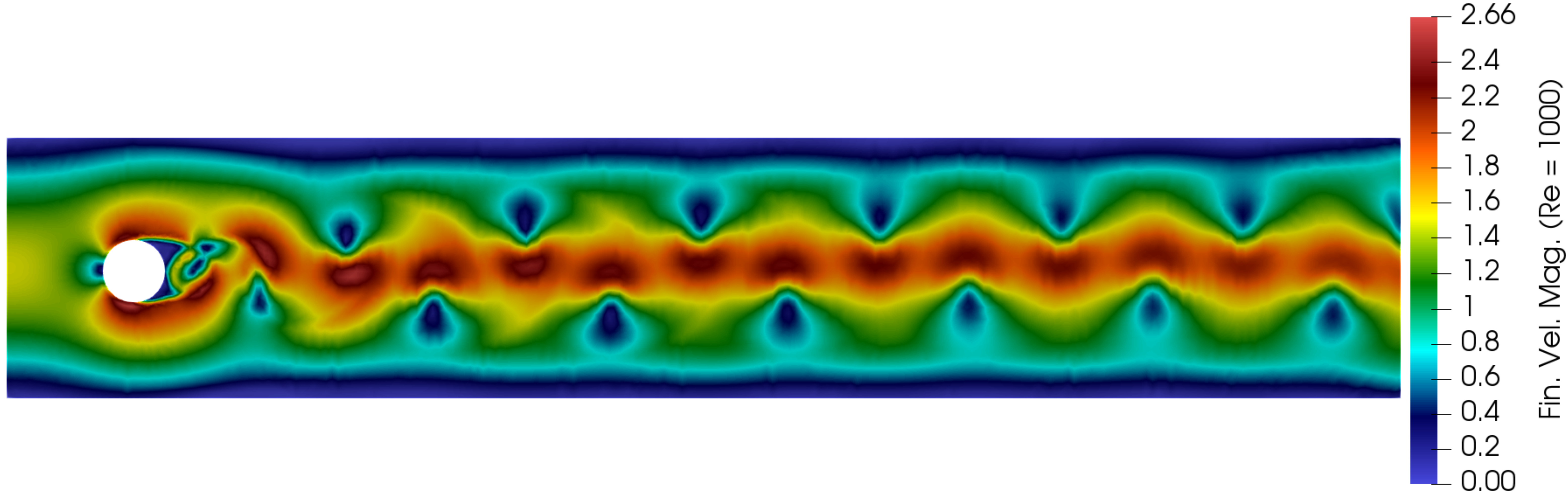}}
\caption{Final finite element DNS velocity magnitude for examples \ref{sec:Re=100} (Case $Re=100$) and \ref{sec:Re=1000} (Case $Re=1000$), from top to bottom.}\label{fig:FinFEMVel}
\end{center}
\end{figure}

For the evaluation of computational results, we are interested in studying the temporal evolution of the following
quantities of interest. The kinetic energy of the flow is the most frequently monitored quantity, given by:
$$
E_{Kin}=\frac{1}{2}\nor{\uv}{{\bf L}^2}^2.
$$
Other relevant quantities of interest are the drag and lift coefficients. In order to reduce the boundary approximation influences, in the present work these quantities are computed as volume integrals \cite{John04paper}:
$$
c_{D}=-\frac{2}{D\overline{U}^2}\left[(\partial_t \uv,\vv_D)+b(\uv,\uv,\vv_D)+\nu(\nabla\uv,\nabla\vv_D)-(p,\div\vv_D)\right],
$$
$$
c_{L}=-\frac{2}{D\overline{U}^2}\left[(\partial_t \uv,\vv_L)+b(\uv,\uv,\vv_L)+\nu(\nabla\uv,\nabla\vv_L)-(p,\div\vv_L)\right],
$$
for arbitrary test functions $\vv_{D},\vv_{L}\in {\bf H}^{1}$ such that $\vv_{D}=(1,0)^{T}$ on the boundary of the cylinder and vanishes on the other boundaries, $\vv_{L}=(0,1)^{T}$ on the boundary of the cylinder and vanishes on the other boundaries. In the actual computations, we have used the approach in \cite{zerfas_et_al}, where the pressure term is not necessary to compute $c_{D}, c_{L}$, since the test functions $\vv_D,\vv_L$ are computed by Stokes projection, so that they are taken properly in the discrete divergence-free space $V_{h,r}$. For the lower Reynolds number case ($Re=100$), reference intervals for these coefficients were given in \cite{SchaferTurek96} (see second row of Table \ref{tab:DragLiftCoef}), together with the Strouhal number $St=Df/\overline{U}$, where $f$ is the frequency of the vortex shedding. For the higher Reynolds number case ($Re=1000$), we will take the computed finite element DNS drag and lift coefficients as reference values.

\begin{table}[htb]
$$\hspace{-0.1cm}
\begin{tabular}{|c|c|c|c|}
\hline
 & $c_{D}^{max}$ & $c_{L}^{max}$ & $St$\\
\hline
DNS & $3.22$ & $0.96$ & $0.303$\\
\hline
Reference results from \cite{SchaferTurek96} & $[3.22,3.24]$ & $[0.99,1.01]$ & $[0.295,0.305]$\\
\hline
\end{tabular}$$\caption{Maximum drag coefficient $c_{D}^{max}$, maximum lift coefficient $c_{L}^{max}$, and Strouhal number for the finite element DNS solution (first row), compared with reference intervals from \cite{SchaferTurek96} (second row) for example \ref{sec:Re=100} (Case $Re=100$).}\label{tab:DragLiftCoef}
\end{table}

\medskip 

{\em DNS-FEM and POD modes.}
The numerical method used to compute the snapshots is the DNS-FEM \eqref{eq:gal} described in Section \ref{sec:FOM}, with a spatial discretization using the mixed inf-sup stable ${\bf P}^{2}-\mathbb{P}^1$ Taylor-Hood FE for the pair velocity-pressure. For the time discretization, a semi-implicit Backward Differentiation Formula of order 2 (BDF2) has been applied, which guarantees a good balance between numerical accuracy and computational complexity (cf. \cite{AhmedRubino19}). In particular, we have considered an extrapolation for the convection velocity by means of Newton--Gregory backward polynomials \cite{Cellier91}. Without entering into the details of the derivation, for which we refer the reader to e.g. \cite{Cellier91}, we consider the following extrapolation of order two for the discrete velocity: $\widehat{\uv}_{h}^{n}=2\uhv^{n}-\uhv^{n-1}$, $n\geq 1$, in order to achieve a second-order accuracy in time. For the initialization $(n=0)$, we have considered $\uhv^{-1}=\uhv^{0}=\uv_{0h}$, being $\uv_{0h}$ the initial condition, so that the time scheme reduces to the semi-implicit Euler method for the first time step $(\Delta t)^{0}=(2/3)\Delta t$. In the DNS, an impulsive start is performed, i.e. the initial condition is a zero velocity field, and the time step is $\Delta t = 2\times 10^{-3}\,\rm{s}$. Time integration is performed till a final time $T=7\,\rm{s}$. In the time period $[0,5]\,\rm s$, after an initial spin-up, the flow is expected to develop to full extent, including a subsequent relaxation time. Afterwards, it reaches a periodic-in-time (statistically- or quasi-steady) state, see Figure \ref{fig:Energy}, where we plot kinetic energy temporal evolution for the DNS solutions at Reynolds numbers $Re=100, 1000$. From Table \ref{tab:DragLiftCoef}, we observe that DNS results at $Re=100$ agree quite well with reference results from \cite{SchaferTurek96}.

\begin{figure}[htb]
\begin{center}
\centerline{\includegraphics[width=5in]{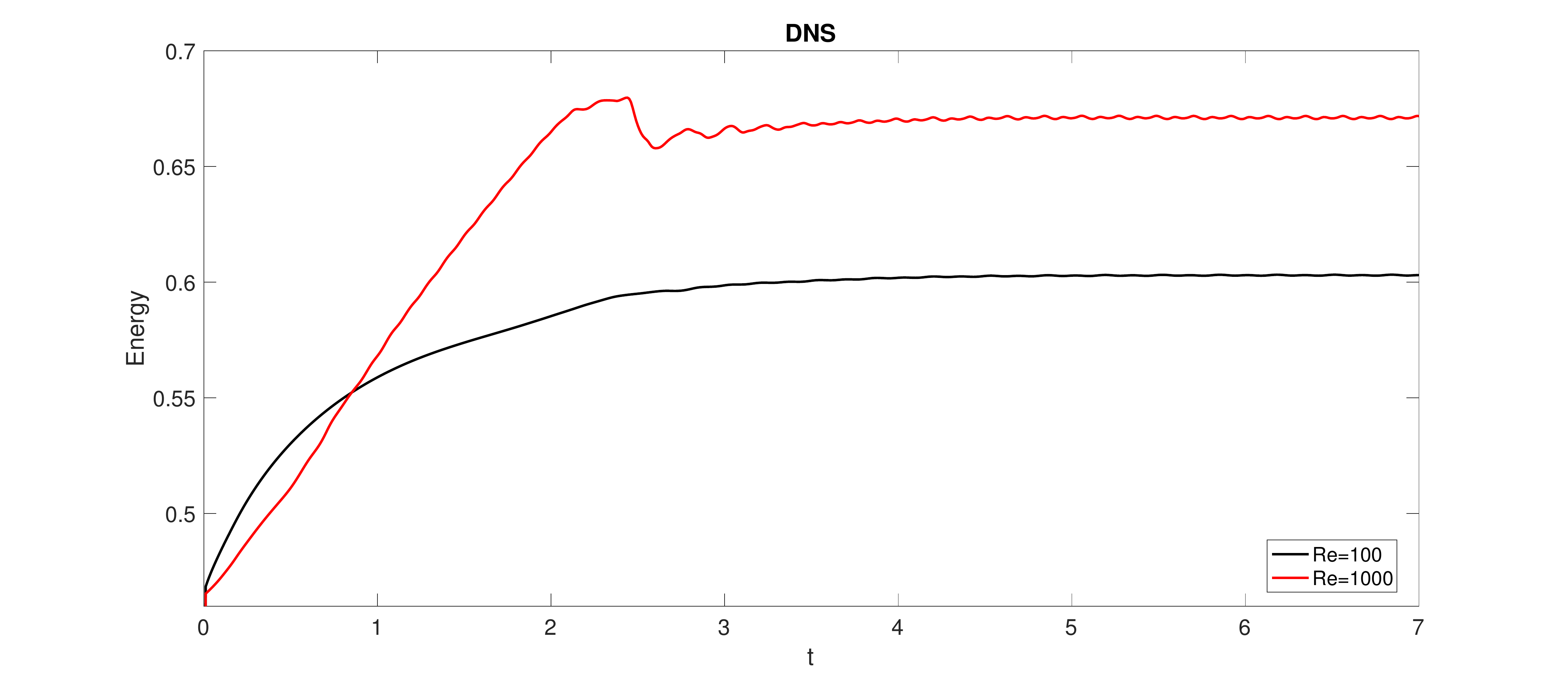}}
\caption{Temporal evolution of kinetic energy for the DNS solution computed for examples \ref{sec:Re=100} (Case $Re=100$) and \ref{sec:Re=1000} (Case $Re=1000$).}\label{fig:Energy}
\end{center}
\end{figure}

The POD velocity modes are generated in $L^2$ by the method of snapshots with velocity centered-trajectories \cite{IliescuJohn15} by storing every DNS velocity solution from $t=5$, when the solution had reached a periodic-in-time state, and using one period of snapshot data for the two Reynolds numbers $Re=100, 1000$. The full period length of the statistically steady state is, respectively, $0.332\,\rm{s}$ for $Re=100$ and $0.22\,\rm{s}$ for $Re=1000$, thus we collect $166$ snapshots for $Re=100$ and $110$ snapshots for $Re=1000$. The rank of the velocity data set at $Re=100, 1000$ is, respectively, $d_{p}=27, 51$, for which $\lambda_k<10^{-10}, k>d_{p}$, see Figure \ref{fig:PODev} where we show the decay of POD velocity eigenvalues $\lambda_k$, $k=1,\ldots, d_p$, for the two Reynolds numbers $Re=100, 1000$.

\begin{figure}[htb]
\begin{center}
\centerline{\includegraphics[width=5in]{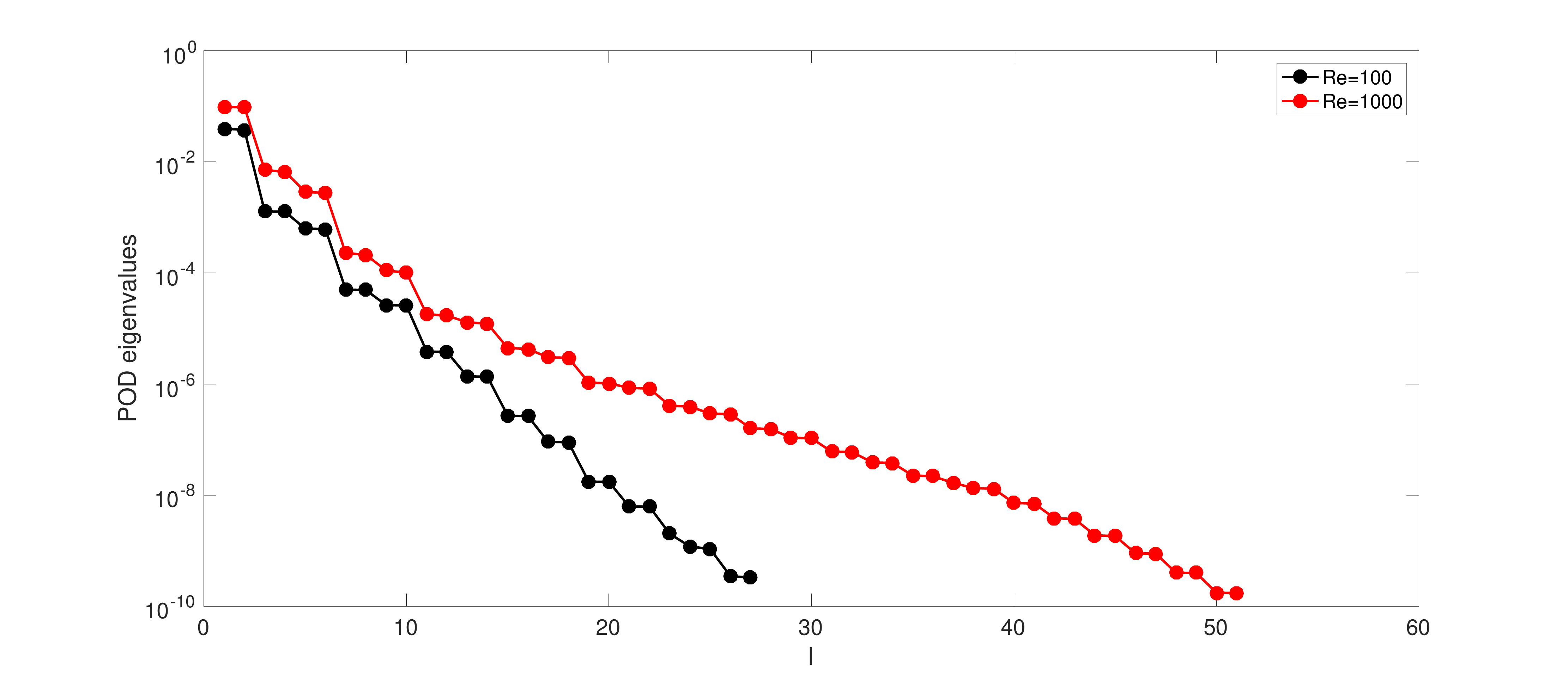}}
\caption{POD velocity eigenvalues for examples \ref{sec:Re=100} (Case $Re=100$) and \ref{sec:Re=1000} (Case $Re=1000$).}\label{fig:PODev}
\end{center}
\end{figure}

\medskip 

{\em Numerical results for grad-div-DA-ROM.}
With POD velocity modes genera\-ted, the fully discrete grad-div-DA-ROM \eqref{eq:pod_method} is constructed as discussed in the previous section, using the semi-implicit BDF2 time scheme as for the DNS-FEM, and run with varying values of the nudging parameter ($\beta=10, 100, 500$) in the stable response time interval $[5,7]\,\rm{s}$ with $\Delta t=2\times 10^{-3}\,\rm{s}$ and a small number ($l=8$) of POD velocity modes, which already give a reasonable accuracy for the proposed method at Reynolds numbers $Re=100, 1000$, especially for large values of the nudging parameter ($\beta=100, 500$). The coarse mesh for grad-div-DA-ROM is given by the same computational grid for the two Reynolds numbers, represented at the bottom of Figures \ref{fig:Mesh}, \ref{fig:MeshRef}. For $Re=100$ this coarse mesh corresponds to $H=4\,h$, while for $Re=1000$ it corresponds to $H=7\,h$, being $H$ the resolution of the coarse spatial mesh, and $h$ the one of the used fine spatial computational grid. In the current implementation, since $H/h$ is bounded, $I_H$ has been chosen as the nodal Lagrange interpolation operator onto the coarse mesh of size $H$, for which error bounds have been proven in \cite{bosco_y_yo, bosco_titi_yo}. A numerical comparison with respect to an interpolation operator on piecewise constants \cite{bosco_y_yo, bosco_titi_yo} gave almost similar results (not shown for brevity). For the grad-div-DA-ROM computations, we start from zero initial velocity conditions at $t=5\,\rm{s}$ and begin assimilation with the DNS solution at $t=5.002\,\rm{s}$, whereas $I_{H}\uv^{n}$ is computed only in one period and then repeated in the rest of periods, thus being the DNS data to construct the reduced basis sufficient to implement the DA term, and no further information is needed. In the following numerical experiments, we observe that the grad-div-DA-ROM solution exponentially converges to the DNS solution in time and the speed of convergence grows as we increase the nudging parameter $\beta$.

To assess the numerical accuracy of the new grad-div-DA-ROM, the temporal evolution of the drag and lift coefficients, and kinetic energy are monitored and compared to the DNS solutions in the stable response time interval $[5,7]\,\rm{s}$. Following \cite{zerfas_et_al}, we also investigate the new grad-div-DA-ROM in predicting the cited quantities of interest when inaccurate snapshots ($64\%$ of one full period) are used in its construction. The interest of this numerical investigation relies on the fact that, in practice, complete sets of data are usually not available, or the quantity of data needed to reasonably catch up the behavior of the real solution is usually unknown. This also allows to reduce the offline computational cost of the method, since a reduced number of snapshots is used to build the correlation matrix, while almost maintaining the numerical accuracy of comp\-lete data sets simulations. At the same time, we compare the performance of the grad-div-DA-ROM to that of the standard Galerkin-ROM (G-ROM), for which $\mu=0$ and $\beta=0$, the grad-div-ROM, for which $\beta=0$, and the DA-ROM, for which $\mu=0$. The DA-ROM has been introduced and analyzed in \cite{zerfas_et_al}. To perform the comparison, here we run it with the same numerical setup as for the grad-div-DA-ROM. From the following numerical experiments, we observe that under the same setup conditions, both DA reduced order methods tested gave almost similar reliable results. In terms of computational cost, note that the CPU time of all the ROM tested is at least three orders of magnitude lower than the CPU time of the DNS-FEM, thus proving their computational efficiency.

Of particular interest is also the comparison of the G-ROM and the grad-div-ROM. For these methods, the initial velocity condition at $t=5\,\rm{s}$ is taken as the $L^2$-orthogonal projection of the DNS solution onto ${\cal V}^l$. The rest of the numerical setup is the same as for the DA reduced order methods tested. In the following numerical experiments, we notice that, whereas the G-ROM solution is totally inaccurate, the application of the grad-div stabilization term already helps to improve the G-ROM solution, allowing to compute a solution with reasonable accuracy, especially at Reynolds number $Re=100$. However, for the higher Reynolds number $Re=1000$, both DA reduced order methods tested outperform both G-ROM and grad-div-ROM, especially for large values of the nudging parameter, thus supporting the performed numerical analysis, in which we do not need to assume at all an upper bound on the nudging parameter. In these case, the grad-div-ROM should be combined with convection stabilization (e.g., SUPG \cite{Brooks82} or LPS \cite{IMAJNA, BB01, Chacon13, lps_nos}) in order to obtain more accurate results, but this falls outside the scope of the present work. Nevertheless, up to our knowledge, this is the first time that the grad-div-ROM is numerically investigated as itself. Indeed, although the grad-div stabilization term has been already considered e.g. in \cite{BergmannIollo09, IliescuJohn14} within a ROM framework, actually in \cite{BergmannIollo09} it has been embedded within a residual-based VMS \cite{ARCME, Bazilevs07a} method, thus making difficult to understand its real contribution, while in \cite{IliescuJohn14} it has been neglected in the numerical studies. However, we found convenient to add it to the G-ROM in the present numerical experiments. Indeed, this term generally provides improvement of local discrete mass conservation \cite{Linke09, Lube09}, and thus it is particularly important in the present framework, in which mixed interpolations that sa\-tisfy the inf-sup condition but are not exactly divergence-free have been used to compute the snapshots. This allows to work with only velocity ROM, as in this case, since the POD velocity modes are solenoidal and the pressure term drops out, but could lead to a poor resolution, as the G-ROM results confirm. We emphasize again that when considering DA into the ROM, thus adding or not the grad-div stabilization term makes no significant difference, as showed in the following numerical experiments.

%%%=====================================================
%%%=====================================================

\subsection{Case $Re=100$}\label{sec:Re=100}

In this section, we discuss results for $Re=100$. In this case, we have used the computational grid represented in Figure \ref{fig:Mesh} on top to compute the snapshots, for which $h=2.76\times 10^{-2}\,\rm{m}$, resulting in $32\,488$ d.o.f. for velocities and $4\,151$ d.o.f. for pressure. Also, $166$ snapshots were collected, which comprise one full period from $t=5\,\rm{s}$ to $t=5.332\,\rm{s}$. All tested ROM have been run in the stable response time interval $[5,7]\,\rm{s}$, corresponding to six periods for the lift coefficient. Thus, we are actually testing the ability of the considered ROM to predict/extrapolate in time, monitoring their performance over a six times larger time interval with respect to the one used to compute the snapshots and generate the POD modes. This will show how the strategy to incorporate DA into the ROM can provide long time stability and accuracy, thus proving its robustness.

Numerical results for energy, drag and lift predictions using $l=8$ modes are shown in Figures \ref{fig:QOI}, \ref{fig:QOIda}, \ref{fig:QOIgdDA}. In particular, Figure \ref{fig:QOI} shows a comparison within DNS, G-ROM, grad-div-ROM with $\mu=0.15$, DA-ROM with $\beta=10$, and grad-div-DA-ROM with $\mu=0.15$ and $\beta=10$. The value $\mu=0.15$ for the grad-div stabilization term has been fixed minimizing the error with respect to the DNS energy. From this figure, we observe that, whereas the G-ROM solution is totally inaccurate, the application of the grad-div stabilization term greatly improves the G-ROM solution, allowing to compute rather accurate quantities of interest. Indeed, the temporal evolution of the kinetic energy and lift coefficient is very close to that of the DNS, being the drag coefficient temporal evolution the most sensitive quantity presenting larger differences. A slight improvement is observed for using DA with $\beta=10$, being results for DA-ROM and grad-div-DA-ROM almost identical. Note that using DA, since we started from zero initial velocity conditions, the DNS results are approached around $t=5.4\,\rm{s}$ with $\beta=10$. 

A significant improvement is observed by increasing the nudging parameter $\beta$ for DA reduced order methods. This is clearly displayed in Figures \ref{fig:QOIda}, \ref{fig:QOIgdDA}, which respectively show the behavior of the DA-ROM and the grad-div-DA-ROM, varying the nudging parameter $\beta$ from $10$ to $500$. Again, almost identical results are obtained with both DA reduced order methods, for which the best predictions are given by the largest values $\beta=500$ of the nudging parameter, although we observe a similar accuracy already for $\beta=100$. Note also that for large values of the nudging parameter ($\beta=100, 500$), although we started from zero initial velocity conditions, the DNS results are approached with a rather accurate resolution just after very few iterations (around $20$, i.e. $0.04\,\rm{s}$, for $\beta=100$ and $5$, i.e. $0.01\,\rm{s}$, for $\beta=500$). All these results are also confirmed by Table \ref{tab:ErrLevComp}, where we display the error levels with respect to DNS of maximum kinetic energy $|E_{kin,l}^{max}-E_{kin,DNS}^{max}|$, maximum drag coefficient $|c_{D,l}^{max}-c_{D,DNS}^{max}|$, maximum lift coefficient $|c_{L,l}^{max}-c_{L,DNS}^{max}|$, and velocity norm $\nor{\uv_{l}-\uv_{DNS}}{\ell^2({\bf L}^2)}$ using $l=8$ modes for G-ROM, grad-div-ROM ($\mu=0.15$), DA-ROM ($\beta=500$), and grad-div-DA-ROM ($\mu=0.15, \beta=500$) in the time interval $[5.01,7]\,\rm{s}$. Note how grad-div-ROM already reduces the error level in $E_{kin}^{max}$ of three orders of magnitude with respect to G-ROM, similarly to both DA reduced order methods, and in $c_{L}^{max}$ of one order of magnitude, while both DA reduced order methods of two orders of magnitude. However, for $c_{D}^{max}$, while grad-div-ROM slightly reduces the error level with respect to G-ROM (five times), both DA reduced order methods guarantee again a reduction of two orders of magnitude. In terms of $\ell^2({\bf L}^2)$ velocity norm, both DA reduced order methods reduces the G-ROM error level eight times, while the grad-div-ROM is just slightly better accurate than G-ROM.

\begin{figure}[htb]
\begin{center}
\centerline{\includegraphics[width=5.in]{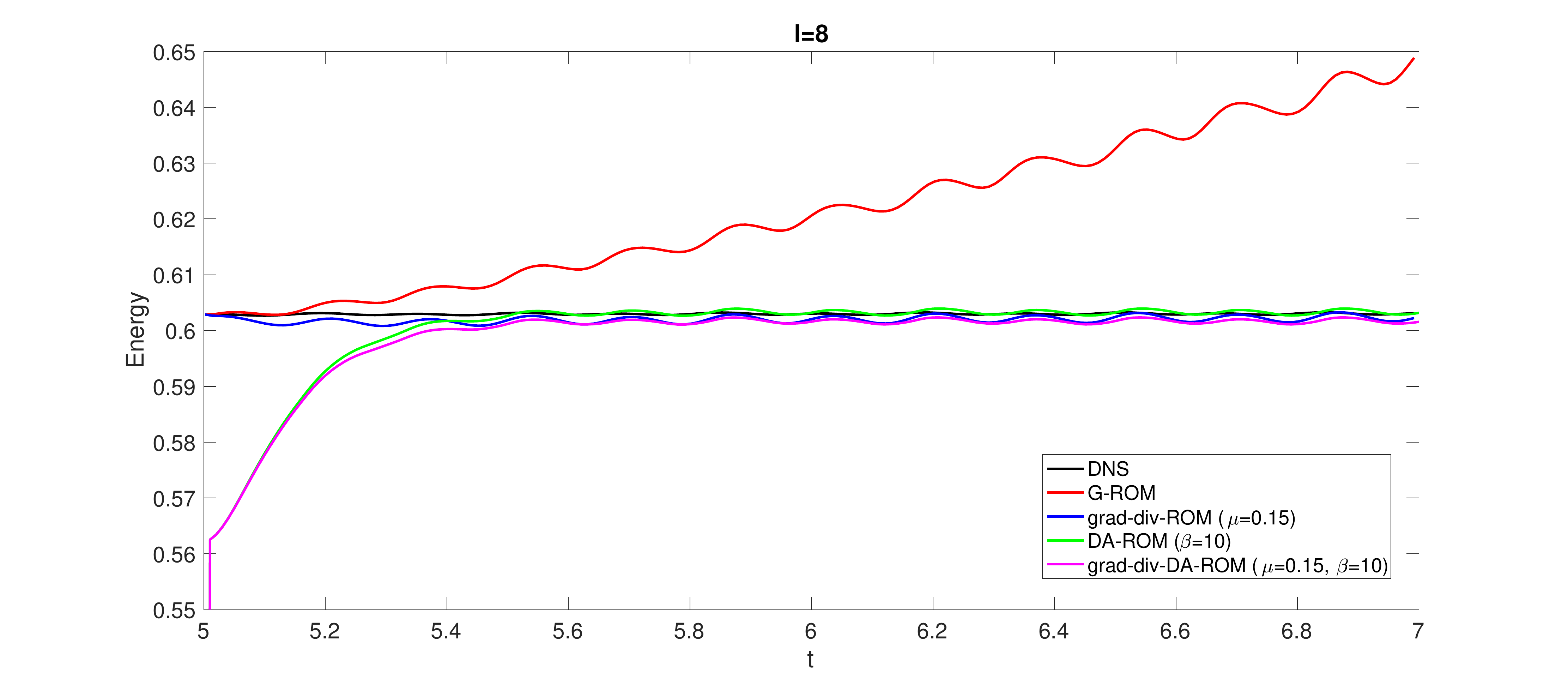}}
\centerline{\includegraphics[width=5.in]{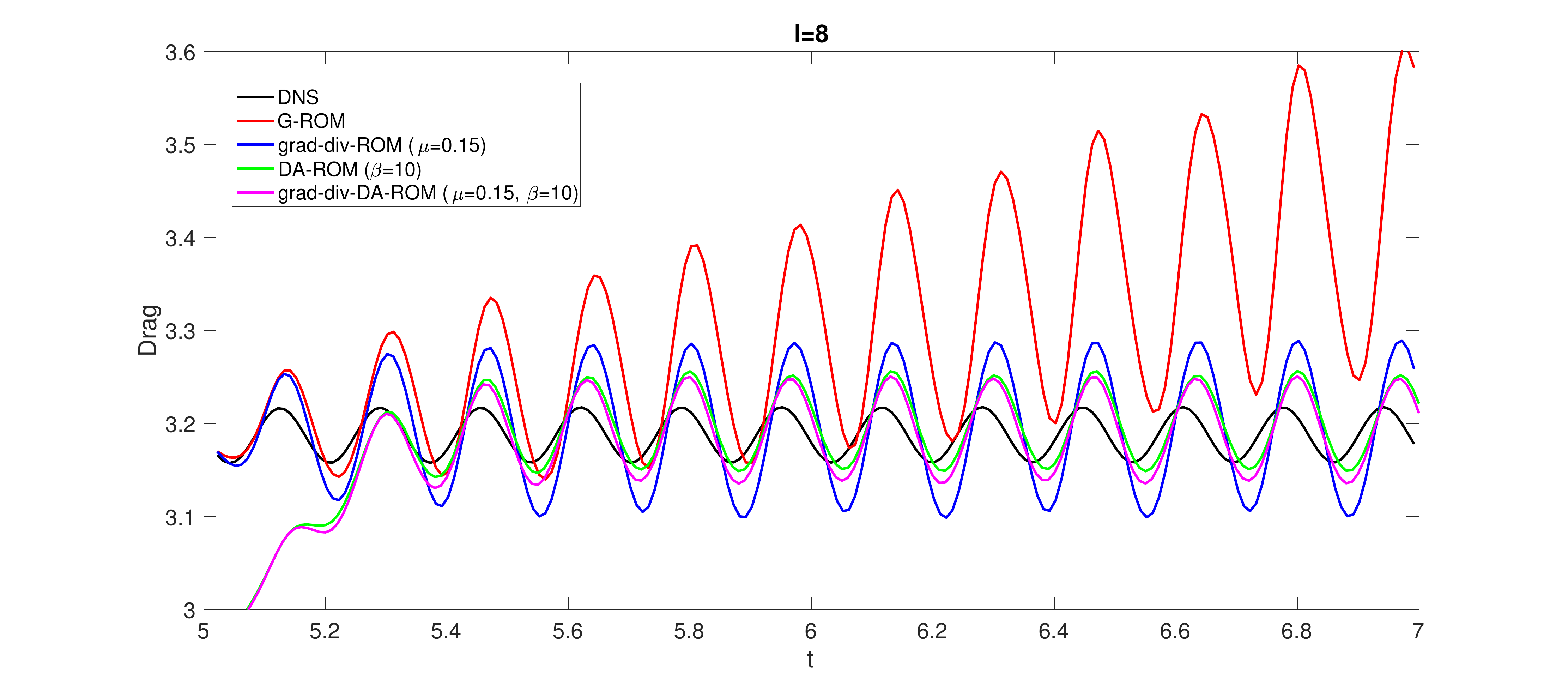}}
\centerline{\includegraphics[width=5.in]{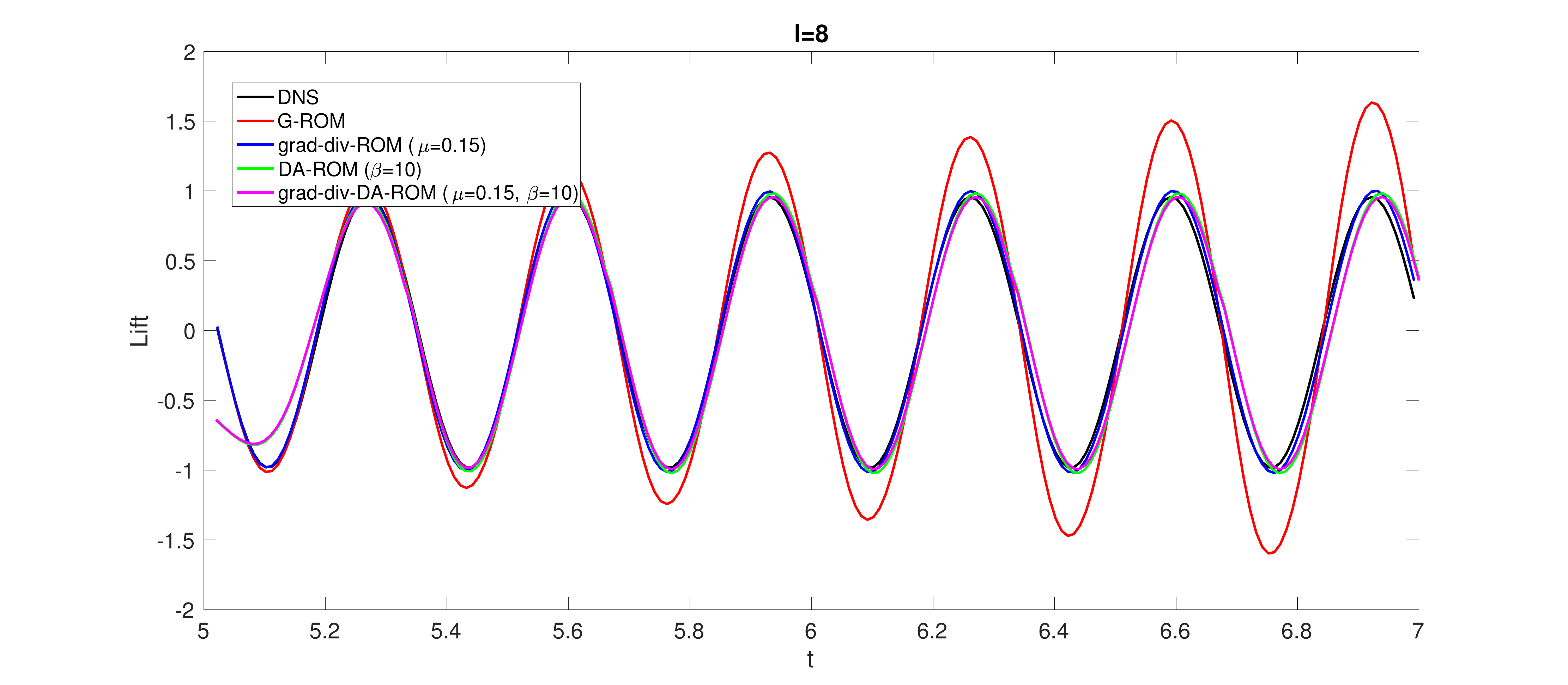}}
\caption{Example \ref{sec:Re=100} (Case $Re=100$): Temporal evolution of kinetic energy, drag coefficient and lift coefficient using $l=8$ modes ($166$ snapshots used, which comprise one full period from $t=5\,\rm{s}$ to $t=5.332\,\rm{s}$).}\label{fig:QOI}
\end{center}
\end{figure}

\begin{figure}[htb]
\begin{center}
\centerline{\includegraphics[width=5.in]{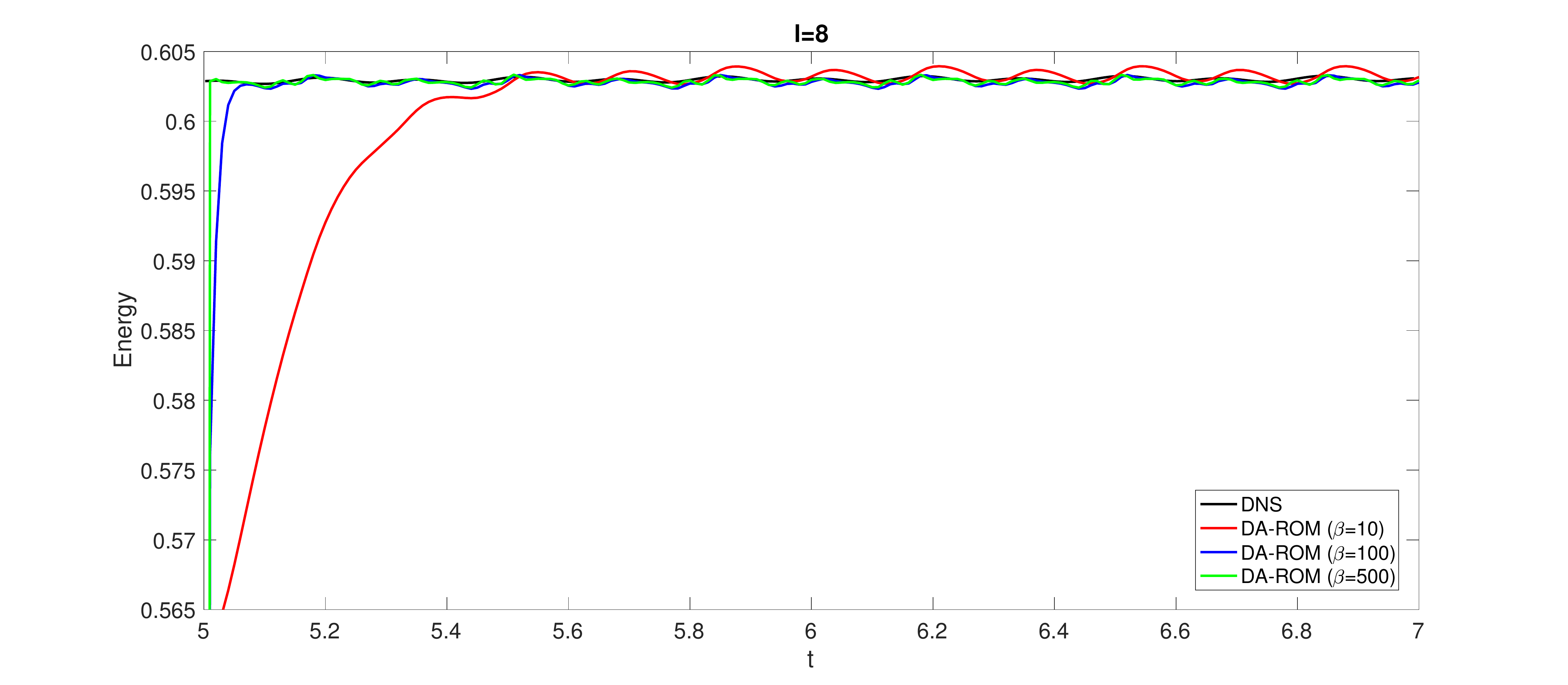}}
\centerline{\includegraphics[width=5.in]{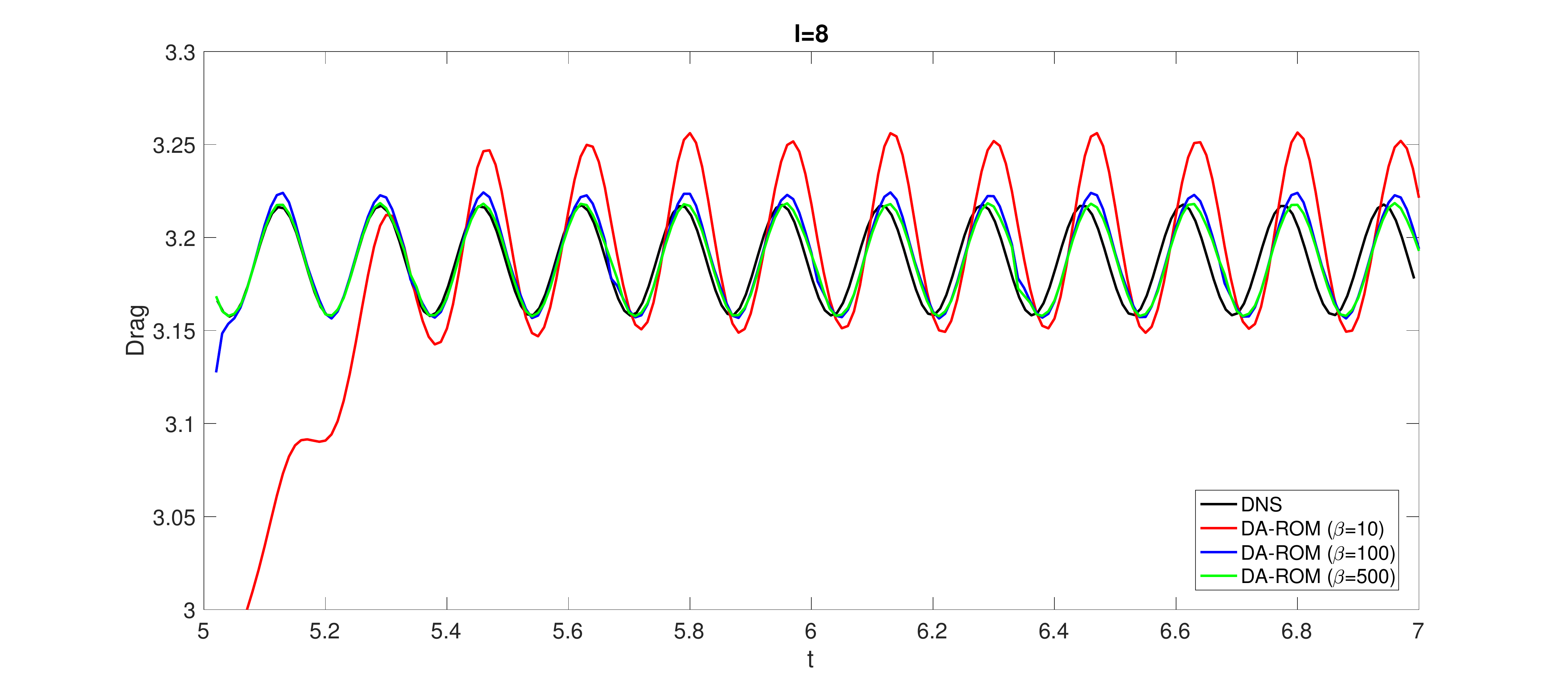}}
\centerline{\includegraphics[width=5.in]{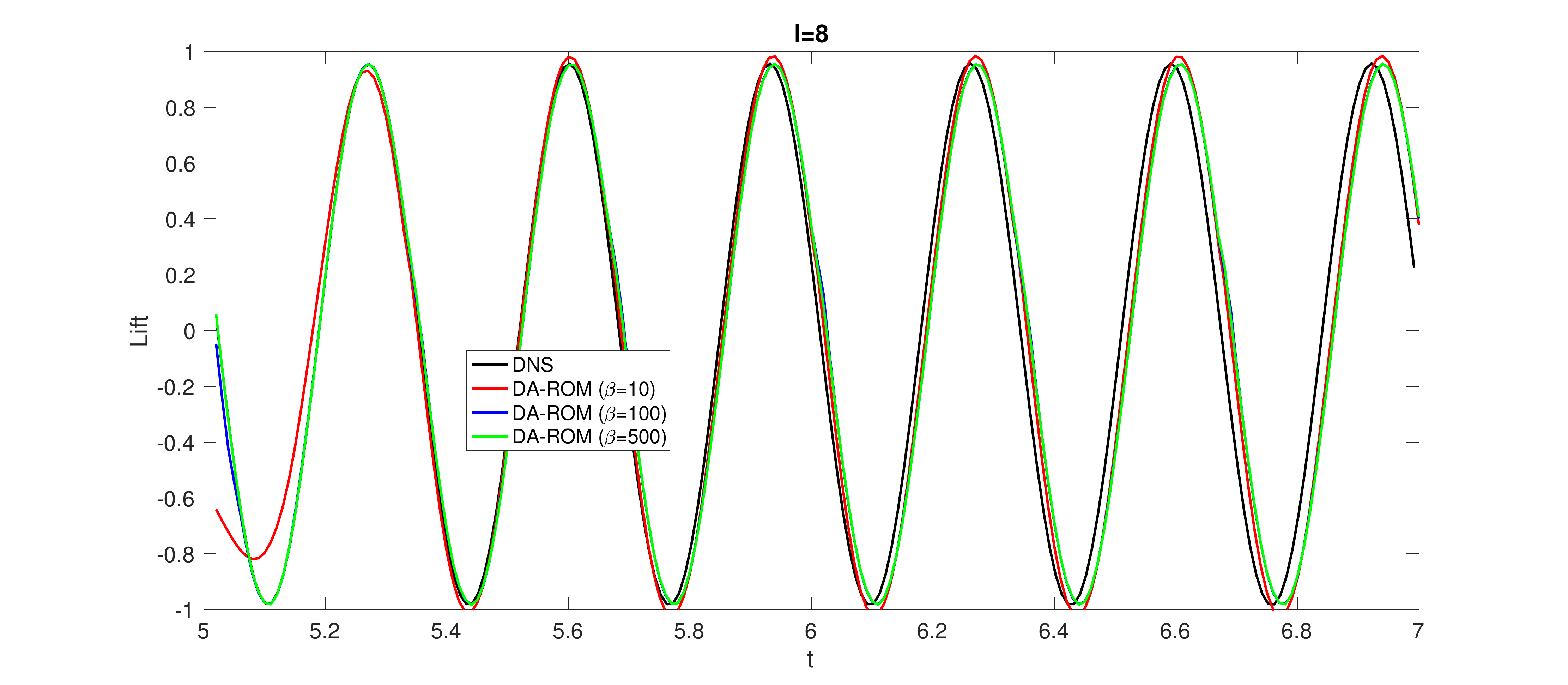}}
\caption{Example \ref{sec:Re=100} (Case $Re=100$): Temporal evolution of kinetic energy, drag coefficient and lift coefficient using $l=8$ modes for DA-ROM with $\beta=10,\,100,\,500$ ($166$ snapshots used, which comprise one full period from $t=5\,\rm{s}$ to $t=5.332\,\rm{s}$).}\label{fig:QOIda}
\end{center}
\end{figure}

\begin{figure}[htb]
\begin{center}
\centerline{\includegraphics[width=5.in]{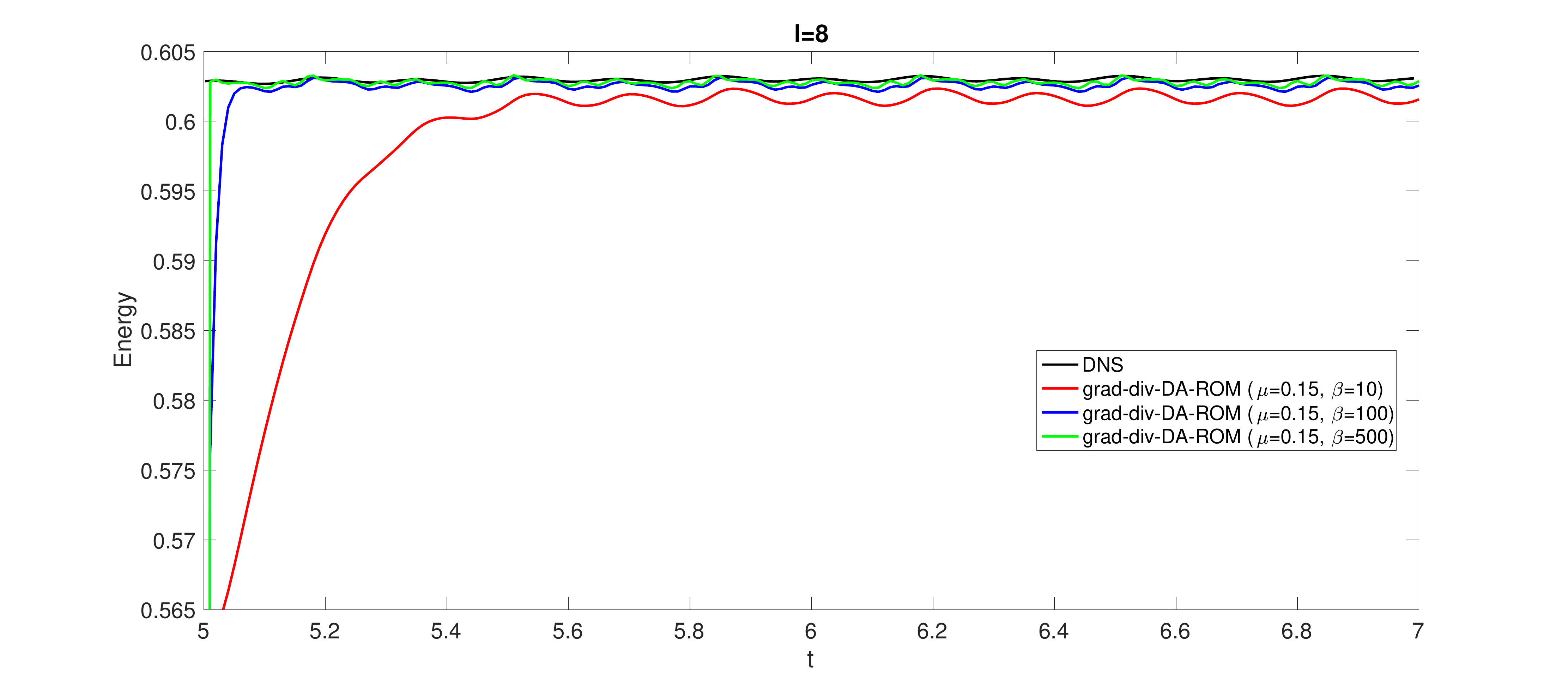}}
\centerline{\includegraphics[width=5.in]{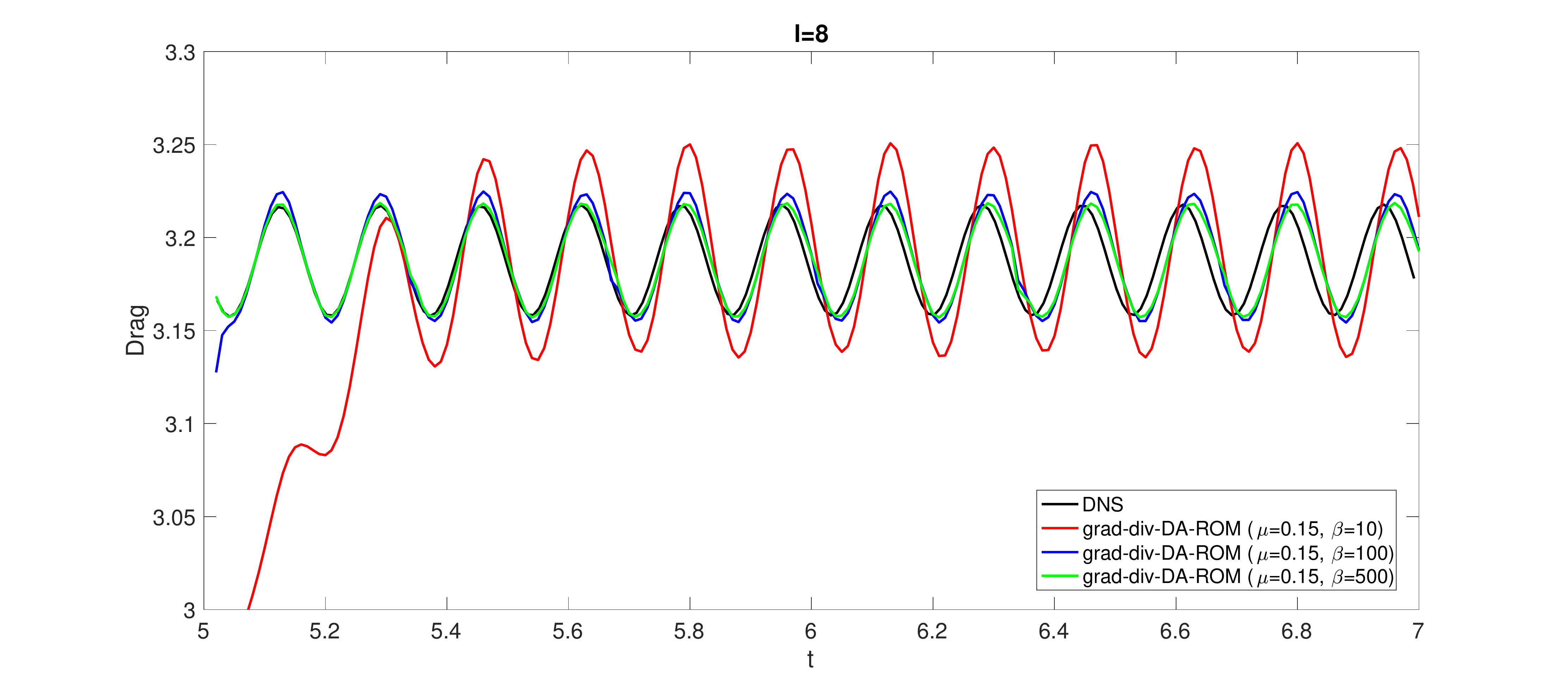}}
\centerline{\includegraphics[width=5.in]{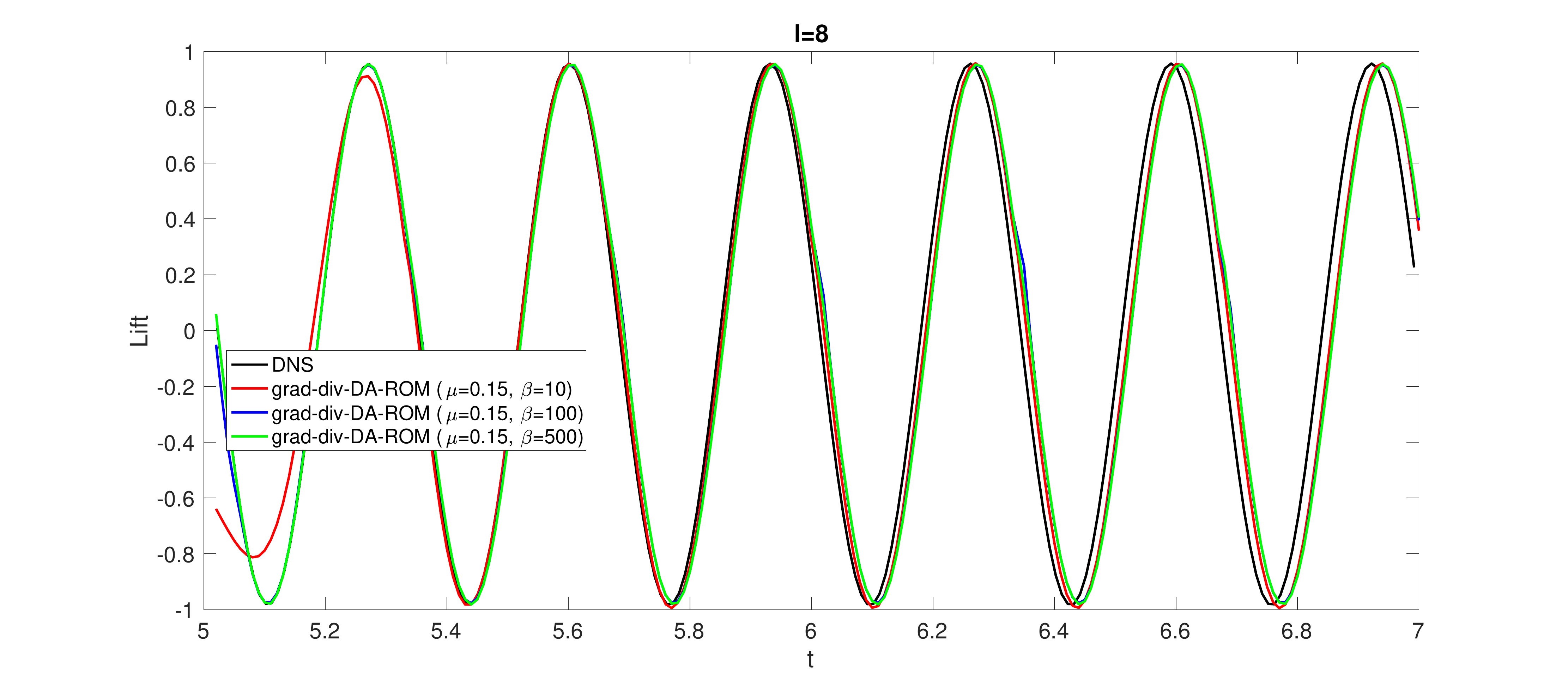}}
\caption{Example \ref{sec:Re=100} (Case $Re=100$): Temporal evolution of kinetic energy, drag coefficient and lift coefficient using $l=8$ modes for grad-div-DA-ROM with $\mu=0.15$ and $\beta=10,\,100,\,500$ ($166$ snapshots used, which comprise one full period from $t=5\,\rm{s}$ to $t=5.332\,\rm{s}$).}\label{fig:QOIgdDA}
\end{center}
\end{figure}

\begin{table}[htb]
$$\hspace{-0.1cm}
\begin{tabular}{|c|c|c|c|c|}
\hline
 & \multicolumn{4}{|c|}{$Re=100$}\\
\hline
Errors & G-ROM & grad-div-ROM & DA-ROM & grad-div-DA-ROM\\
\hline
$E_{kin}^{max}$ & 4.56e-02 & 1.60e-05 & 8.20e-05 & 4.30e-05\\
\hline
$c_{D}^{max}$ & 3.84e-01 & 7.15e-02 & 2.72e-03 & 2.87e-03\\
\hline
$c_{L}^{max}$ & 6.78e-01 & 4.33e-02 & 4.18e-03 & 4.76e-03\\
\hline
$\ell^2({\bf L}^2)\, \uv$ norm & 1.68e-01 & 9.73e-02 & 2.04e-02 & 2.04e-02\\
\hline
\end{tabular}$$\caption{Example \ref{sec:Re=100} (Case $Re=100$): Errors levels with respect to DNS for G-ROM, grad-div-ROM ($\mu=0.15$), DA-ROM ($\beta=500$), and grad-div-DA-ROM ($\mu=0.15, \beta=500$) ($166$ snapshots used, which comprise one full period from $t=5\,\rm{s}$ to $t=5.332\,\rm{s}$).}\label{tab:ErrLevComp}
\end{table}

We also investigate the considered ROM performances in predicting quantities of interest when inaccurate snapshots ($64\%$ of one full period) are used in their construction. Thus, we generate inaccurate snapshots using $64\%$ of one full period of DNS data, which corresponds in this case to the first $106$ DNS time step solutions from $t=5\,\rm{s}$ to $t=5.212\,\rm{s}$. Figure \ref{fig:PODvelmodes} displays the Euclidean norm of the first POD velocity modes obtained with the full set of snapshots (left) and the inaccurate set of snapshots (right). Results for the considered ROM using $l=8$ modes in this case are shown in Figures \ref{fig:QOIInac1}, \ref{fig:QOIdaInac1}, \ref{fig:QOIgdDAInac1}. Similar to the previous results, DA significantly improves the accuracy of the G-ROM, especially for large values of the nudging parameter, without the need to increase the number of reduced basis functions. While results for G-ROM becomes more and more inaccurate as time goes on, results for grad-div-ROM remain still acceptable if compared with DA reduced order methods for a small value of the nudging parameter. Again, results for both DA-ROM (with and without grad-div term) are very close and almost approaches DNS results for large values of the nudging parameter. Actually, they are almost comparable to previous results for one full period of DNS data. All these considerations are also reflected by the error levels displayed in Table \ref{tab:ErrLevCompInac1}. These results suggest that, despite its simple implementation, DA can greatly improve the overall accuracy of the standard G-ROM in the computation of quantities of interest even when low-resolution data are available to construct the reduced basis, which is common in practice, whereas grad-div stabilization (without DA) continues providing reliable results. We notice, however, that as the Reynolds number is increased (see next section), results for grad-div-ROM (without DA) are less accurate, and maybe it should be combined with convection stabilization if one does not use DA in order to obtain more accurate results.

\begin{figure}[htb]
\begin{center}
\includegraphics[width=2.385in]{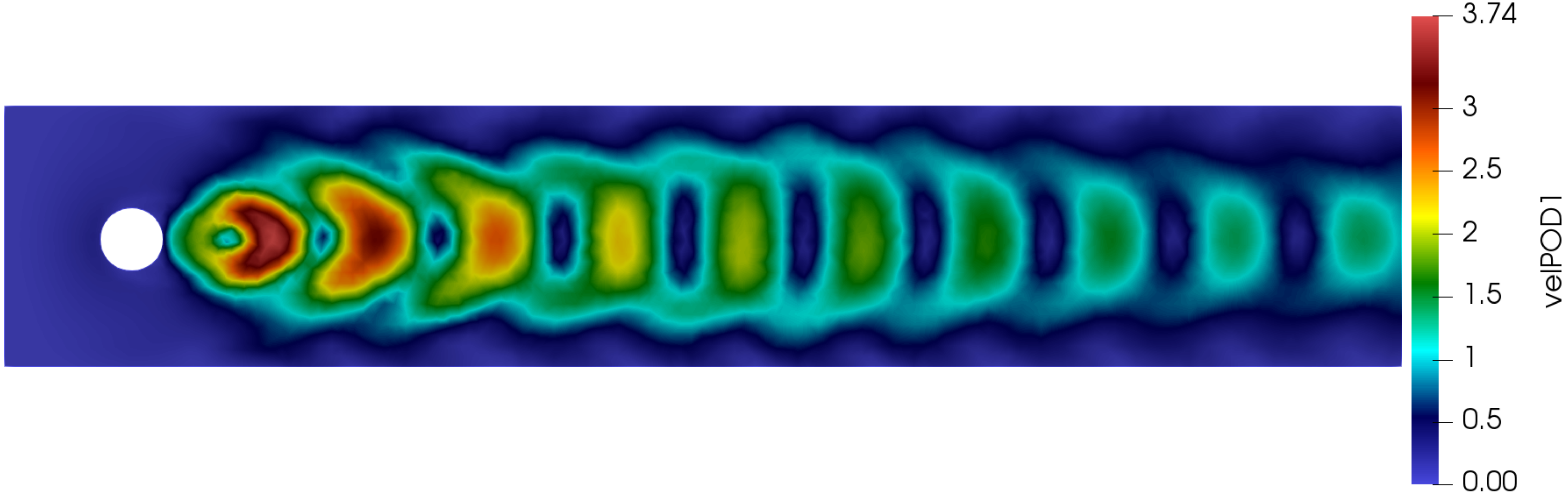}\hspace{-0.1cm}
\includegraphics[width=2.385in]{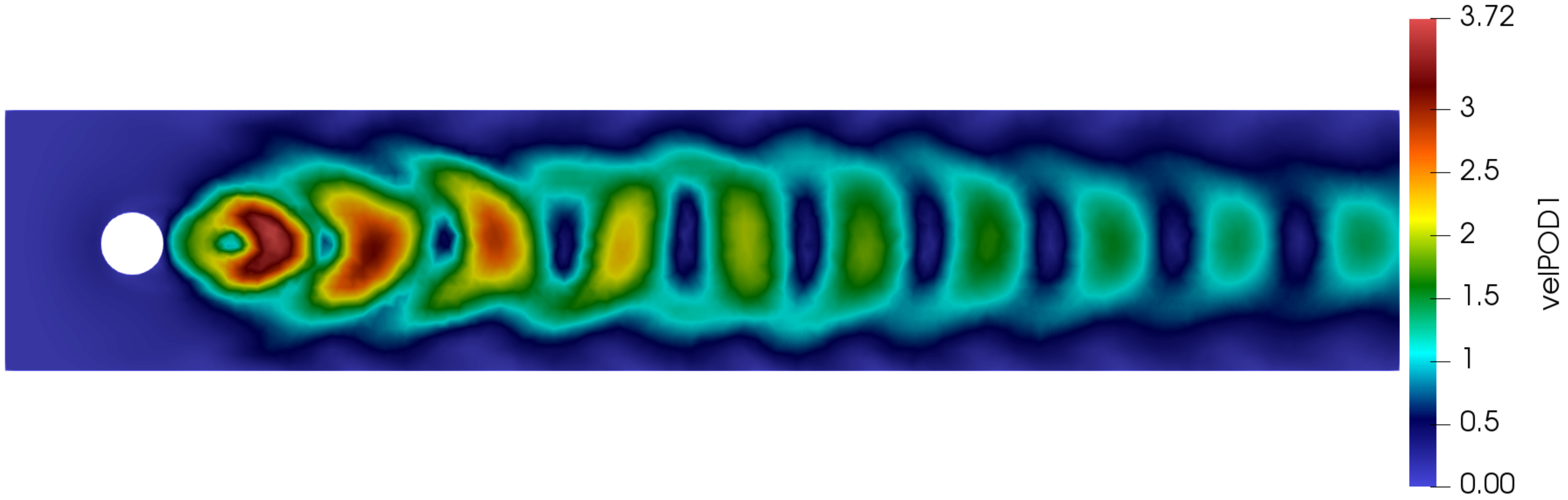}\\
\includegraphics[width=2.385in]{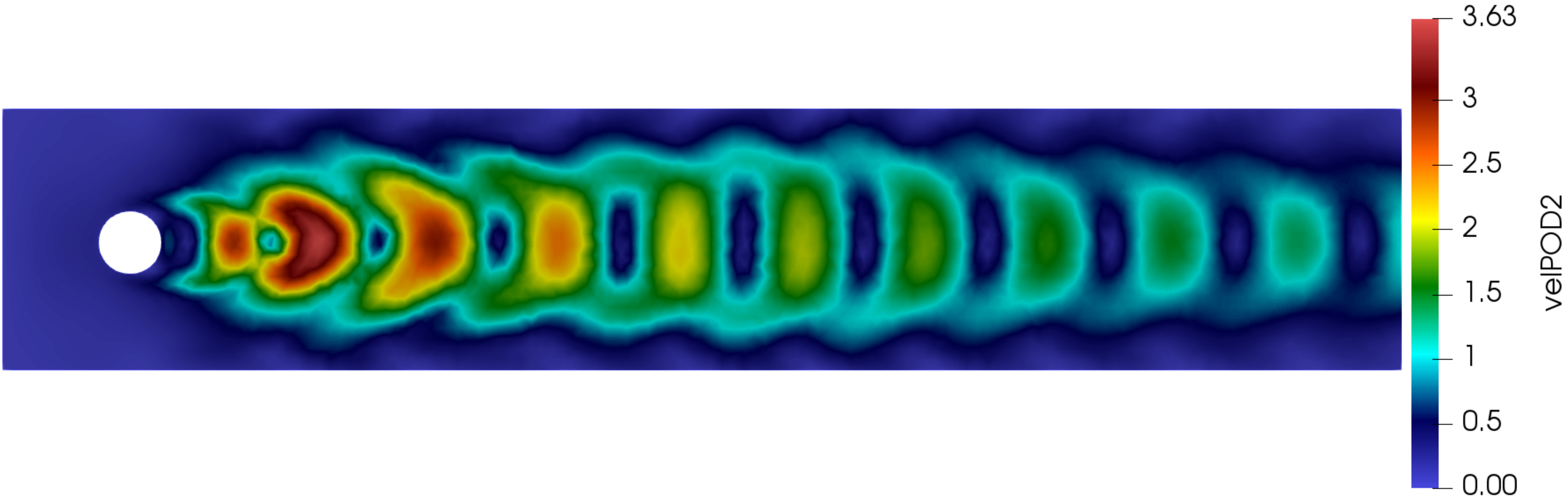}\hspace{-0.1cm}
\includegraphics[width=2.385in]{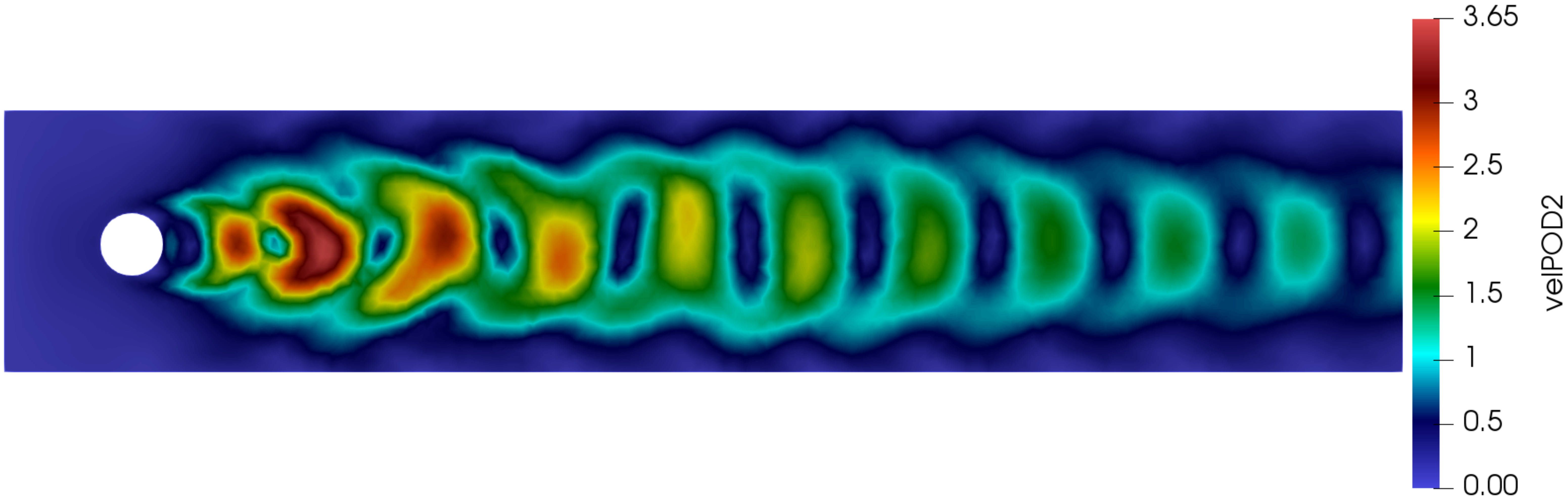}\\
\includegraphics[width=2.385in]{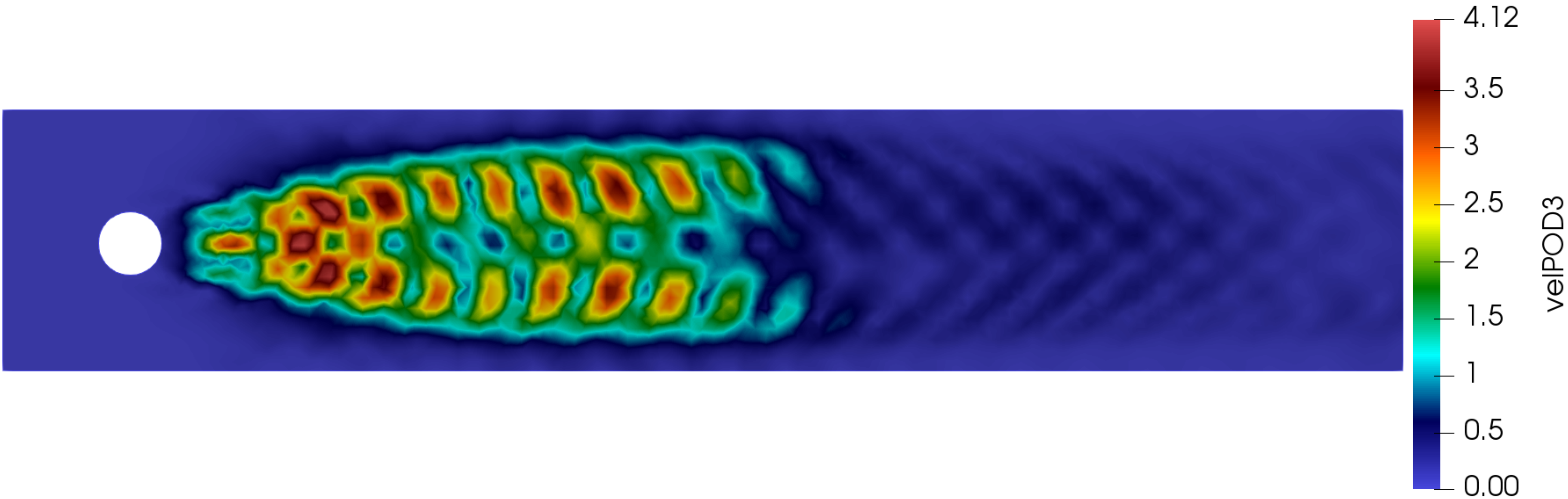}\hspace{-0.1cm}
\includegraphics[width=2.385in]{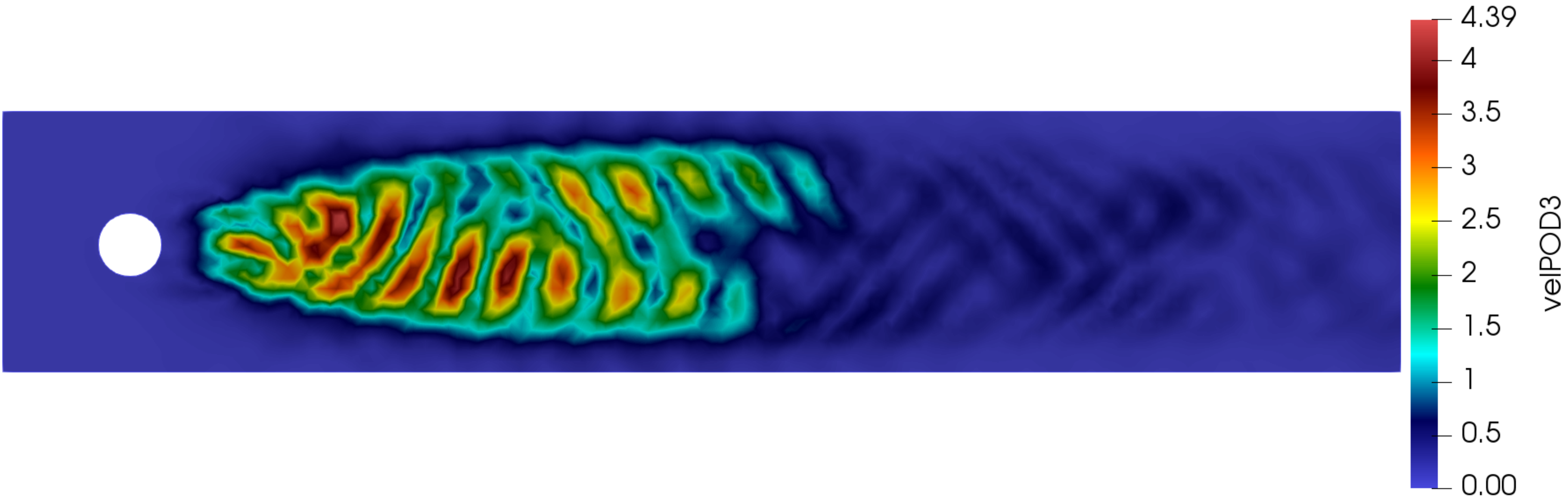}\\
\includegraphics[width=2.385in]{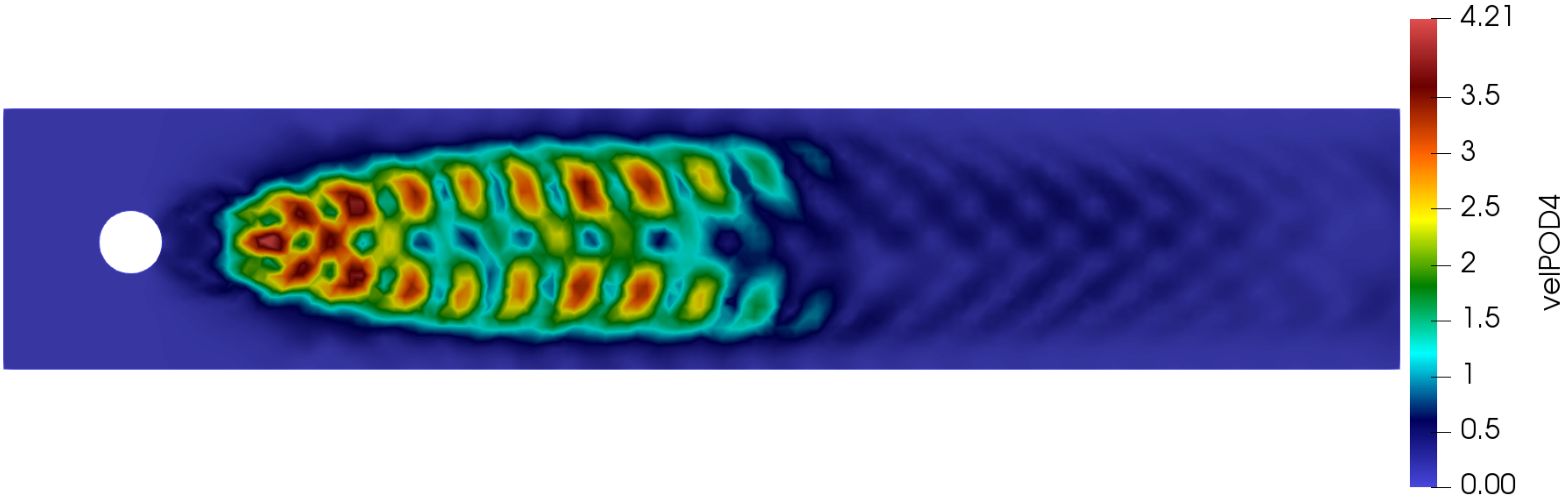}\hspace{-0.1cm}
\includegraphics[width=2.385in]{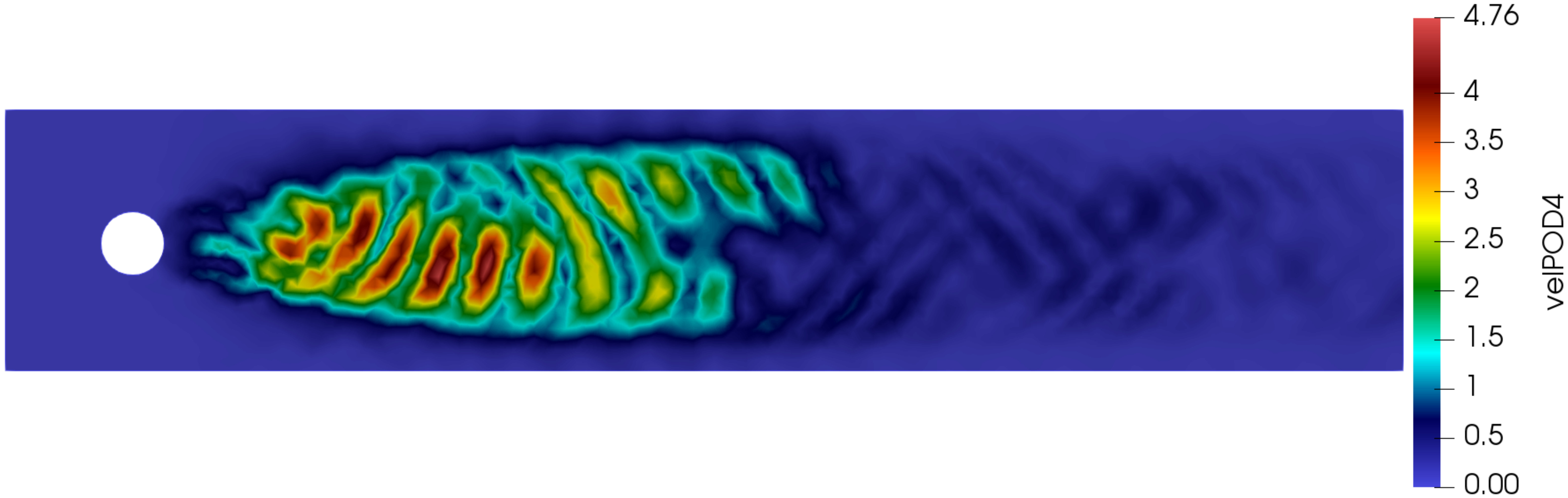}
\caption{Example \ref{sec:Re=100} (Case $Re=100$): First POD velocity modes (Euclidean norm) obtained with $166$ snapshots (full period basis, left) and $106$ snapshots (inaccurate basis corresponding to $64\%$ of one full period, right).}\label{fig:PODvelmodes}
\end{center}
\end{figure}

\begin{figure}[htb]
\begin{center}
\centerline{\includegraphics[width=5.in]{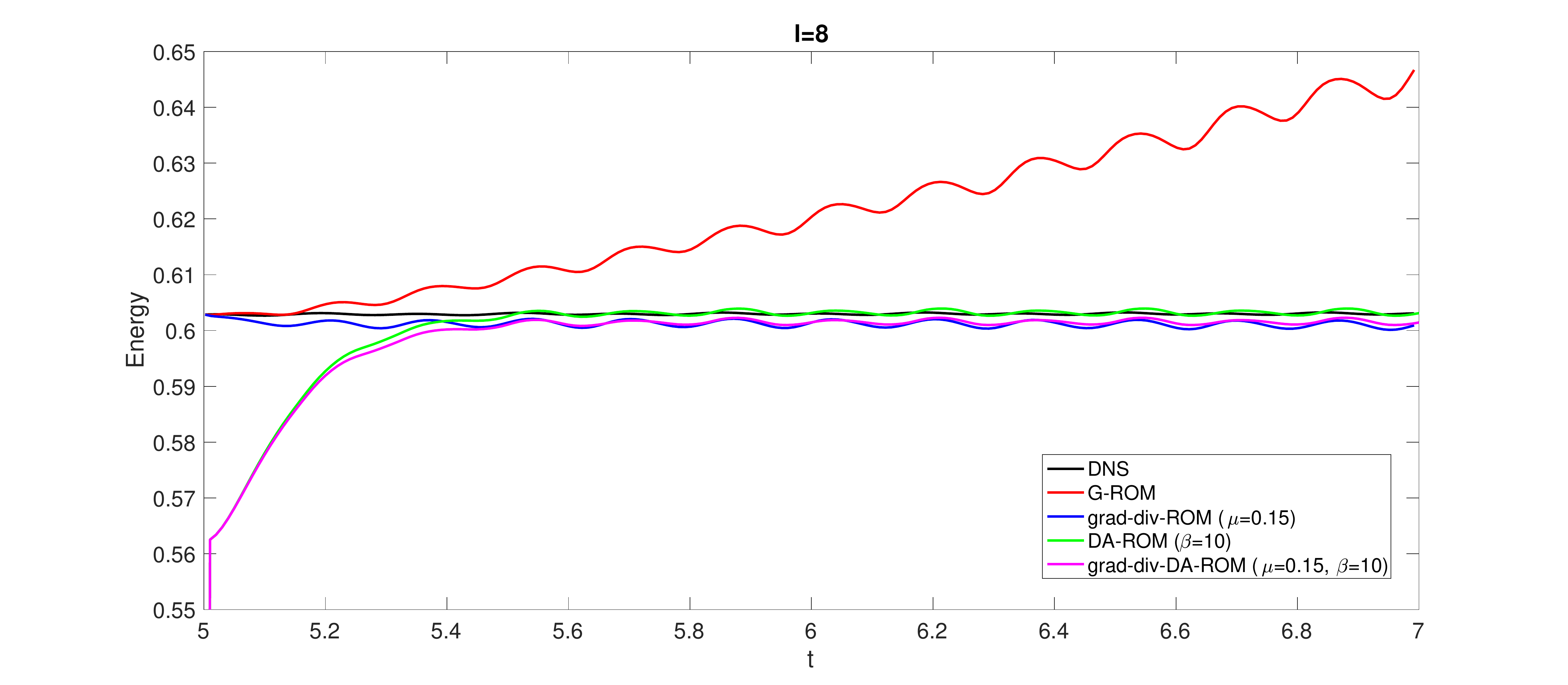}}
\centerline{\includegraphics[width=5.in]{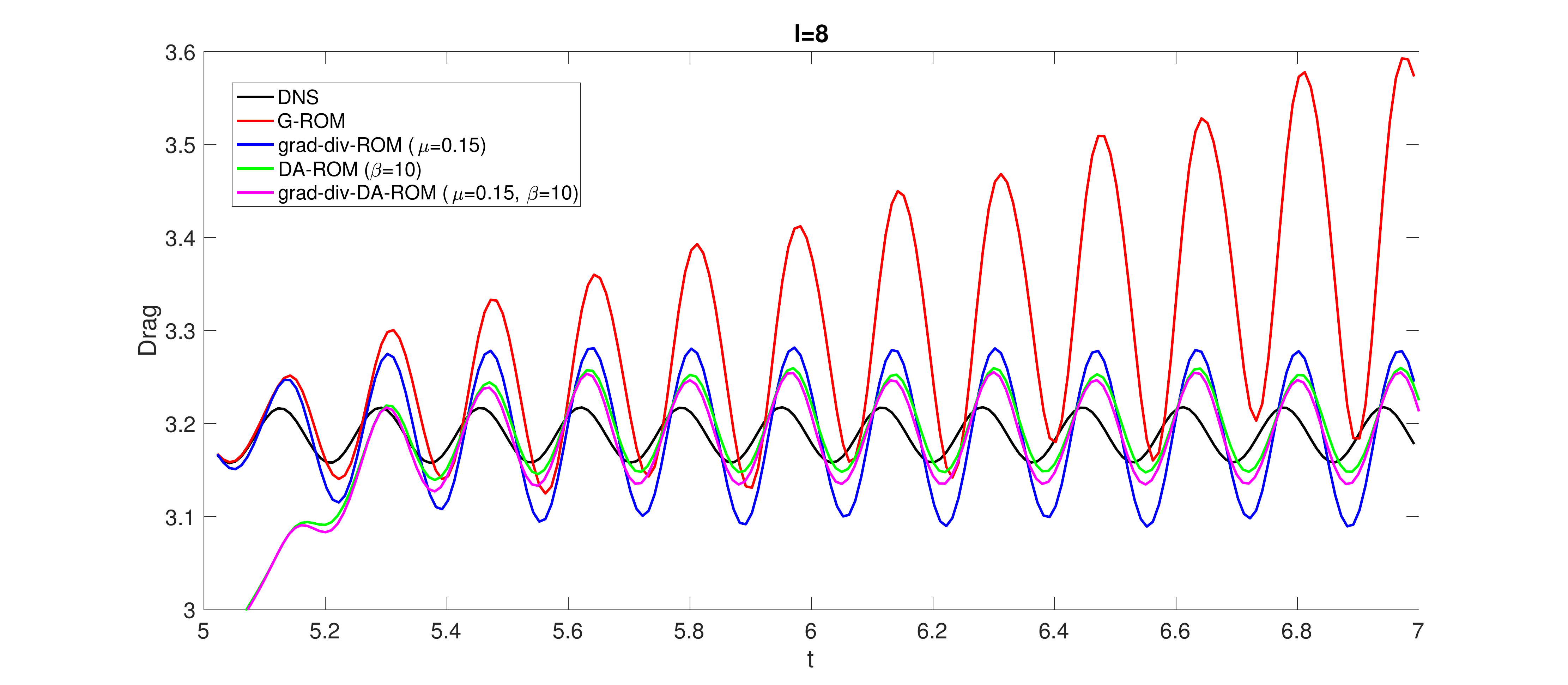}}
\centerline{\includegraphics[width=5.in]{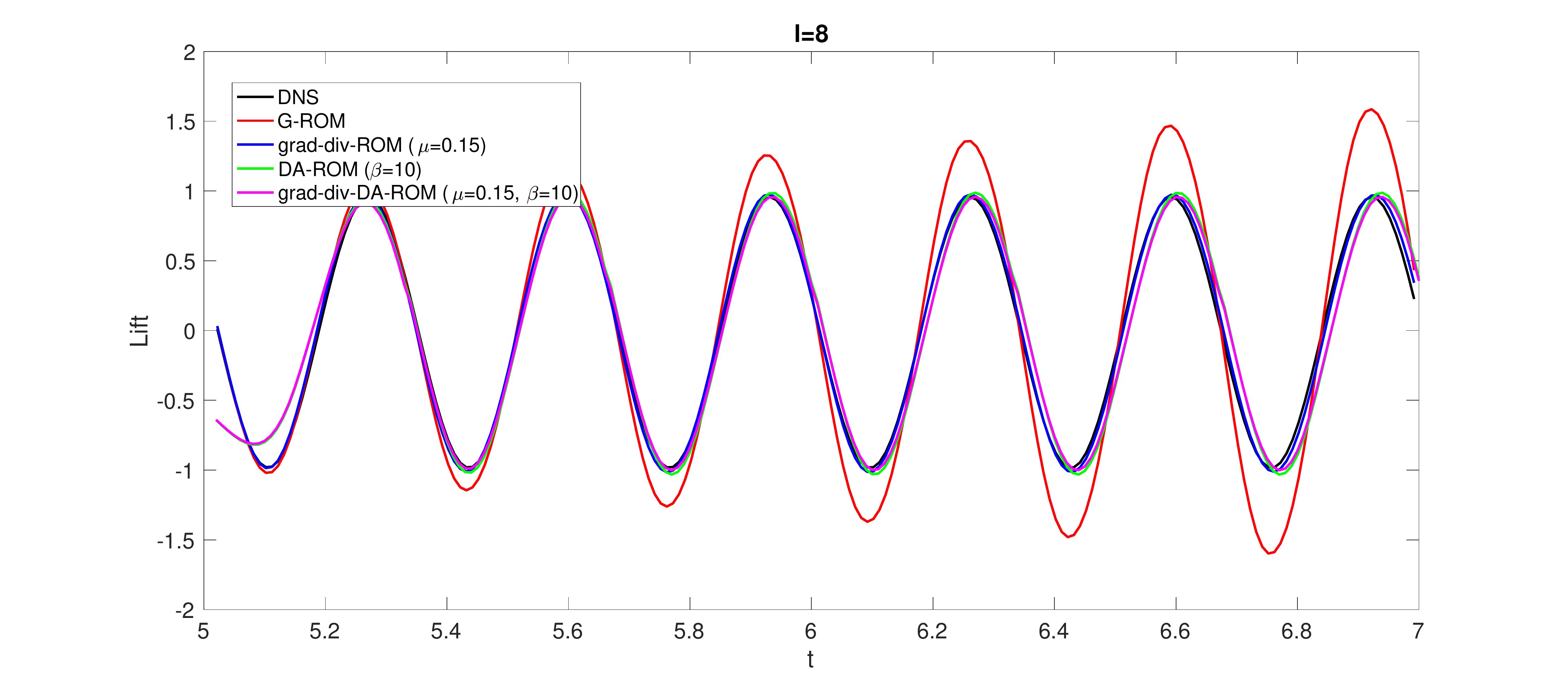}}
\caption{Example \ref{sec:Re=100} (Case $Re=100$): Temporal evolution of kinetic energy, drag coefficient and lift coefficient using $l=8$ modes ($106$ snapshots used, which comprise $64\%$ of one full period from $t=5\,\rm{s}$ to $t=5.212\,\rm{s}$).}\label{fig:QOIInac1}
\end{center}
\end{figure}

\begin{figure}[htb]
\begin{center}
\centerline{\includegraphics[width=5.in]{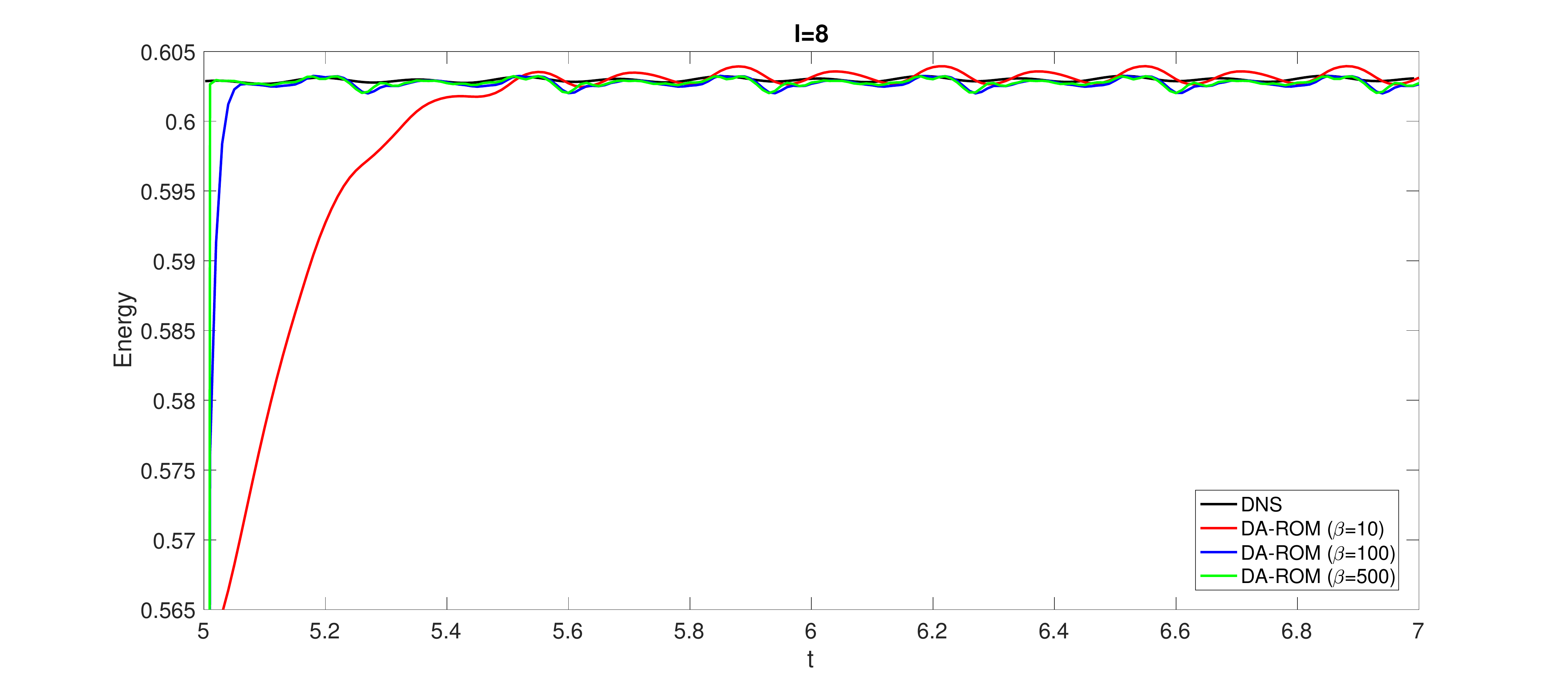}}
\centerline{\includegraphics[width=5.in]{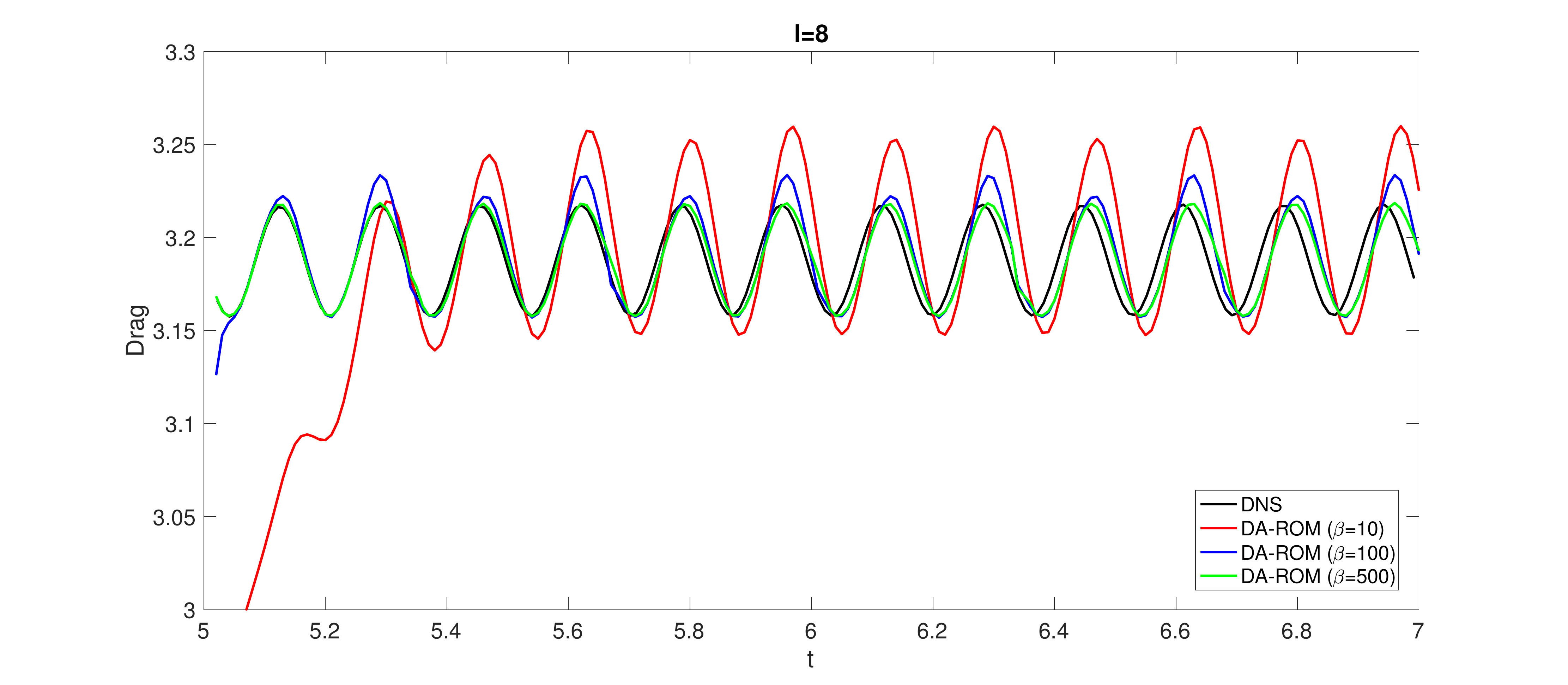}}
\centerline{\includegraphics[width=5.in]{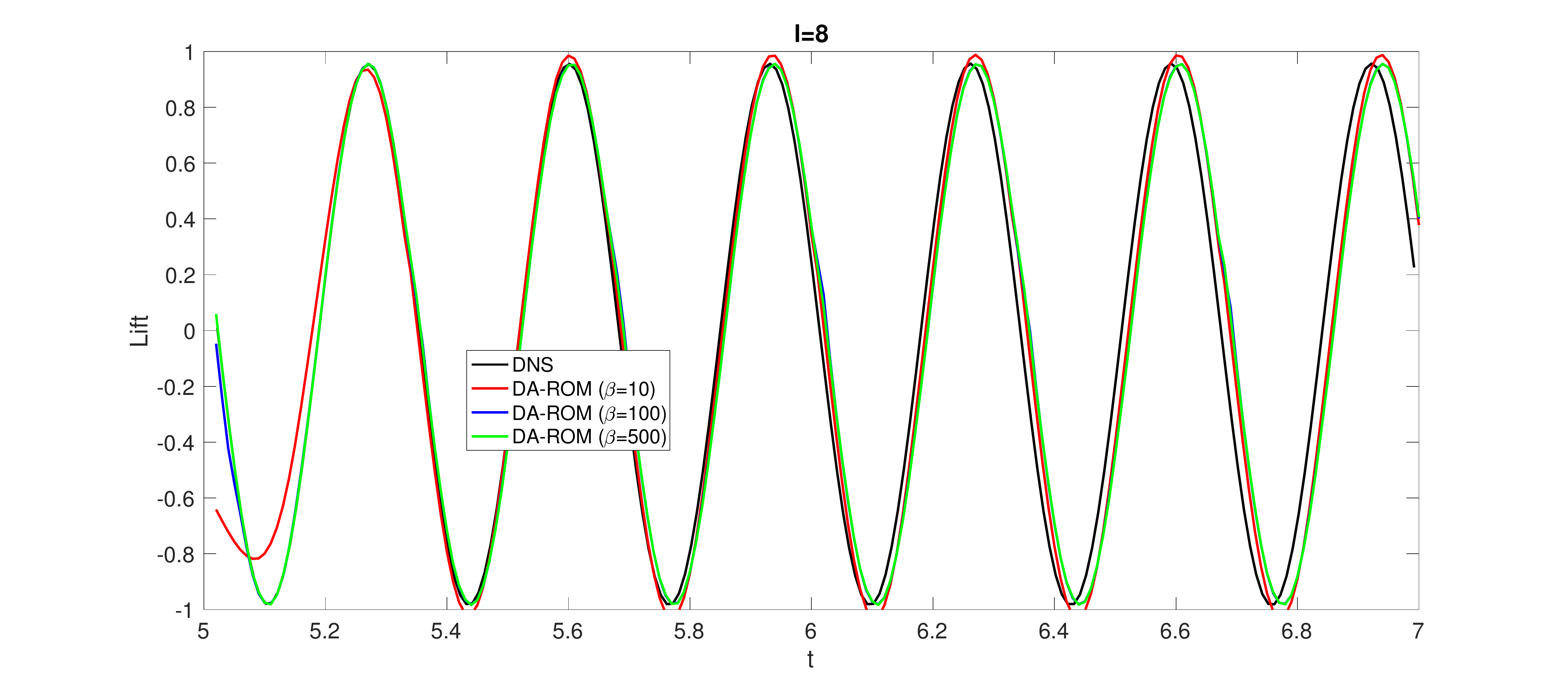}}
\caption{Example \ref{sec:Re=100} (Case $Re=100$): Temporal evolution of kinetic energy, drag coefficient and lift coefficient using $l=8$ modes for DA-ROM with $\beta=10,\,100,\,500$ ($106$ snapshots used, which comprise $64\%$ of one full period from $t=5\,\rm{s}$ to $t=5.212\,\rm{s}$).}\label{fig:QOIdaInac1}
\end{center}
\end{figure}

\begin{figure}[htb]
\begin{center}
\centerline{\includegraphics[width=5.in]{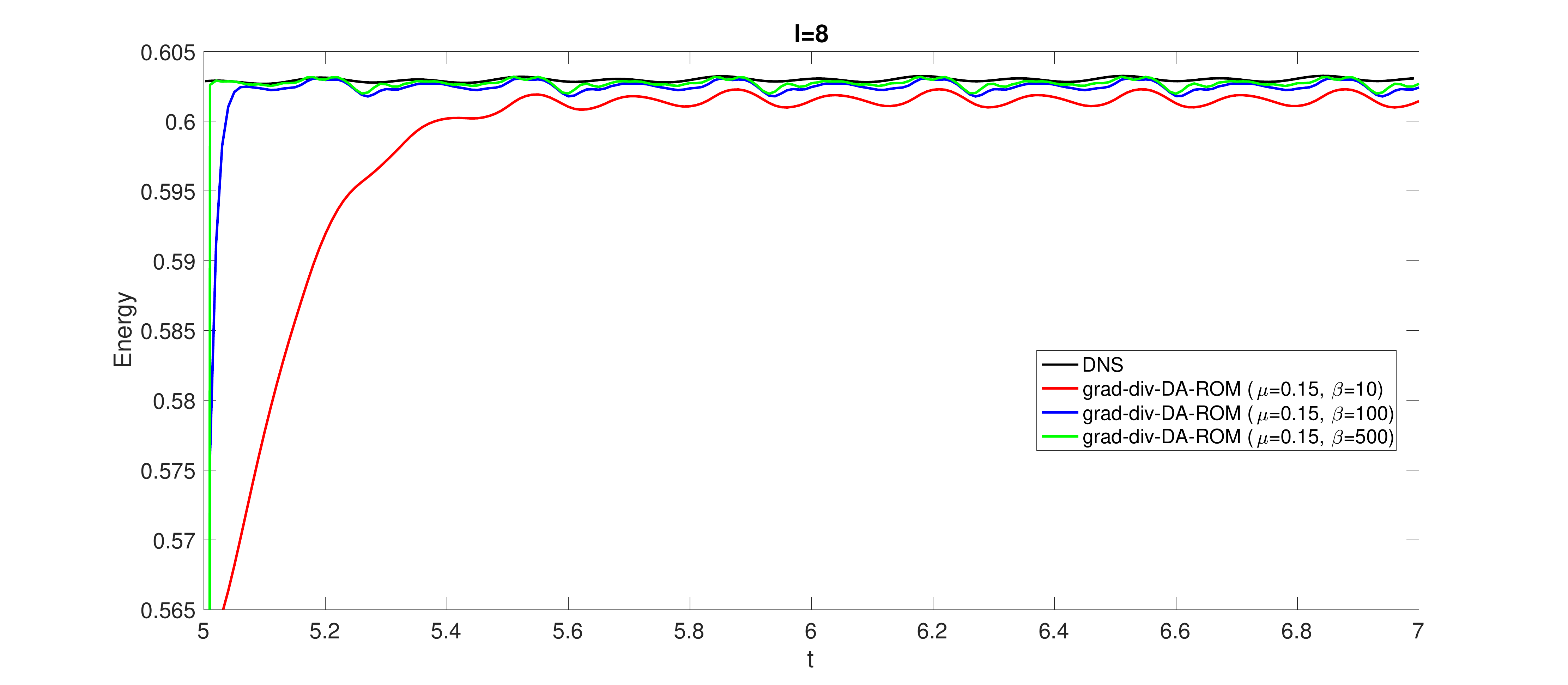}}
\centerline{\includegraphics[width=5.in]{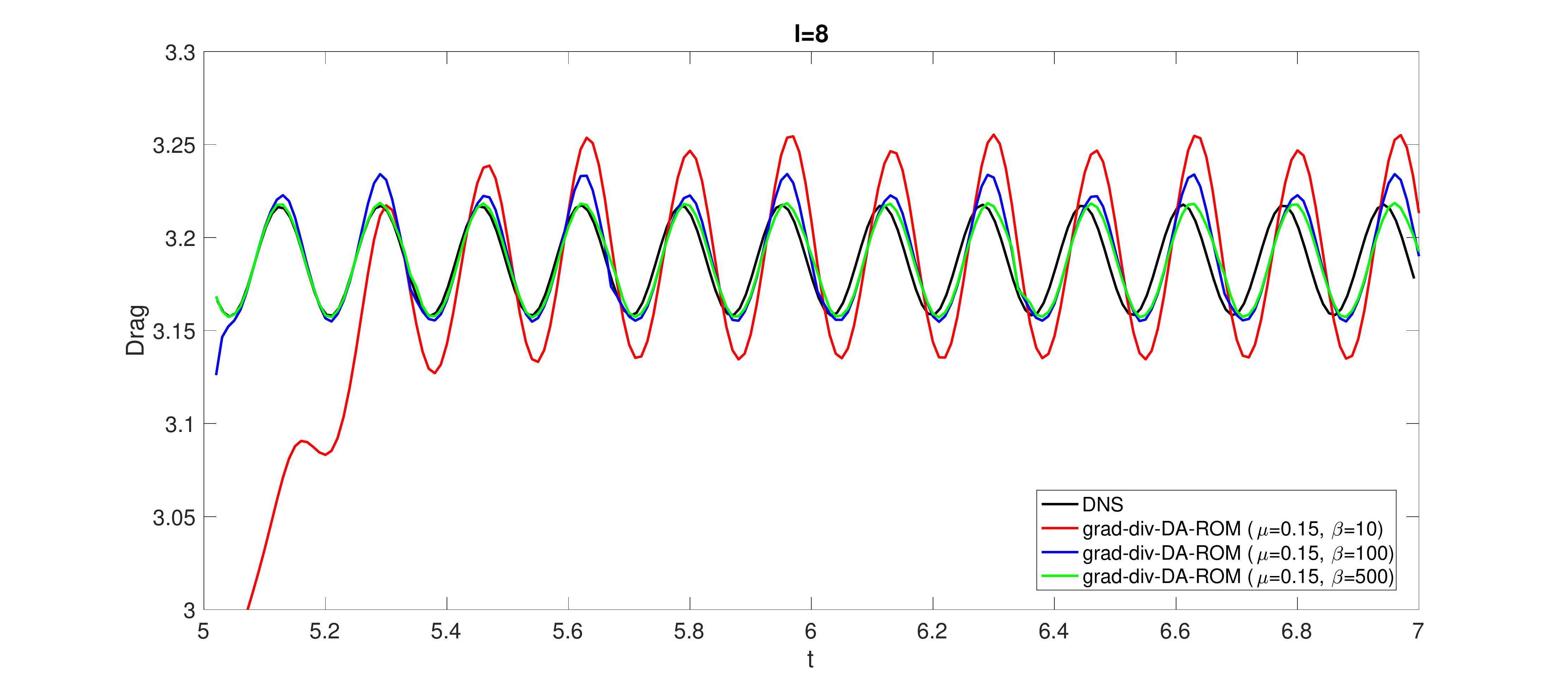}}
\centerline{\includegraphics[width=5.in]{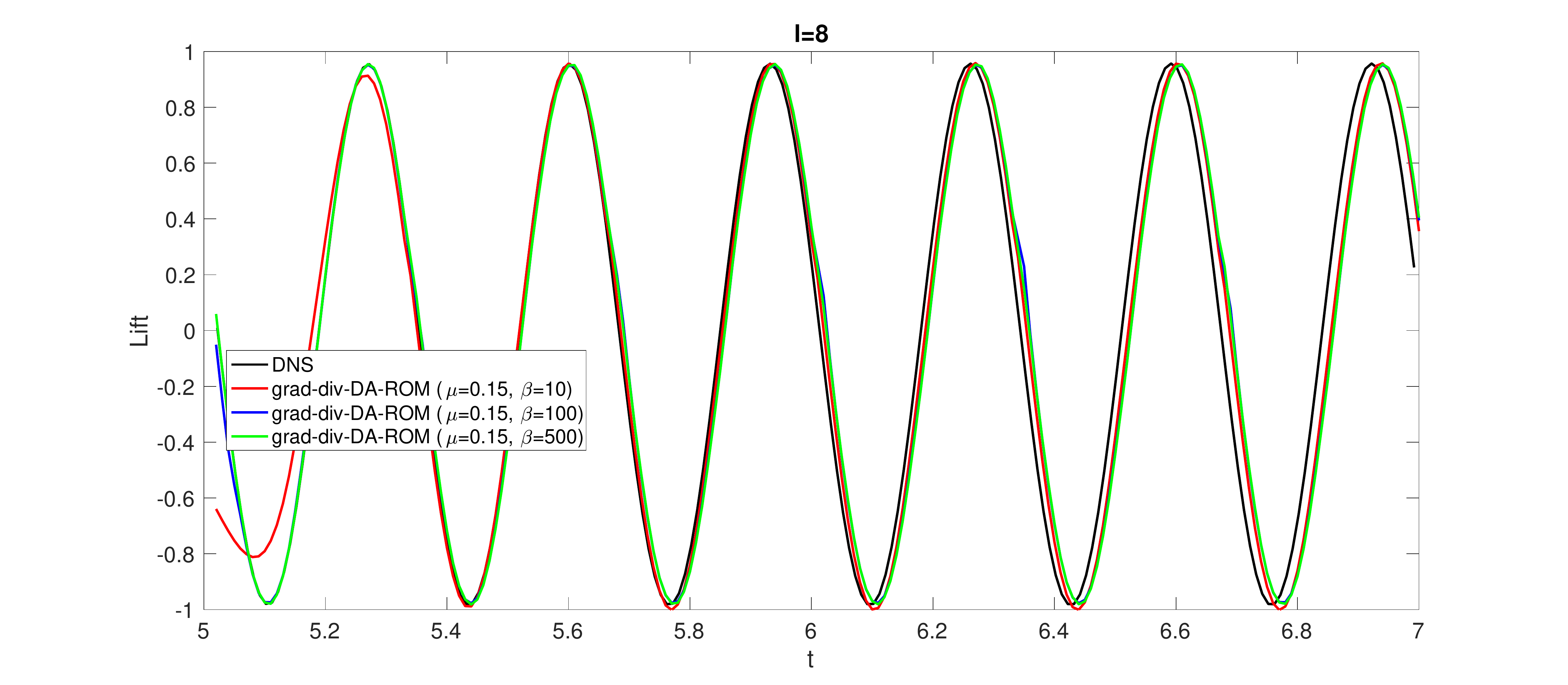}}
\caption{Example \ref{sec:Re=100} (Case $Re=100$): Temporal evolution of kinetic energy, drag coefficient and lift coefficient using $l=8$ modes for grad-div-DA-ROM with $\mu=0.15$ and $\beta=10,\,100,\,500$ ($106$ snapshots used, which comprise $64\%$ of one full period from $t=5\,\rm{s}$ to $t=5.212\,\rm{s}$).}\label{fig:QOIgdDAInac1}
\end{center}
\end{figure}

\begin{table}[htb]
$$\hspace{-0.1cm}
\begin{tabular}{|c|c|c|c|c|}
\hline
 & \multicolumn{4}{|c|}{$Re=100$ (Inaccurate snapshots)}\\
\hline
Errors & G-ROM & grad-div-ROM & DA-ROM & grad-div-DA-ROM\\
\hline
$E_{kin}^{max}$ & 4.34e-02 & 4.13e-04 & 2.30e-05 & 6.10e-05\\
\hline
$c_{D}^{max}$ & 3.75e-01 & 6.40e-02 & 1.14e-02 & 1.16e-02\\
\hline
$c_{L}^{max}$ & 6.29e-01 & 1.89e-02 & 1.16e-02 & 1.11e-02\\
\hline
$\ell^2({\bf L}^2)\, \uv$ norm & 1.76e-01 & 1.01e-01 & 2.99e-02 & 2.99e-02\\
\hline
\end{tabular}$$\caption{Example \ref{sec:Re=100} (Case $Re=100$): Errors levels with respect to DNS for G-ROM, grad-div-ROM ($\mu=0.15$), DA-ROM ($\beta=500$), and grad-div-DA-ROM ($\mu=0.15, \beta=500$) ($106$ snapshots used, which comprise $64\%$ of one full period from $t=5\,\rm{s}$ to $t=5.212\,\rm{s}$).}\label{tab:ErrLevCompInac1}
\end{table}

%%%=====================================================
%%%=====================================================

\subsection{Case $Re=1000$}\label{sec:Re=1000}

In this section, we discuss results for $Re=1000$. In this case, we have used a finer computational grid with respect to $Re=100$ to compute the snapshots (see Figure \ref{fig:MeshRef} on top, for which $h=1.46\times 10^{-2}\,\rm{m}$, resulting in $101\,820$ d.o.f. for velocities and $12\,885$ d.o.f. for pressure). This has been necessary to obtain stable DNS results. However, the coarse mesh for DA in ROM is the same as for the previous case (see Figure \ref{fig:MeshRef} on bottom). The full period length of the statistically steady state is now $0.22\,\rm{s}$, so that $110$ snapshots were collected, starting from $t=5\,\rm{s}$. Again, all tested ROM have been run in the stable response time interval $[5,7]\,\rm{s}$, corresponding now to nine periods for the lift coefficient. This time range is thus nine times wider with respect to the time window used for the generation of the POD modes, so that at the higher Reynolds number we are performing the longer time integration with respect to the time interval used to compute the snapshots.

Numerical results for energy, drag and lift predictions using $l=8$ modes are shown in Figures \ref{fig:QOI1000}, \ref{fig:QOIda1000}, \ref{fig:QOIgdDA1000}. In particular, Figure \ref{fig:QOI1000} shows a comparison within DNS, G-ROM, grad-div-ROM with $\mu=0.001$, DA-ROM with $\beta=10$, and grad-div-DA-ROM with $\mu=0.001$ and $\beta=10$. The value $\mu=0.001$ for the grad-div stabilization term has been fixed again minimizing the error with respect to the DNS energy. As already noticed in the previous case, from this figure we observe that, whereas the G-ROM solution is totally inaccurate, the application of the grad-div stabilization term helps to improve the G-ROM solution, although it shows larger error levels than the lower Reynolds number case $Re=100$ when compared to DNS results. A slight improvement is observed again for using DA with $\beta=10$, being results for DA-ROM and grad-div-DA-ROM almost identical. Looking at the temporal evolution of the kinetic energy (on top), we observe that also in this case the DA results almost stabilize around $t=5.4\,\rm{s}$ with $\beta=10$, even if the reached values under-estimate the DNS results.

Increasing the nudging parameter $\beta$ from $10$ to $500$ for DA reduced order methods (see Figures \ref{fig:QOIda1000}, \ref{fig:QOIgdDA1000}) already allows to almost approach DNS results, although we note a detachment in predicting $c_D, c_L$ as time increases. Almost identical results are obtained with both DA reduced order methods, for which the best predictions are given by the largest values $\beta=500$ of the nudging parameter, although we observe a similar accuracy already for $\beta=100$. Note again that for large values of the nudging parameter ($\beta=100, 500$), the DNS results are almost approached just after very few iterations (around $20$, i.e. $0.04\,\rm{s}$, for $\beta=100$ and $5$, i.e. $0.01\,\rm{s}$, for $\beta=500$). All these results are confirmed by Table \ref{tab:ErrLevComp1000}. Note that grad-div-ROM now just slightly reduces the error levels with respect to G-ROM for all quantities, while both DA reduced order methods still guarantee a reduction of two orders of magnitude for $E_{kin}^{max}, c_{D}^{max}$, and five times for $c_{L}^{max}$. In terms of $\ell^2({\bf L}^2)$ velocity norm, both DA reduced order methods reduces the G-ROM error level by a factor of $6.5$, while the grad-div-ROM is just slightly better accurate than G-ROM.

\begin{figure}[htb]
\begin{center}
\centerline{\includegraphics[width=5in]{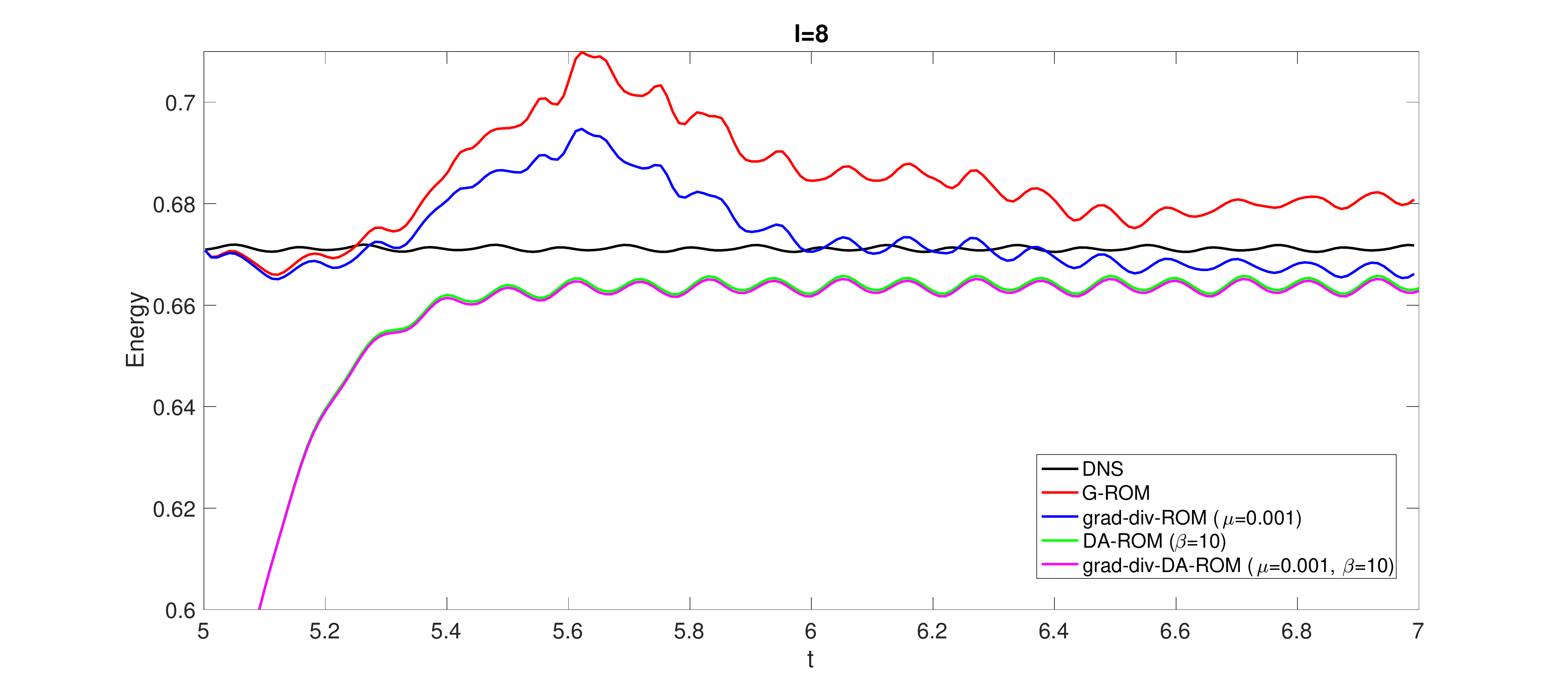}}
\centerline{\includegraphics[width=5in]{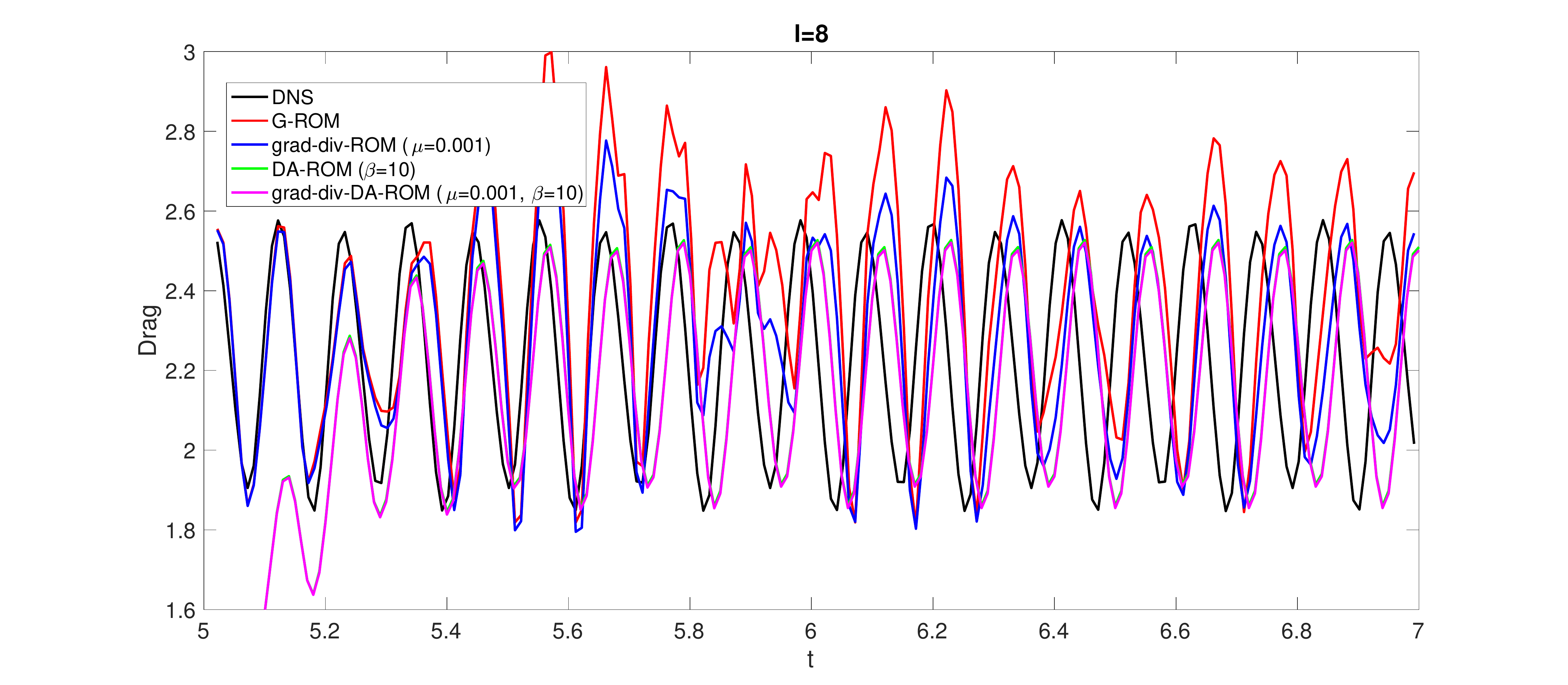}}
\centerline{\includegraphics[width=5in]{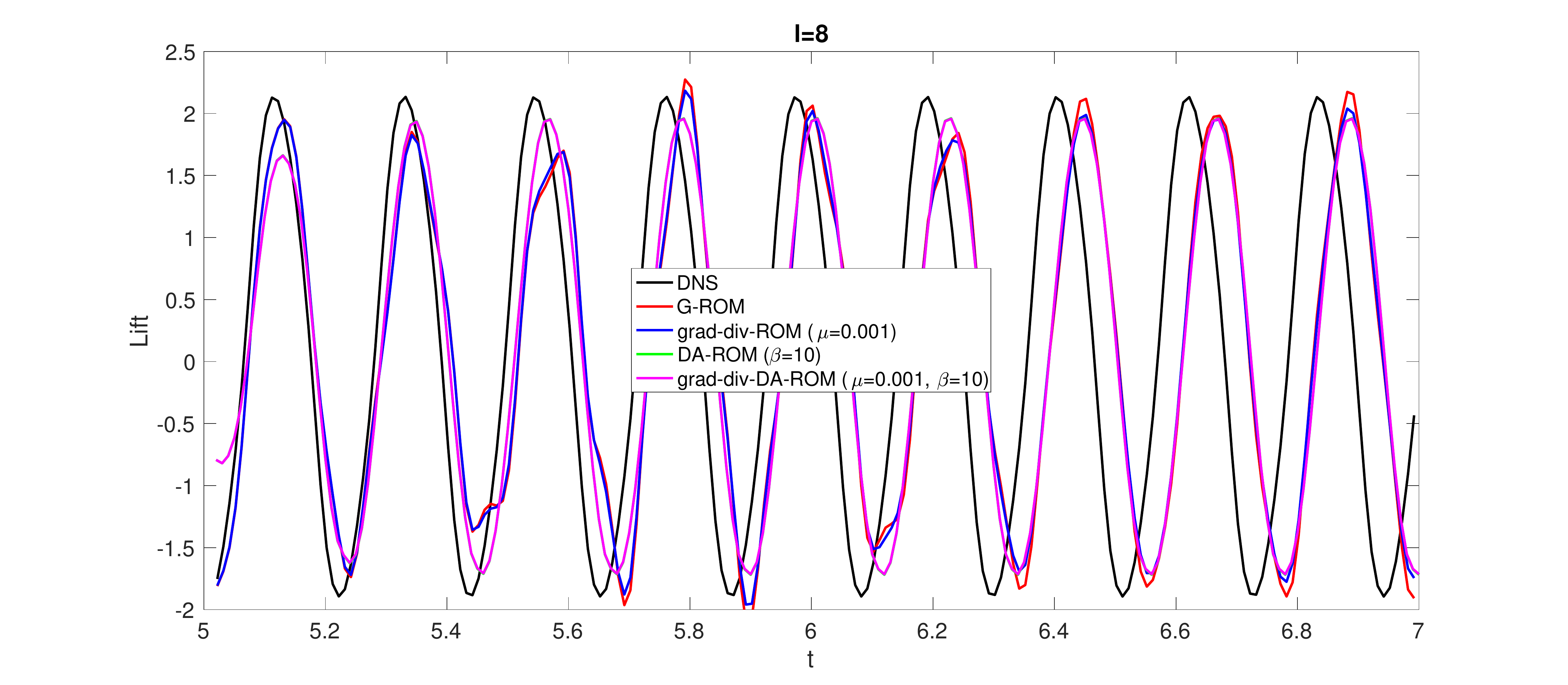}}
\caption{Example \ref{sec:Re=1000} (Case $Re=1000$): Temporal evolution of kinetic energy, drag coefficient and lift coefficient using $l=8$ modes ($110$ snapshots used, which comprise one full period from $t=5\,\rm{s}$ to $t=5.22\,\rm{s}$).}\label{fig:QOI1000}
\end{center}
\end{figure}

\begin{figure}[htb]
\begin{center}
\centerline{\includegraphics[width=5in]{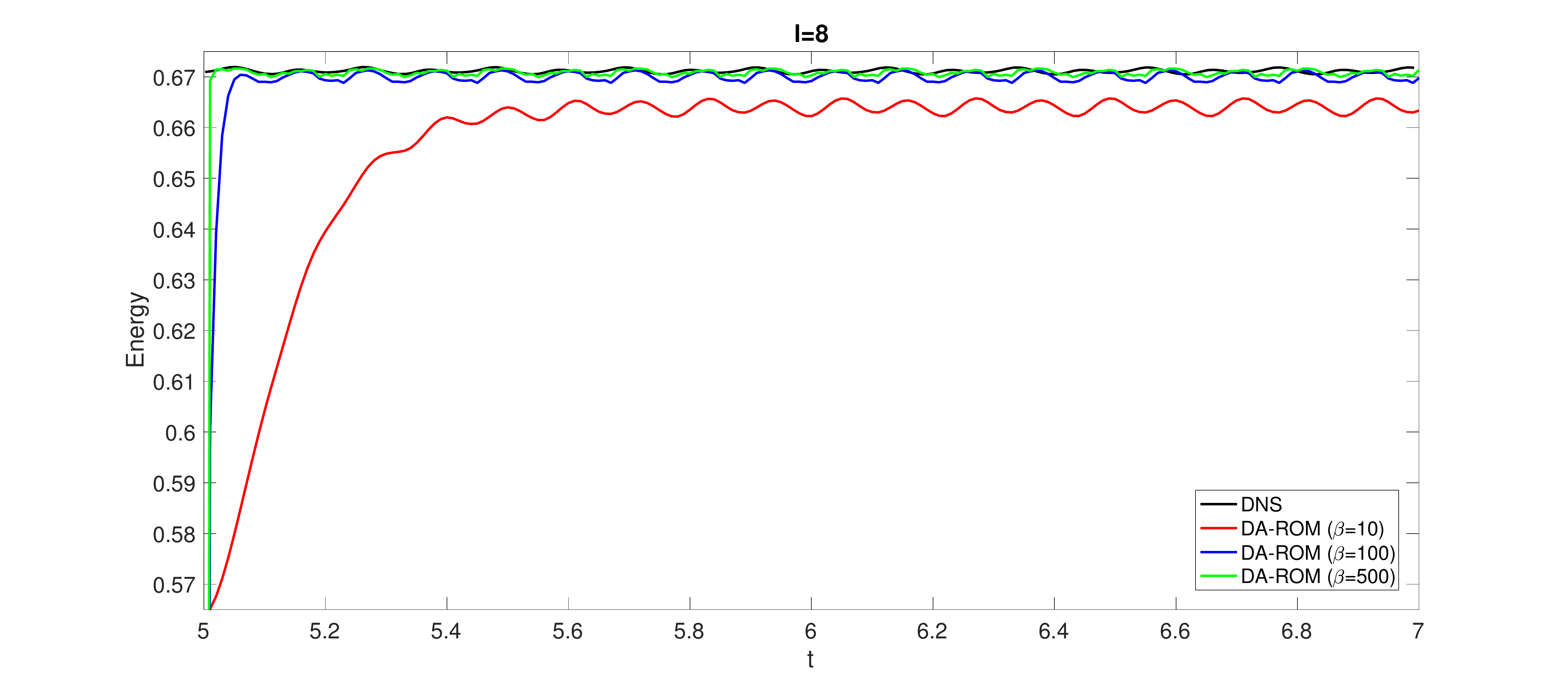}}
\centerline{\includegraphics[width=5in]{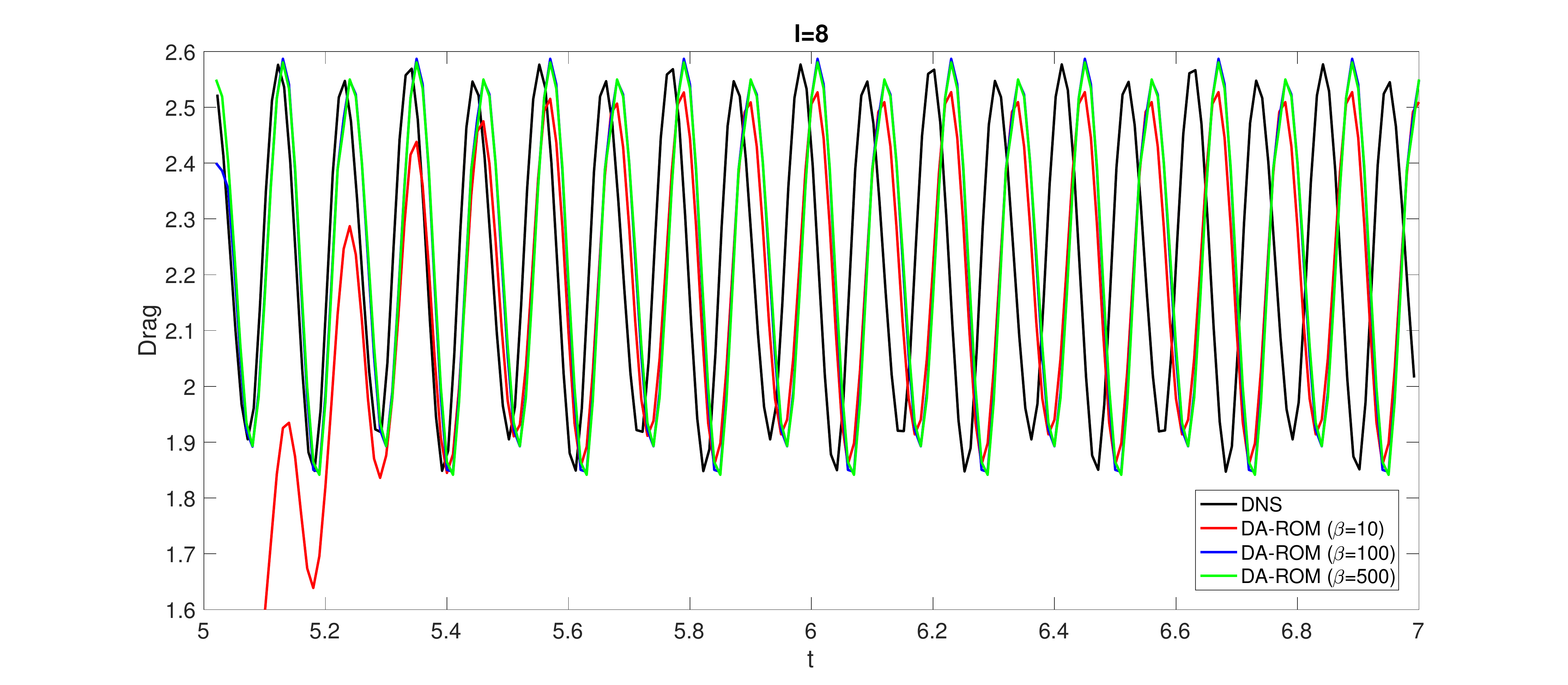}}
\centerline{\includegraphics[width=5in]{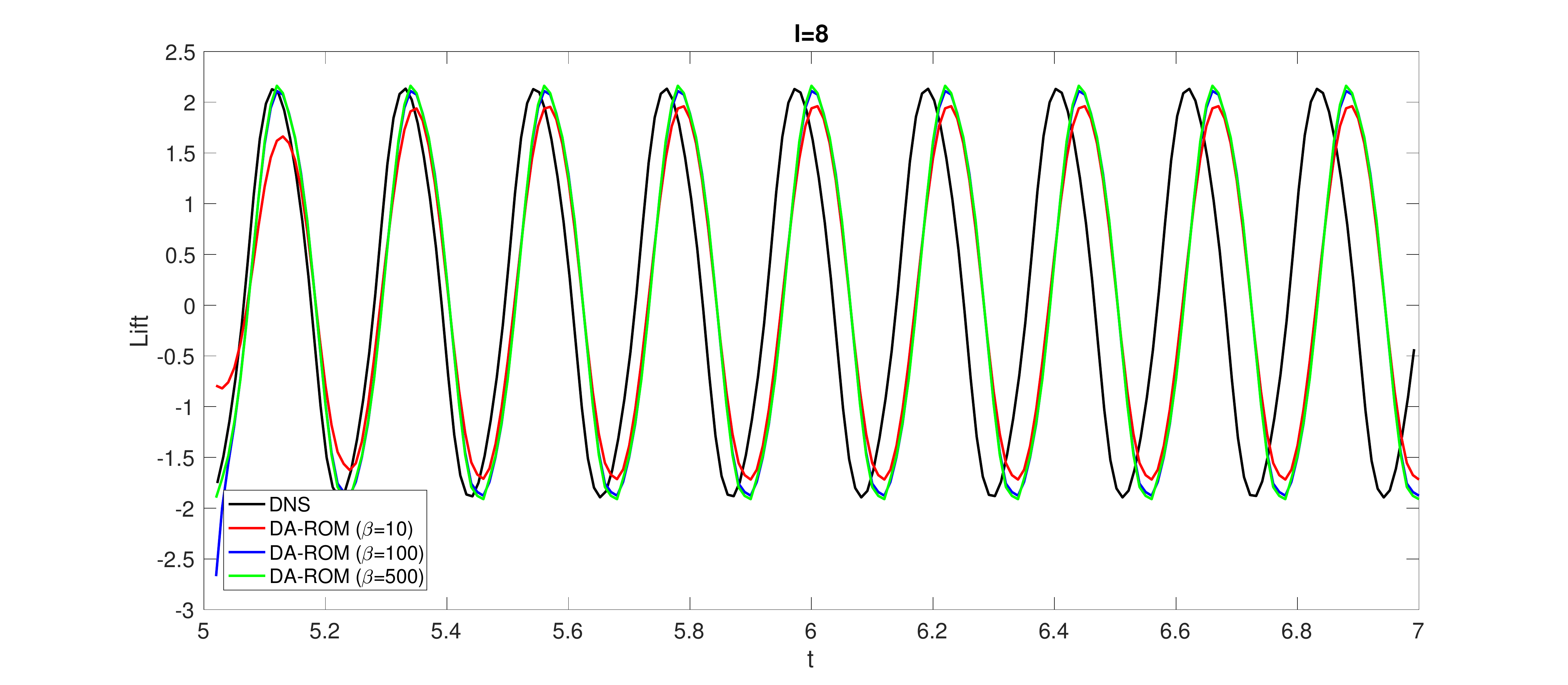}}
\caption{Example \ref{sec:Re=1000} (Case $Re=1000$): Temporal evolution of kinetic energy, drag coefficient and lift coefficient using $l=8$ modes for DA-ROM with $\beta=10,\,100,\,500$ ($110$ snapshots used, which comprise one full period from $t=5\,\rm{s}$ to $t=5.22\,\rm{s}$).}\label{fig:QOIda1000}
\end{center}
\end{figure}

\begin{figure}[htb]
\begin{center}
\centerline{\includegraphics[width=5in]{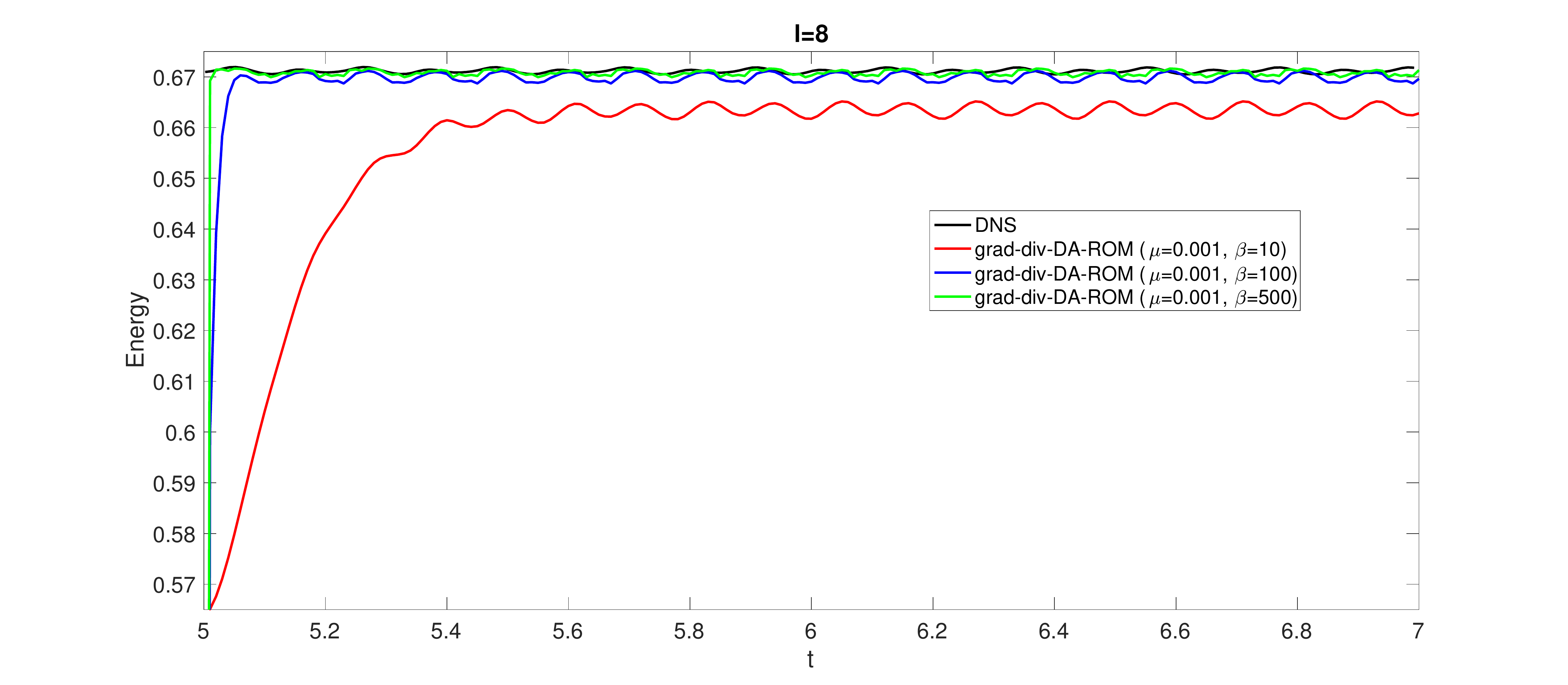}}
\centerline{\includegraphics[width=5in]{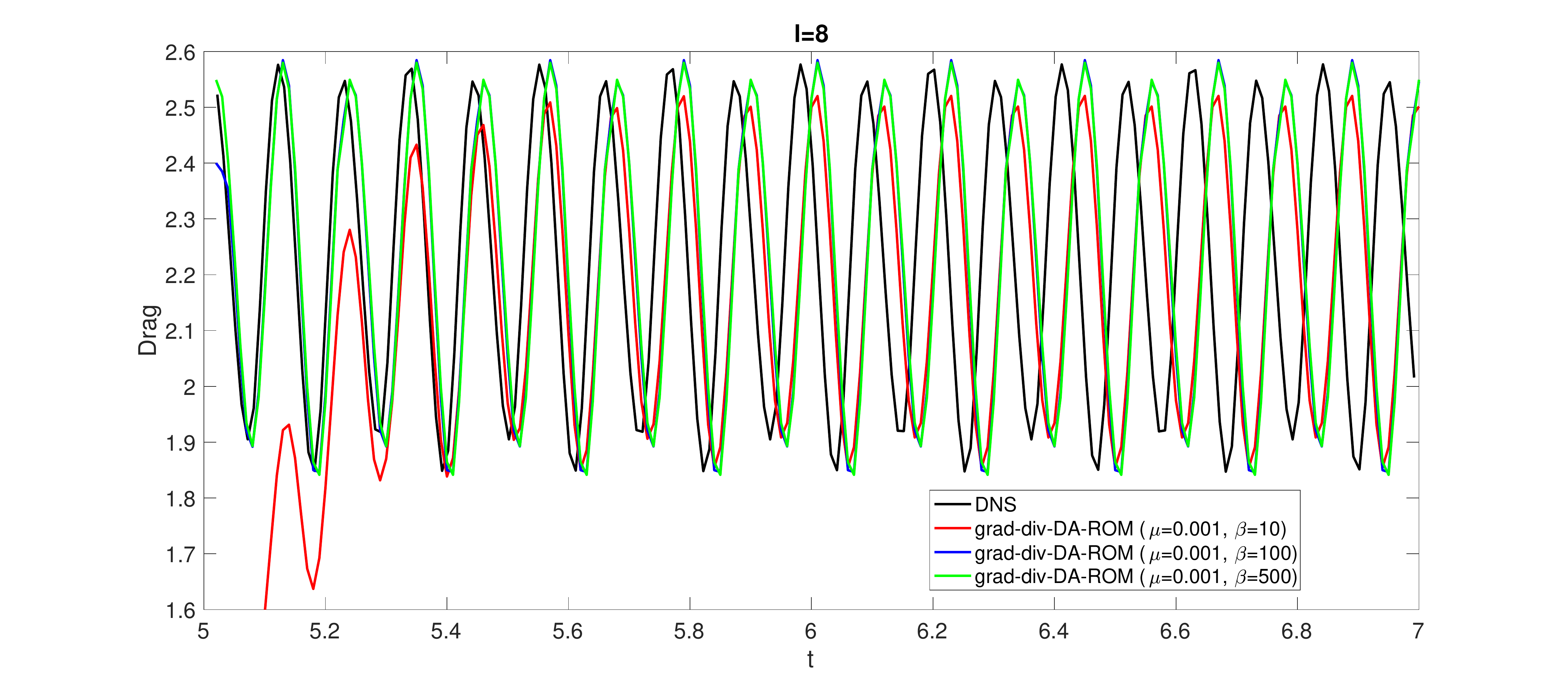}}
\centerline{\includegraphics[width=5in]{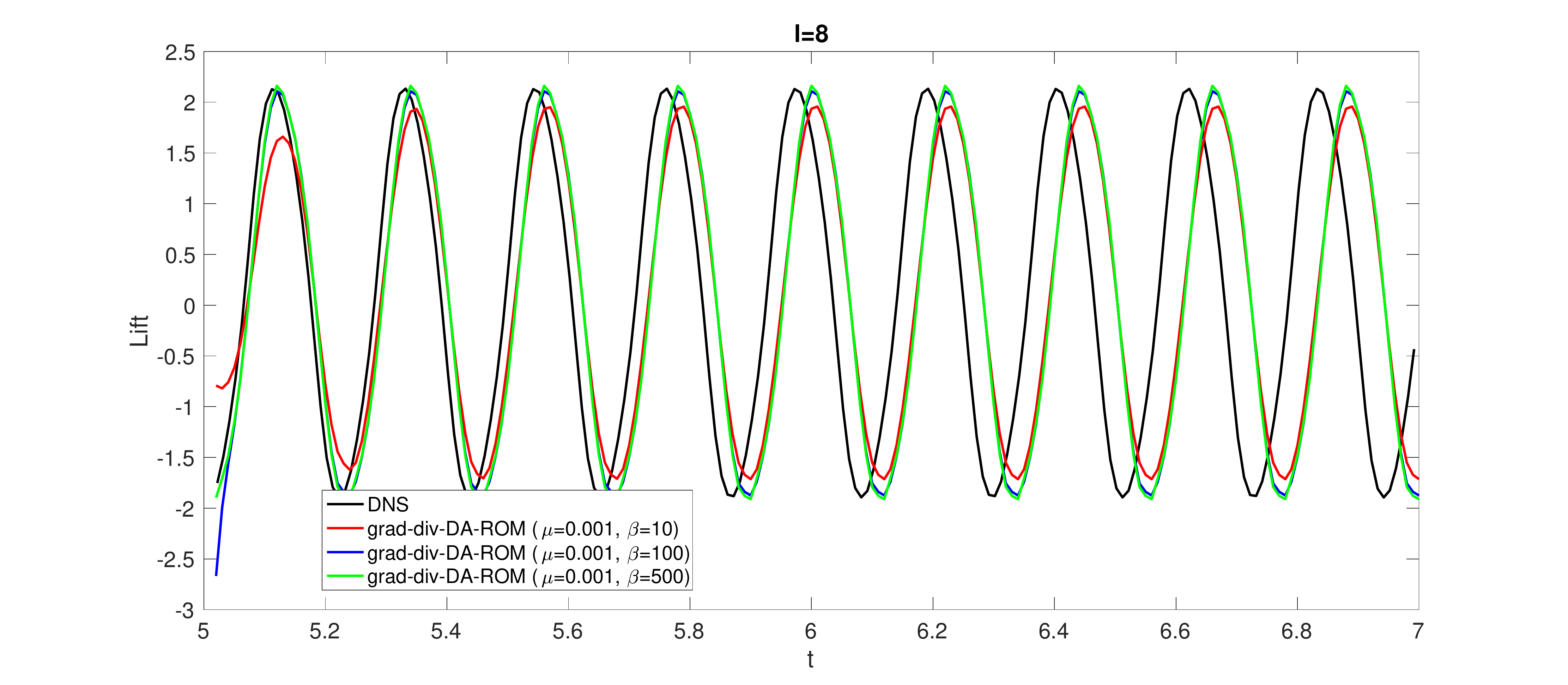}}
\caption{Example \ref{sec:Re=1000} (Case $Re=1000$): Temporal evolution of kinetic energy, drag coefficient and lift coefficient using $l=8$ modes for grad-div-DA-ROM with $\mu=0.001$ and $\beta=10,\,100,\,500$ ($110$ snapshots used, which comprise one full period from $t=5\,\rm{s}$ to $t=5.22\,\rm{s}$).}\label{fig:QOIgdDA1000}
\end{center}
\end{figure}

\begin{table}[htb]
$$\hspace{-0.1cm}
\begin{tabular}{|c|c|c|c|c|}
\hline
 & \multicolumn{4}{|c|}{$Re=1000$}\\
\hline
Errors & G-ROM & grad-div-ROM & DA-ROM & grad-div-DA-ROM\\
\hline
$E_{kin}^{max}$ & 3.79e-02 & 2.28e-02 & 2.85e-04 & 3.11e-04\\
\hline
$c_{D}^{max}$ & 4.21e-01 & 2.82e-01 & 3.46e-03 & 2.85e-03\\
\hline
$c_{L}^{max}$ & 1.40e-01 & 4.95e-02 & 2.83e-02 & 2.77e-02\\
\hline
$\ell^2({\bf L}^2)\, \uv$ norm & 3.38e-01 & 3.02e-01 & 5.19e-02 & 5.19e-02\\
\hline
\end{tabular}$$\caption{Example \ref{sec:Re=1000} (Case $Re=1000$): Errors levels with respect to DNS for G-ROM, grad-div-ROM ($\mu=0.001$), DA-ROM ($\beta=500$), and grad-div-DA-ROM ($\mu=0.001, \beta=500$) ($110$ snapshots used, which comprise one full period from $t=5\,\rm{s}$ to $t=5.22\,\rm{s}$).}\label{tab:ErrLevComp1000}
\end{table}

Also in this case we finally investigate the considered ROM performances in predicting quantities of interest when inaccurate snapshots ($64\%$ of one full period) are used in their construction. Thus, we generate inaccurate snapshots using $64\%$ of one full period of DNS data, which corresponds in this case to the first $70$ DNS time step solutions from $t=5\,\rm{s}$ to $t=5.14\,\rm{s}$. Figure \ref{fig:PODvelmodes1000} displays the Euclidean norm of the first POD velocity modes obtained with the full set of snapshots (left) and the inaccurate set of snapshots (right). Results for the considered ROM using $l=8$ modes in this case are shown in Figures \ref{fig:QOIInac11000}, \ref{fig:QOIdaInac11000}, \ref{fig:QOIgdDAInac11000}. Here, we observe that results for G-ROM and grad-div-ROM are rather inaccurate, being the grad-div-ROM slightly better, while results for both DA-ROM (with and without grad-div term) almost approaches DNS results as for the one full period case. All these considerations are also reflected by the error levels displayed in Table \ref{tab:ErrLevCompInac11000}. These results suggest that DA reduced order methods perform well also for low values of viscosity and display low sensitivity compared to increases in Reynolds number, even when low-resolution data are available to construct the reduced basis. This fact is extremely important in order to solve complex realistic flows at high Reynolds numbers and also provides a numerical support to the theoretical analysis performed, in which error bounds with constants independent on inverse powers of the viscosity parameter are derived.

\begin{figure}[htb]
\begin{center}
\includegraphics[width=2.385in]{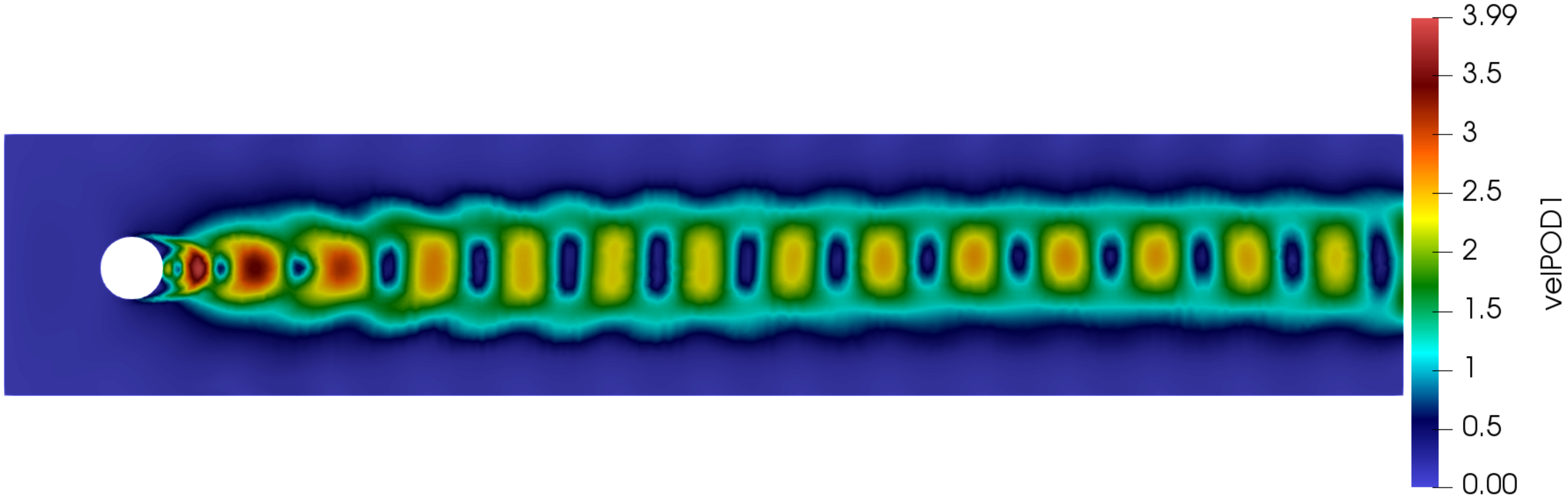}\hspace{-0.1cm}
\includegraphics[width=2.385in]{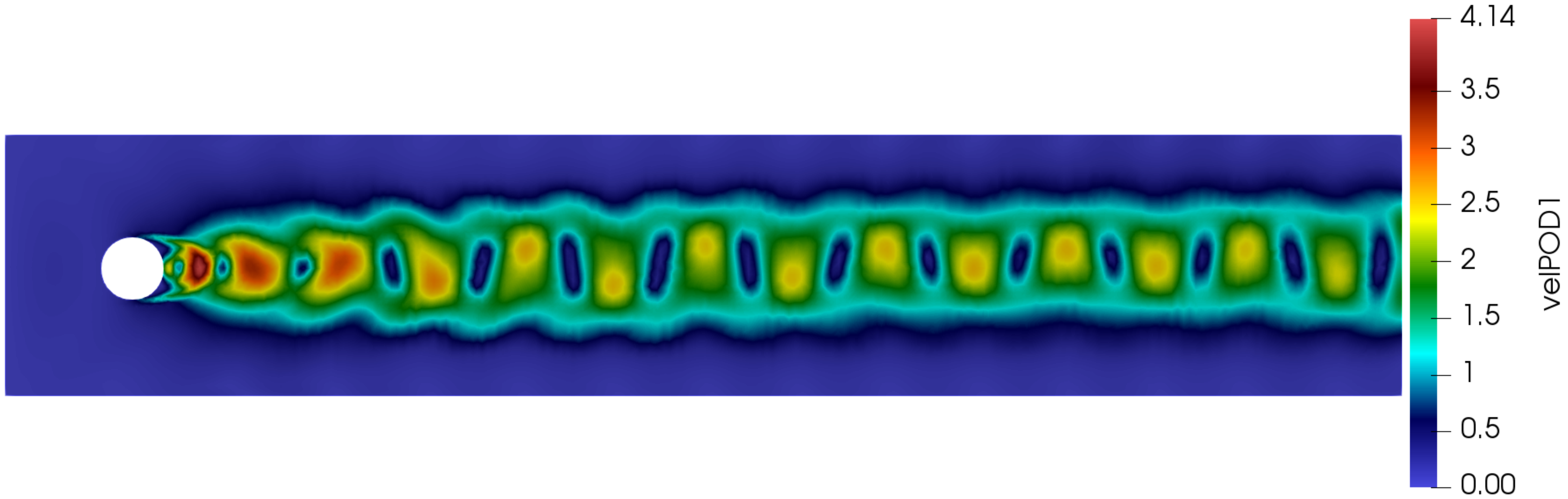}\\
\includegraphics[width=2.385in]{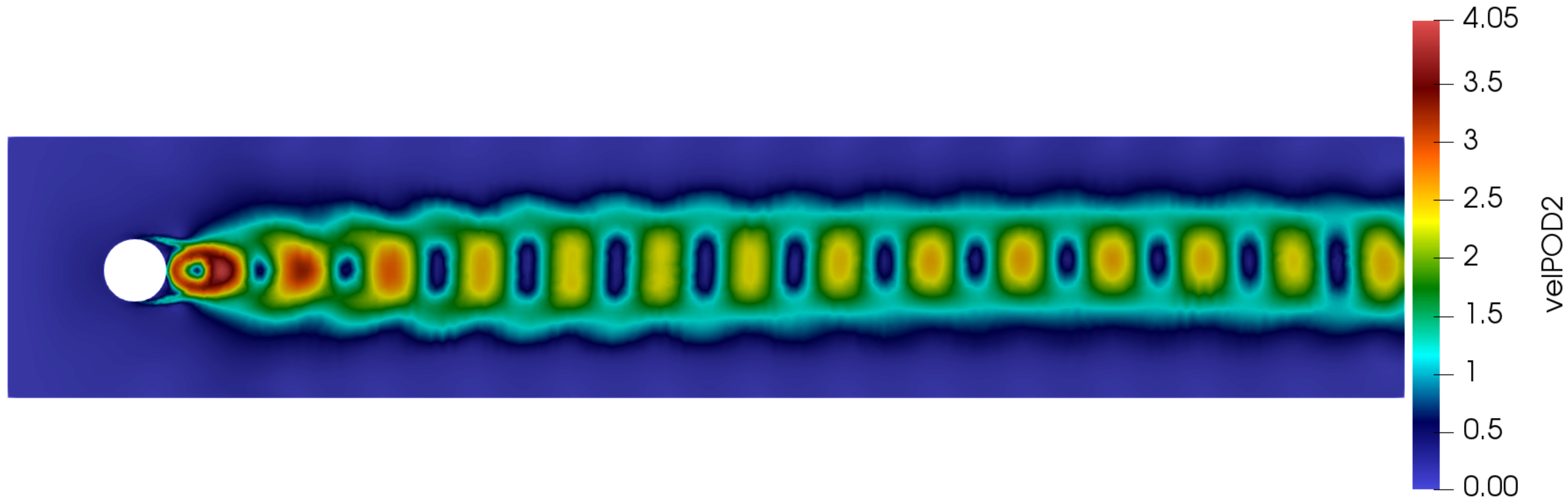}\hspace{-0.1cm}
\includegraphics[width=2.385in]{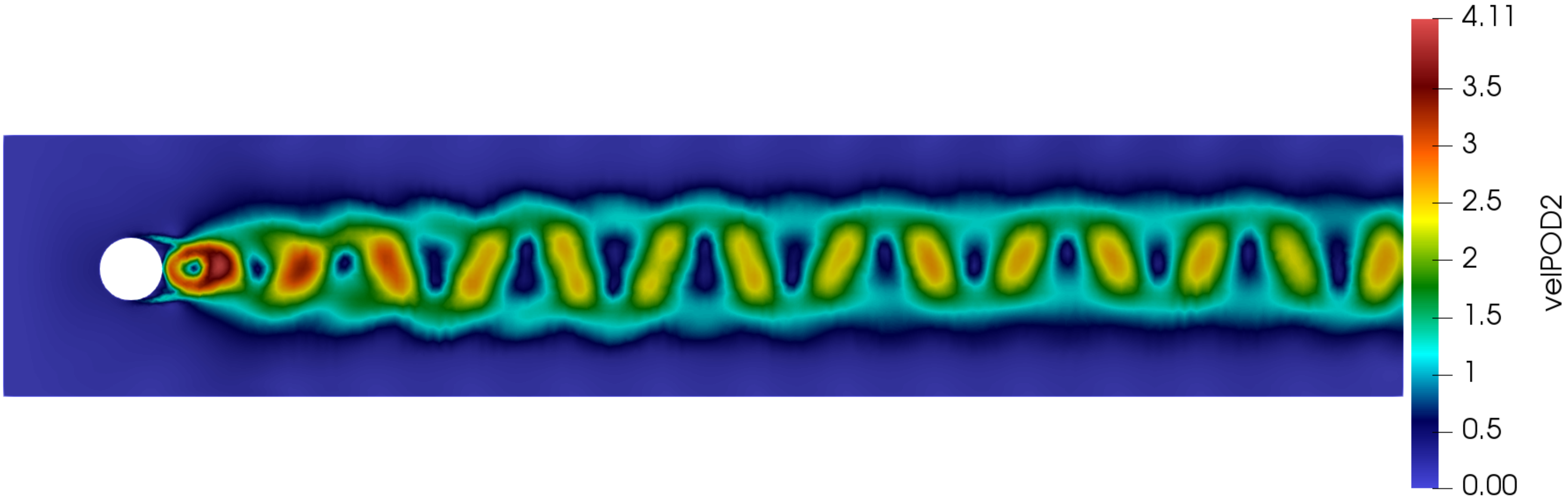}\\
\includegraphics[width=2.385in]{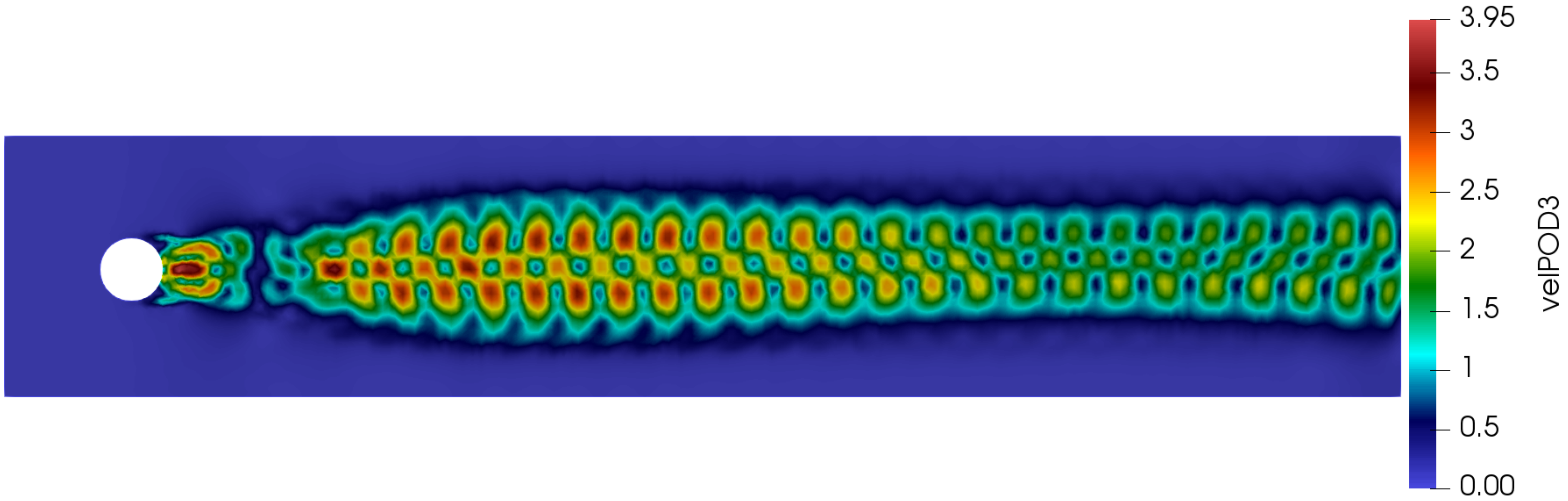}\hspace{-0.1cm}
\includegraphics[width=2.385in]{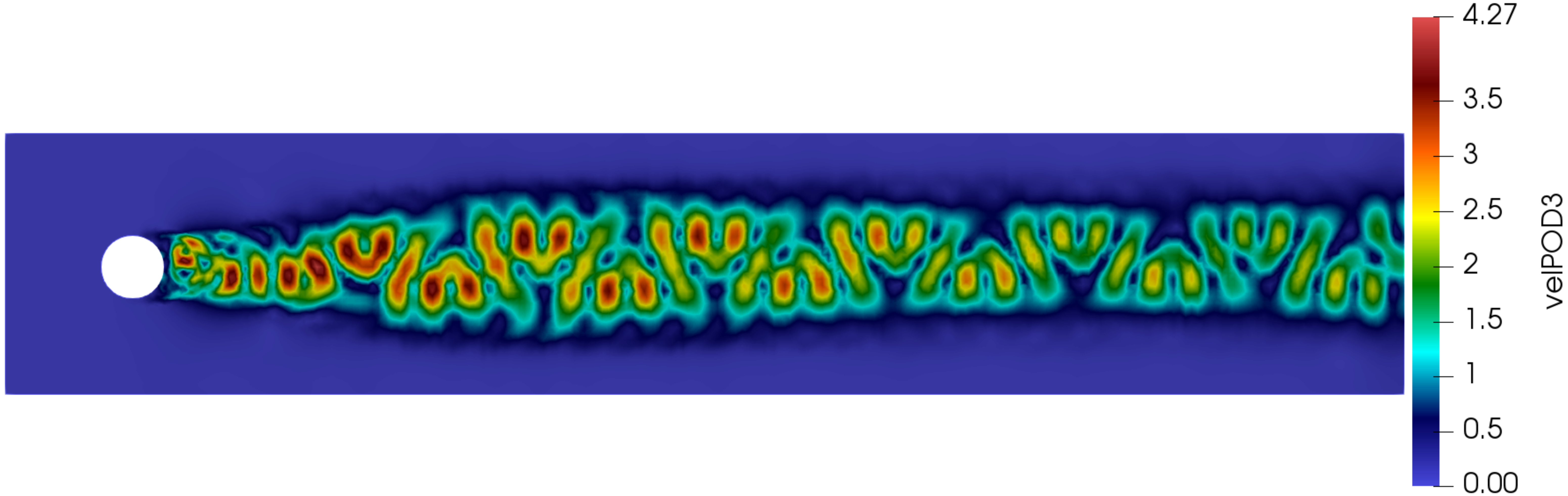}\\
\includegraphics[width=2.385in]{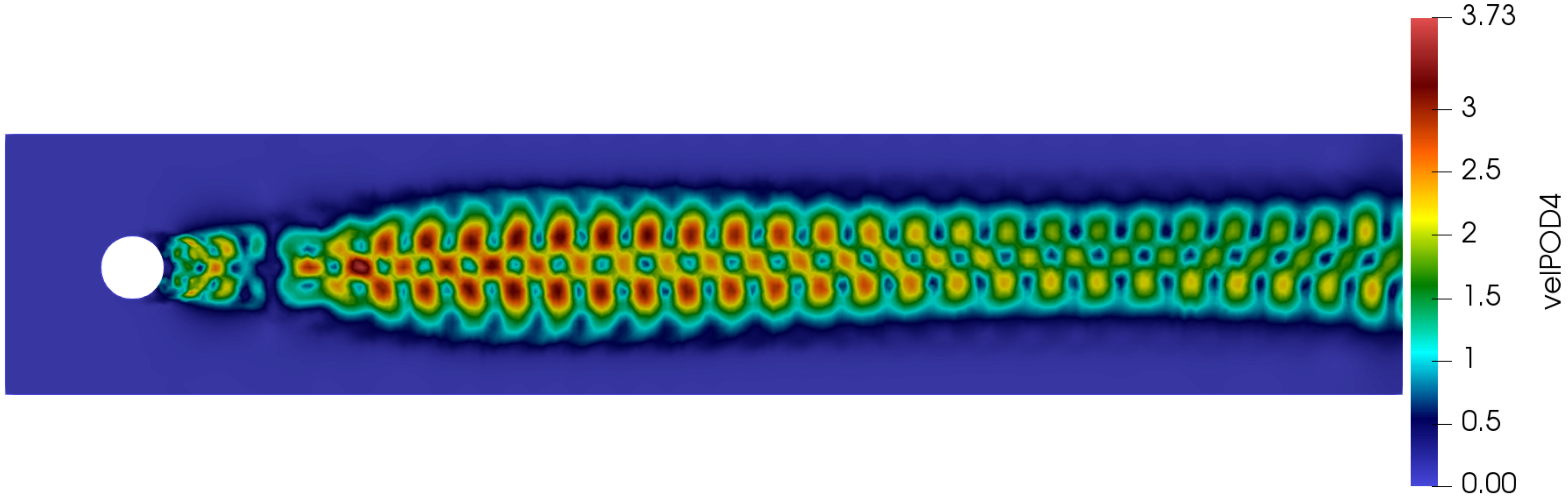}\hspace{-0.1cm}
\includegraphics[width=2.385in]{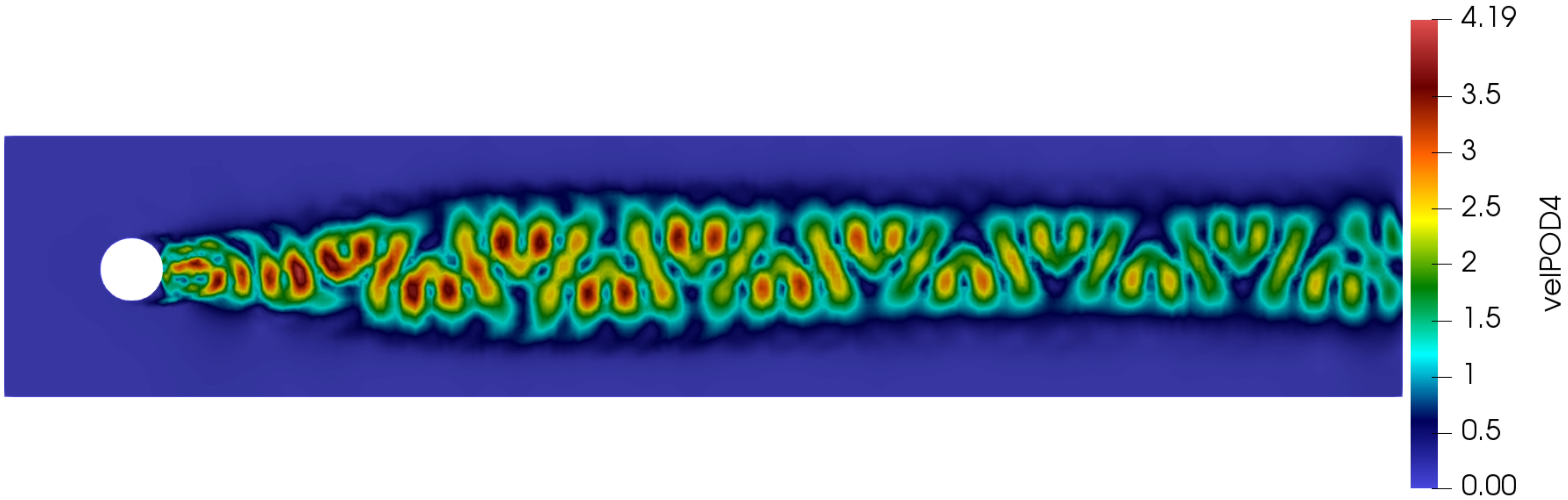}
\caption{Example \ref{sec:Re=1000} (Case $Re=1000$): First POD velocity modes (Euclidean norm) obtained with $110$ snapshots (full period basis, left) and $70$ snapshots (inaccurate basis corresponding to $64\%$ of one full period, right).}\label{fig:PODvelmodes1000}
\end{center}
\end{figure}

\begin{figure}[htb]
\begin{center}
\centerline{\includegraphics[width=5in]{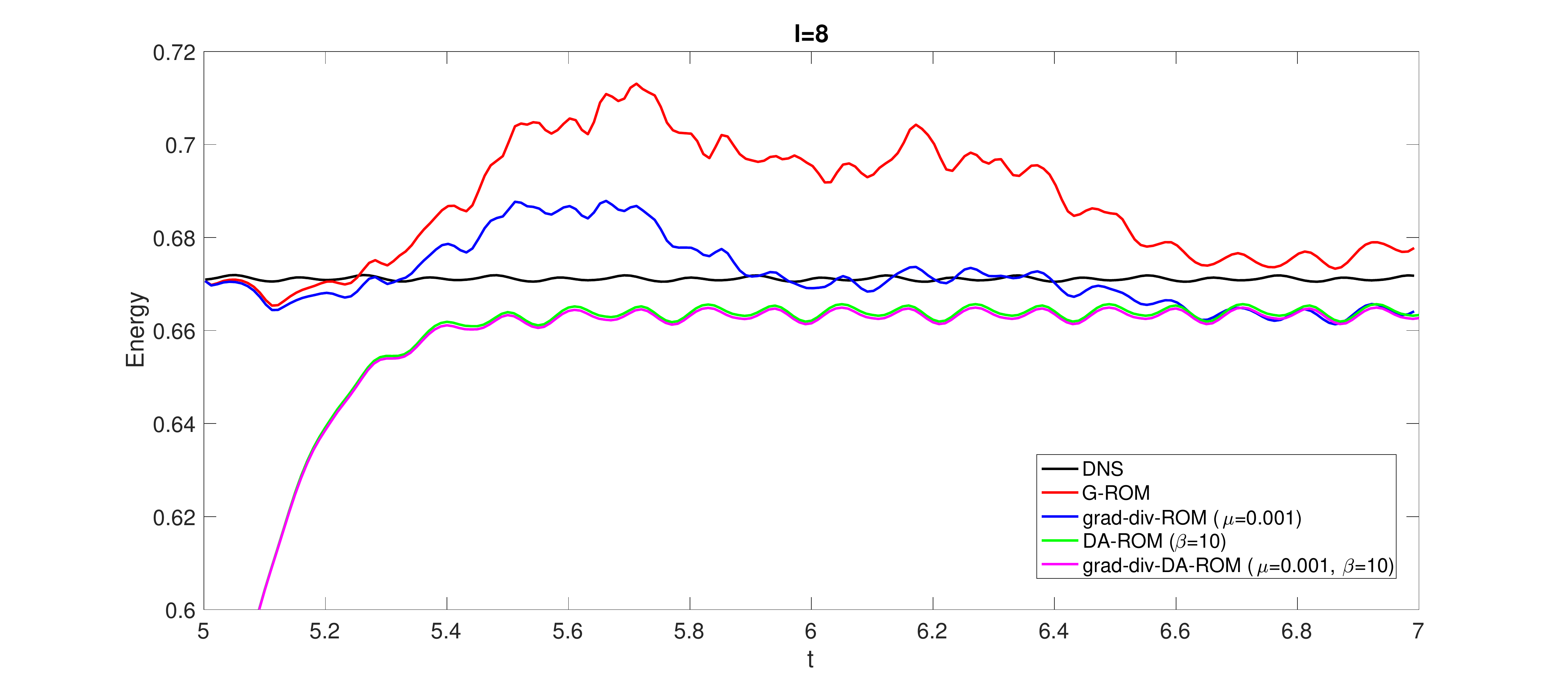}}
\centerline{\includegraphics[width=5in]{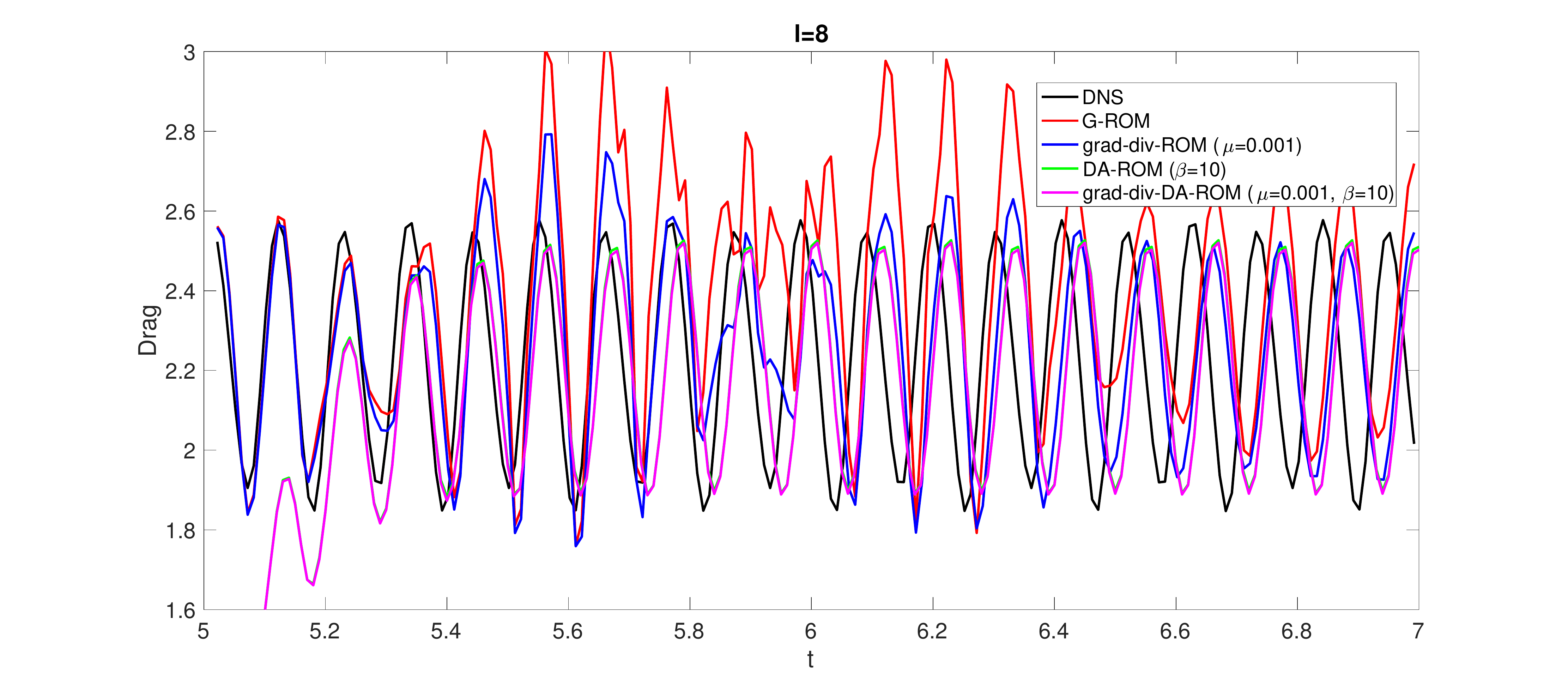}}
\centerline{\includegraphics[width=5in]{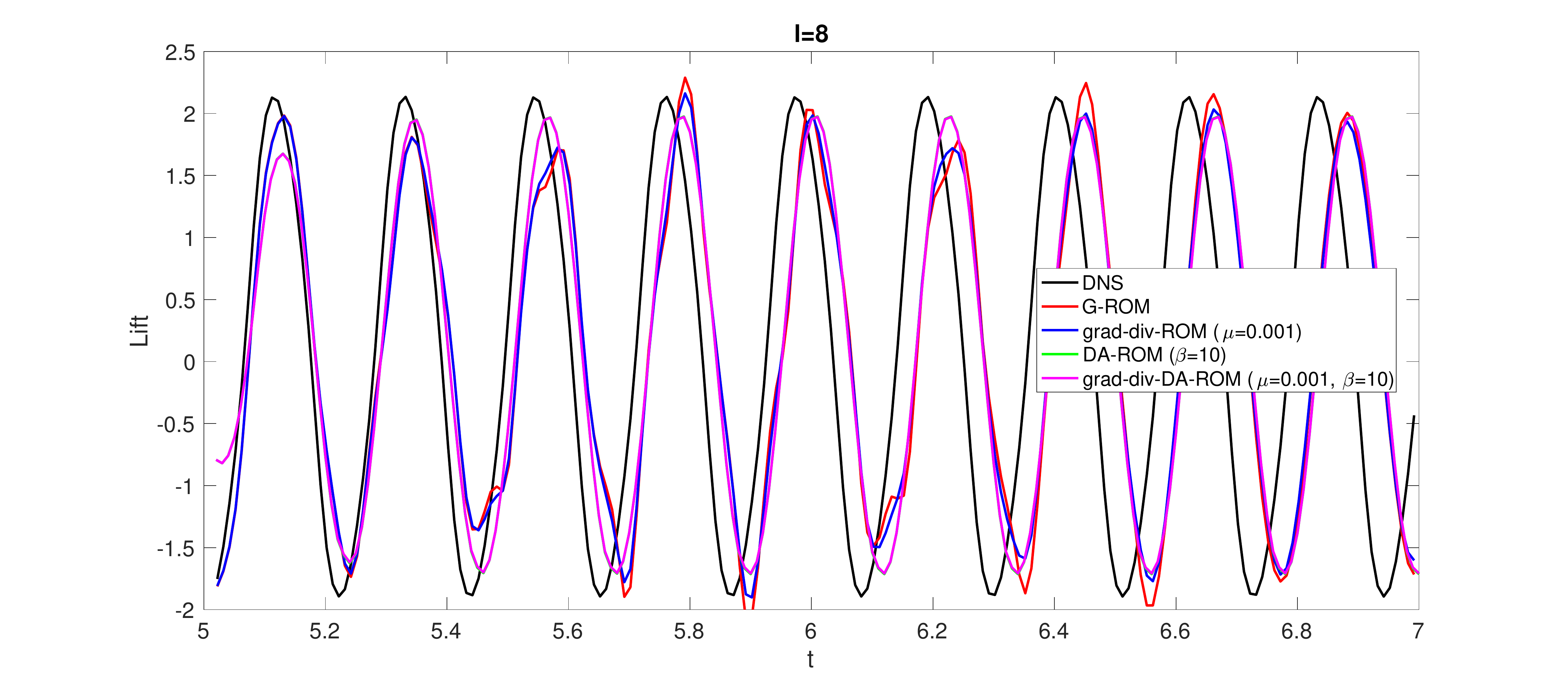}}
\caption{Example \ref{sec:Re=1000} (Case $Re=1000$): Temporal evolution of kinetic energy, drag coefficient and lift coefficient using $l=8$ modes ($70$ snapshots used, which comprise $64\%$ of one full period from $t=5\,\rm{s}$ to $t=5.14\,\rm{s}$).}\label{fig:QOIInac11000}
\end{center}
\end{figure}

\begin{figure}[htb]
\begin{center}
\centerline{\includegraphics[width=5in]{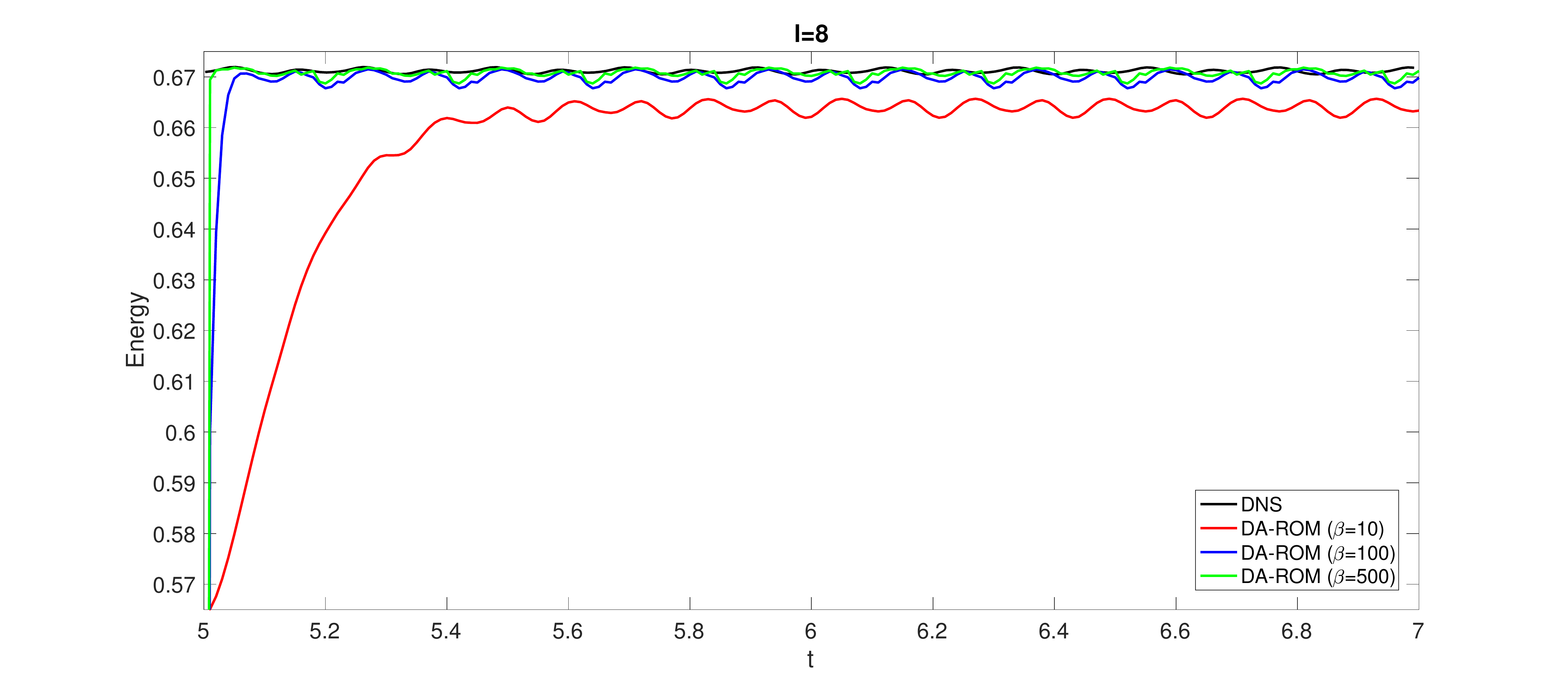}}
\centerline{\includegraphics[width=5in]{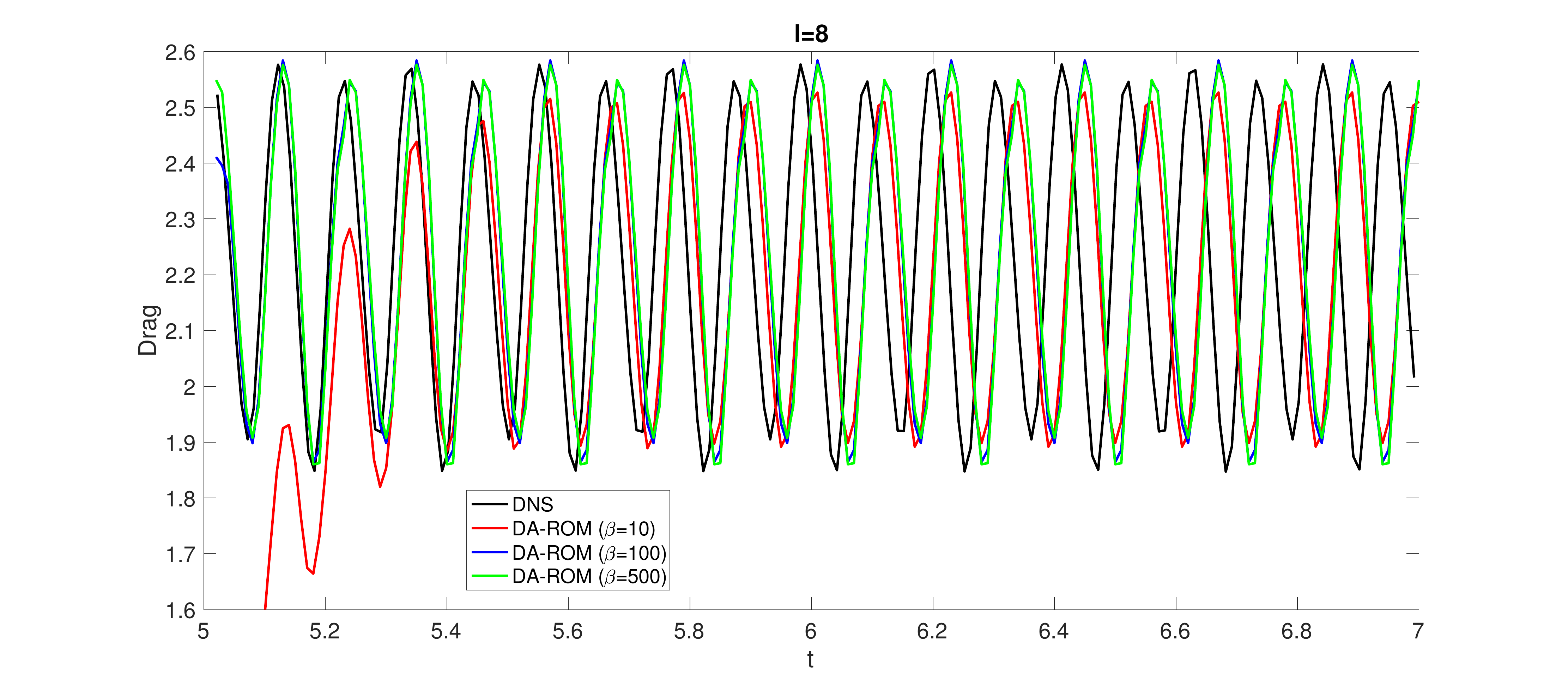}}
\centerline{\includegraphics[width=5in]{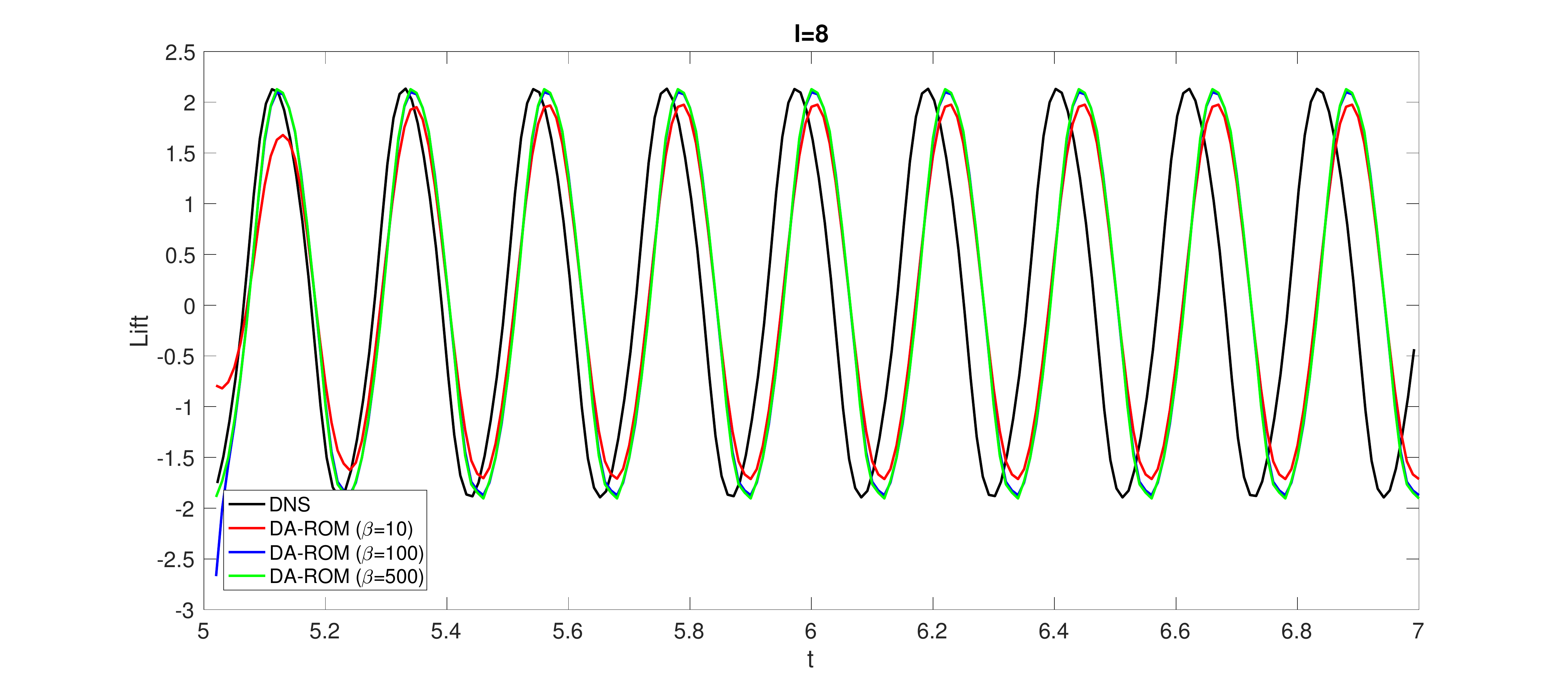}}
\caption{Example \ref{sec:Re=1000} (Case $Re=1000$): Temporal evolution of kinetic energy, drag coefficient and lift coefficient using $l=8$ modes for DA-ROM with $\beta=10,\,100,\,500$ ($70$ snapshots used, which comprise $64\%$ of one full period from $t=5\,\rm{s}$ to $t=5.14\,\rm{s}$).}\label{fig:QOIdaInac11000}
\end{center}
\end{figure}

\begin{figure}[htb]
\begin{center}
\centerline{\includegraphics[width=5in]{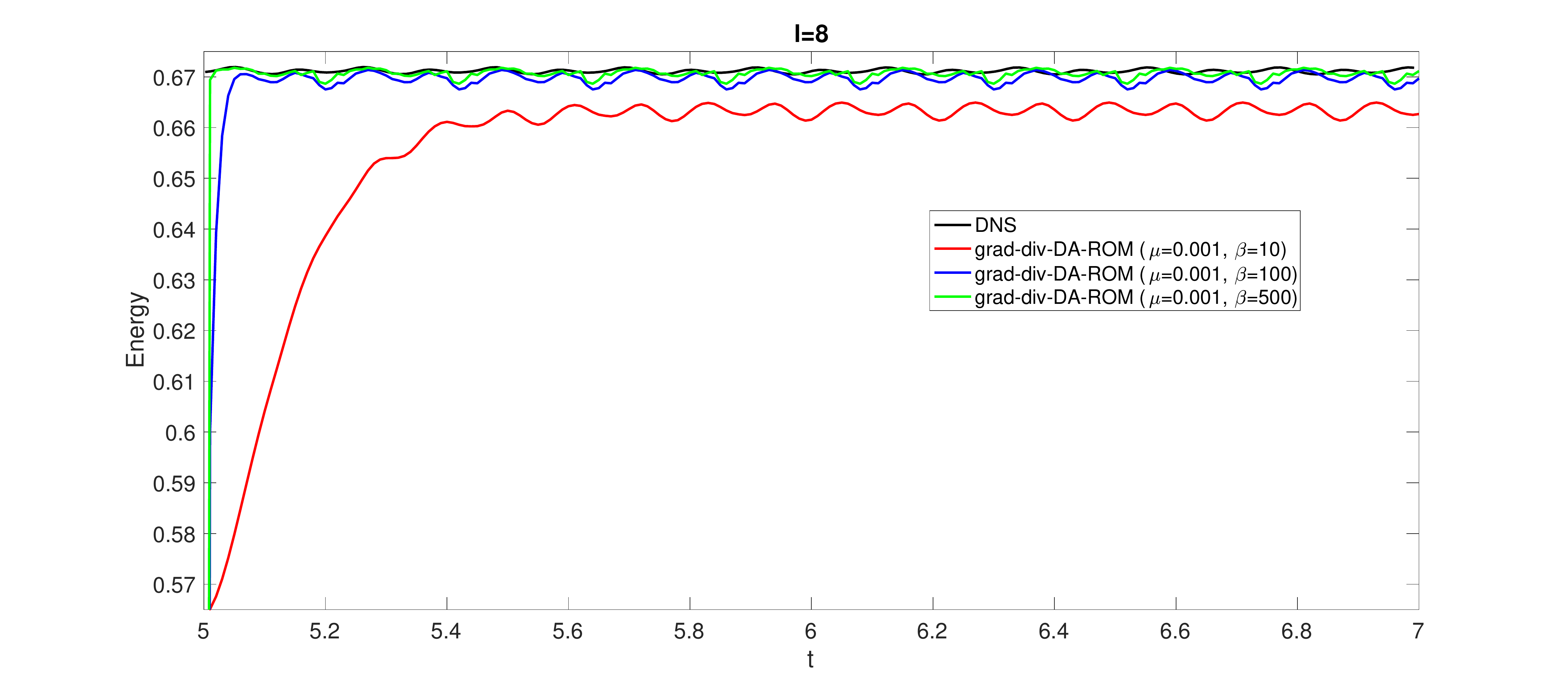}}
\centerline{\includegraphics[width=5in]{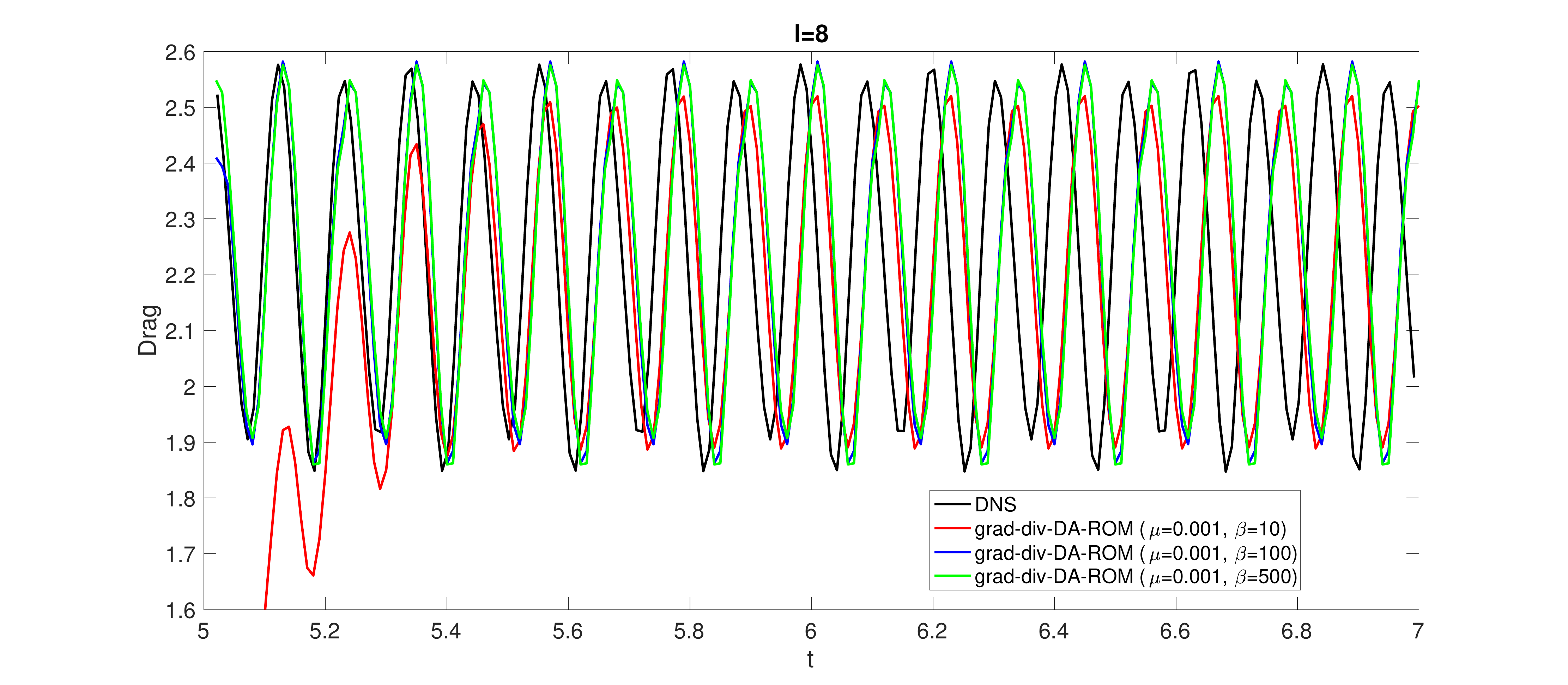}}
\centerline{\includegraphics[width=5in]{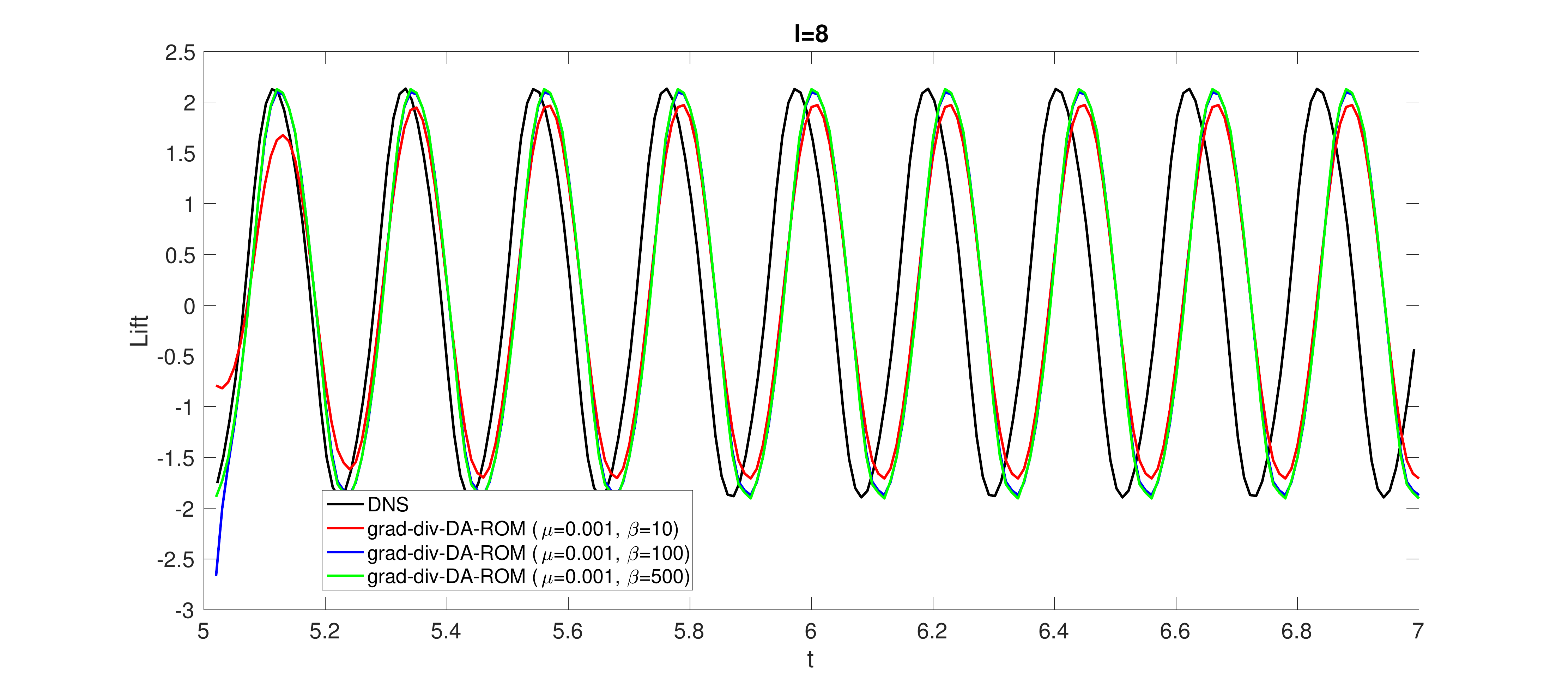}}
\caption{Example \ref{sec:Re=1000} (Case $Re=1000$): Temporal evolution of kinetic energy, drag coefficient and lift coefficient using $l=8$ modes for grad-div-DA-ROM with $\mu=0.001$ and $\beta=10,\,100,\,500$ ($70$ snapshots used, which comprise $64\%$ of one full period from $t=5\,\rm{s}$ to $t=5.14\,\rm{s}$).}\label{fig:QOIgdDAInac11000}
\end{center}
\end{figure}

\begin{table}[htb]
$$\hspace{-0.1cm}
\begin{tabular}{|c|c|c|c|c|}
\hline
 & \multicolumn{4}{|c|}{$Re=1000$ (Inaccurate snapshots)}\\
\hline
Errors & G-ROM & grad-div-ROM & DA-ROM & grad-div-DA-ROM\\
\hline
$E_{kin}^{max}$ & 4.11e-02 & 1.59e-02 & 1.20e-04 & 1.50e-04\\
\hline
$c_{D}^{max}$ & 4.93e-01 & 2.15e-01 & 9.00e-04 & 1.26e-03\\
\hline
$c_{L}^{max}$ & 1.55e-01 & 2.83e-02 & 5.29e-03 & 5.08e-03\\
\hline
$\ell^2({\bf L}^2)\, \uv$ norm & 3.62e-01 & 3.02e-01 & 6.79e-02 & 6.79e-02\\
\hline
\end{tabular}$$\caption{Example \ref{sec:Re=1000} (Case $Re=1000$): Errors levels with respect to DNS for G-ROM, grad-div-ROM ($\mu=0.001$), DA-ROM ($\beta=500$), and grad-div-DA-ROM ($\mu=0.001, \beta=500$) ($70$ snapshots used, which comprise $64\%$ of one full period from $t=5\,\rm{s}$ to $t=5.14\,\rm{s}$).}\label{tab:ErrLevCompInac11000}
\end{table}

%%%=====================================================
%%%=====================================================

\section{Conclusions}\label{sec:Concl}

In this paper, a new stabilized data assimilation reduced order method (grad-div-DA-ROM) for the numerical simulation of incompressible flows is proposed, analyzed and tested. The new grad-div-DA-ROM is a velocity nudging-based DA-ROM that incorporates a grad-div stabilization term.

The main contribution of the present paper is the numerical analysis of the fully discrete grad-div-DA-ROM applied to the unsteady incompressible NSE, where a rigorous error estimate is proved. This estimate takes into account the three sources of error: the spatial discretization error (due to the FE discretization), the temporal discretization error (due to the backward Euler method), and the POD truncation error.

With respect to a related approach \cite{zerfas_et_al} that, in a similar way, proposed, analyzed and tested a nudging-based DA-ROM (without grad-div) for incompressible flows, here we have obtained error bounds with constants independent on inverse powers of the viscosity parameter. Also, no upper bounds in the nudging parameter of the data assimilation method are required. Thus, in this respect, the present study can be seen as an improvement of the numerical analysis performed in \cite{zerfas_et_al}. 

Numerical experiments show that, for large values of the nudging parameter and a small number of POD modes, the new grad-div-DA-ROM converges to the true solution exponentially fast, and similarly to the DA-ROM in \cite{zerfas_et_al}, despite its simple implementation, it greatly improves the overall accuracy of the standard Galerkin POD-ROM (G-ROM) up to low viscosities over predictive time intervals. In the numerical experiments it can also be observed that the incorporation of the grad-div stabilization term in the ROM framework (grad-div-ROM, without DA) guarantees a significant improvement over G-ROM only for low Reynolds number. The numerical results suggest that DA reduced order methods display low sensitivity with respect to increase the Reynolds number, which is extremely important in order to solve complex realistic flows with low viscosities, and also provide a numerical support to the performed theoretical analysis, in which error bounds with constants independent on inverse powers of the viscosity parameter are derived.

\bibliographystyle{abbrv}
\bibliography{references,Biblio_STAB-POD-ROM_NSE}
\end{document}